\newcommand{\om}{M}
\newcommand{\vp}{\varphi}
\newcommand{\ve}{\varepsilon}
\newcommand{\ddbar}{\sqrt{-1} \partial \overline{\partial}}
\newcommand{\ol}{\overline}
\newcommand{\ri}{\rightarrow}
\begin{document}
\newcounter{theor}
\setcounter{theor}{1}
\newtheorem{claim}{Claim}
\newtheorem{theorem}{Theorem}[section]
\newtheorem{lemma}[theorem]{Lemma}
\newtheorem{corollary}[theorem]{Corollary}
\newtheorem{proposition}[theorem]{Proposition}
\newtheorem{prop}{Proposition}[section]
\newtheorem{question}{question}[section]
\newtheorem{defn}{Definition}[section]
\newtheorem{remark}{Remark}[section]

\numberwithin{equation}{section}

\title[deformed Hermitian-Yang-Mills equation]{The deformed Hermitian-Yang-Mills equation on almost Hermitian manifolds }
\author[L. Huang, J. Zhang and X. Zhang]{Liding Huang, Jiaogen Zhang and Xi Zhang}
\address{School of Mathematical Sciences, University of Science and Technology of China, Hefei 230026, P. R. China}
\email{huangld@mail.ustc.edu.cn}
\address{School of Mathematical Sciences, University of Science and Technology of China, Hefei 230026, P. R. China }
\email{zjgmath@mail.ustc.edu.cn}
\address{School of Mathematical Sciences, University of Science and Technology of China, Hefei 230026, P. R. China }
\email{mathzx@ustc.edu.cn}
\thanks {The authors are partially supported by NSF in China No.11625106,
11571332 and 11721101. The research was partially supported by the project ``Analysis and Geometry on Bundle" of Ministry of Science and Technology of the People's Republic of China, No.SQ2020YFA070080.}
\subjclass[2010]{53C07, 32Q60, 30C80, 35B45.}
\keywords{deformed Hermitian-Yang-Mills equation, almost Hermitian manifold, maximal principle, a priori estimates.}

\begin{abstract}
In this paper, we consider the deformed Hermitian-Yang-Mills equation on closed almost Hermitian manifolds. In the case of hypercritical phase,  we derive a priori estimates under the existence of an admissible $\mathcal{C}$-subsolution. As an application, we prove the existence of  solutions for the deformed Hermitian-Yang-Mills equation under the condition of existence of a supersolution. 
\end{abstract}
\maketitle

\section{Introduction}
 Motivated by Mirror Symmetry and Mathematical Physics, on a K\"{a}hler manifold $(M,\chi)$ of complex dimension $\dim_{\mathbb{C}}M=n$, the deformed Hermitian-Yang-Mills equation,  which has been studied extensively, can be written as the following form:
\begin{equation}\label{DHYM 1}
\textrm{Im}(\chi+\sqrt{-1}\omega_{u})^{n}=\tan(\hat{\theta})\textrm{Re}(\chi+\sqrt{-1}\omega_{u})^{n}
\end{equation}
where $\hat{\theta}$ is a constant and $\omega$ is a smooth real $(1,1)$-form,  $\omega_{u}=\omega+\ddbar{u}$. Assume that $(\lambda_{1}(u),\lambda_{2}(u), \cdots,\lambda_{n}(u))$ are the eigenvalues of $\omega_{u}$ with respect to $\chi.$ Without confusion, we also denote $\lambda_{i}(u)$ by $\lambda_{i}$, $i=1,\cdots, n.$ The equation (\ref{DHYM 1}) can be rewritten as
\begin{equation*}
\sum_{i}\arctan\lambda_{i}=\hat{\theta}.
\end{equation*}
By solving this equation, we can find a Hermitian metric on the line bundle over $M$ such that the argument of Chern curvature is constant \cite{JY}.

For the dimension $n=2$, one only need to solve a Monge-Amp\`ere equation \cite{JY}. In addition, Jacob-Yau \cite{JY}  used a parabolic flow to prove the existence of solution when $(M,\chi)$ has  non-negative othogonal bisectional curvature and $\hat{\theta}$ satisfies the hypercritical phase condition, i.e., $n\frac{\pi}{2}> \hat{\theta}>(n-1)\frac{\pi}{2}$ for general dimensions. V. Pingali \cite{PV,PV1} proved the existence of solution when $n=3$. 
 Collins-Jacob-Yau \cite{CJY} gave the existence theorem of \eqref{DHYM 1}  under the condition of existence of subsolution in general dimensions. R. Takahashi \cite{TR}  introduced the tangent Lagrangian phase flow and used it to proved the existence of solution of the deformed-Hermitian-Yang-Mills equation, assuming the existence
 of a $\mathcal{C}$-subsolution.  For more details, we refer to \cite{CG19,CG20,CCL20,CY,CS20,CXY, DP} and the references therein.

In this paper, we give a priori estimates on an almost Hermitian manifold $(M, \chi, J)$ with real dimension $2n$ for the deformed Hermitian-Yang-Mills equation. 
Consider the following general equation
\begin{equation}\label{DHYM}
F(\omega_{u})=f(\lambda_{1},\cdots, \lambda_{n})=\sum_{i}\arctan\lambda_{i}=h.
\end{equation}
where $h:M\rightarrow ((n-1)\frac{\pi}{2}, n\frac{\pi}{2})$ is a given function on $M$ and $(\lambda_{1}, \cdots,\lambda_{n}) $ are the eigenvalues of $\omega_{u}$ with respect to $\chi$. 

We now state our main result.
\begin{theorem}\label{main theorem}
	Assume $h:M\rightarrow ((n-1)\frac{\pi}{2}, \frac{n\pi}{2})$ is a smooth function and $\underline{u}:M\rightarrow \mathbb{R}$ is a smooth $\mathcal{C}$-subsolution (Definition \ref{sub}). Suppose $\omega$ is a smooth  real $(1, 1)$-form. Let $u$ be a solution of \eqref{DHYM}, for each $0<\beta<1$, then we have
	\begin{equation}
	\| u\|_{C^{k,\beta}}\leq C,
	\end{equation}
	where $C$ depends on $\|h\|_{C^{k+1}(M)}$, $\inf_{M} h$, $\underline{u}$, $k$, $\beta$, $(M,\chi, J)$ and $\omega$.
\end{theorem}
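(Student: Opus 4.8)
The plan is to establish the estimate in the order standard for fully nonlinear equations of this type: a $C^0$ bound, then a second-order estimate bounding $|\partial\bar\partial u|$ by the gradient, then a $C^1$ bound by a blow-up argument, and finally $C^{2,\beta}$ and higher regularity by Evans--Krylov and Schauder theory. Throughout one uses that on the hypercritical range $((n-1)\frac{\pi}{2},\frac{n\pi}{2})$ of $h$ the level sets of $f(\lambda)=\sum_i\arctan\lambda_i$ are convex, so the operator $F$ is concave and elliptic in $\omega_u$, and the structural properties of Sz\'ekelyhidi's framework for $\mathcal C$-subsolutions apply to $f$. Normalize so that $\sup_M(u-\underline u)=0$.

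\textbf{$C^0$ estimate.} Following the argument of Sz\'ekelyhidi and of Collins--Jacob--Yau, apply the Alexandrov--Bakelman--Pucci maximum principle on a small coordinate ball around a point where $u-\underline u$ is close to its infimum, using that $\underline u$ is a $\mathcal C$-subsolution to obtain a definite nondegeneracy on the relevant set of Hessians; this yields $\|u\|_{C^0}\le C$. The almost complex structure enters only through bounded background terms and causes no essential change here.

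\textbf{Second-order estimate (the main difficulty).} Apply the maximum principle to a test quantity of the form $W=\log\lambda_1(\omega_u)+\varphi(|\partial u|_\chi^2)+\psi(u)$, where $\lambda_1$ is the largest eigenvalue of $\omega_u$ relative to $\chi$ (regularized by a small perturbation when the top eigenvalue is not simple), $\varphi$ is a suitable concave function and $\psi$ a large convex one. Differentiating $F(\omega_u)=h$ twice, concavity of $F$ absorbs the dangerous third-order terms of the type $\sum F^{i\bar i}|\nabla_i\lambda_1|^2/\lambda_1$; the contribution of $\varphi(|\partial u|^2)$ controls the remaining gradient-cubic terms; and the subsolution supplies a uniformly positive quantity $\sum_i F^{i\bar i}(\underline\lambda_i-\lambda_i)$ on the ``large'' directions needed to close the estimate. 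The new feature on an almost Hermitian manifold is a collection of torsion terms: commutators of covariant derivatives of the canonical (Chern) connection produce curvature of $\chi$ and torsion of $J$, and, since $\omega_u$ records only the $J$-invariant part of the complex Hessian of $u$, differentiating it generates terms of schematic form $(\text{torsion})\cdot\nabla^2 u$ and $(\text{torsion})\cdot\nabla^2 u\cdot\nabla u$, together with $(2,0)+(0,2)$-Hessian terms $u_{ij}$; these are handled as in Chu--Tosatti--Weinkove by absorbing them against the ellipticity $\sum_i F^{i\bar i}$ and the gradient term, at the cost of enlarging $\varphi$ and $\psi$. One concludes $\sup_M|\partial\bar\partial u|\le C\bigl(1+\sup_M|\nabla u|^2\bigr)$, which together with $\omega_u>0$ controls all real second derivatives of $u$ in terms of $\sup_M|\nabla u|^2$.

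\textbf{Gradient estimate and higher regularity.} If $\sup_M|\nabla u|$ were not a priori bounded, one rescales $u$ near a point where $|\nabla u|$ is large; by the previous step the rescaled functions have uniformly bounded Hessians, so a subsequence converges in $C^{1,\alpha}_{\mathrm{loc}}$ to an entire solution on $\mathbb C^n$ of a limiting equation (the level set being a limit of $\{f=h\}$) with $|\nabla u|(0)=1$, contradicting the associated Liouville theorem; hence $\|u\|_{C^1}\le C$, and with the second-order estimate $\|u\|_{C^2}\le C$. The equation is then uniformly elliptic and concave in $\partial\bar\partial u$, so the Evans--Krylov theorem adapted to the almost complex setting (or passage to the underlying real uniformly elliptic concave equation) gives $\|u\|_{C^{2,\beta}}\le C$ for some $\beta$, and differentiating the equation and iterating Schauder estimates promotes this to $\|u\|_{C^{k,\beta}}\le C$ for every $k$ and $0<\beta<1$, with constants depending only on the listed data. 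I expect the second-order estimate, in particular the bookkeeping of the torsion-coupled third- and second-order terms and the eigenvalue regularization, to be the main obstacle.
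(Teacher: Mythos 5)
Your plan takes a genuinely different route from the paper in two places, and one of them contains a gap that is precisely the delicate point of the almost-Hermitian setting.

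First, the ordering and the gradient bound. The paper proves $C^0 \to C^1 \to C^2$, obtaining the gradient estimate in Section~4 by a direct maximum principle applied to $e^{A e^{B_1\eta}}\,|\nabla u|^2$ with $\eta=\underline{u}-u-\inf_M(\underline{u}-u)$, using the $\mathcal C$-subsolution dichotomy of Corollary~\ref{Cor2.6}, and only then proves the second-order bound. You instead propose $C^0 \to C^2$ (in terms of $\sup_M|\nabla u|^2$) $\to C^1$ by blow-up. That ordering is not wrong in principle (it mirrors one of the Chu--Tosatti--Weinkove strategies), but it requires a Liouville-type theorem for the limiting entire solution of the Lagrangian phase operator on $\mathbb C^n$, which you do not state or justify. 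The paper sidesteps this entirely by the direct gradient estimate, which is both more elementary and self-contained; if you want the blow-up route, the Liouville theorem is a genuine additional input.

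Second, and more seriously, the choice of test quantity for the second-order estimate. You propose $\log\lambda_1(\omega_u)$, the largest eigenvalue of the complex $(1,1)$-form $\omega_u$ relative to $\chi$, and then assert that a bound on $|\partial\bar\partial u|$ together with $\omega_u>0$ controls all real second derivatives. On a Hermitian (integrable) manifold this is true because the $(2,0)+(0,2)$ part of the complex Hessian is built from Christoffel symbols acting on $\nabla u$ and hence is first order. On an almost complex manifold this fails: the components $u_{ij}=e_ie_ju-(\nabla_{e_i}e_j)u$ are genuinely second-order and are \emph{not} controlled by $\partial\bar\partial u$ and $|\nabla u|$. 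This is exactly why the paper (following Chu--Tosatti--Weinkove) applies the maximum principle to $\log\mu_1(\nabla^2 u)$, the top eigenvalue of the full \emph{real} Hessian, so that the quantity being maximized already dominates the $(2,0)+(0,2)$ pieces; the inequality $|\nabla^2 u|_g\le C\mu_1(\nabla^2 u)+C$ is then obtained via the positivity $\sum_i\lambda_i\geq 0$ coming from the hypercritical phase. In your setup the bad terms $u_{ij}$ appear in the differentiated equation at second order and cannot be absorbed against the ellipticity $\sum_i F^{i\bar i}$ or the gradient term; you would need the test function itself to dominate them. So you should replace $\lambda_1(\omega_u)$ by $\mu_1(\nabla^2 u)$ (and regularize the top real-Hessian eigenvalue, as the paper does via the perturbed endomorphism $\Phi$). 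With that replacement, your second-order scheme is essentially the paper's Section~5: concavity of $f$ supplies the good third-order term $-F^{i\bar k,j\bar l}V_1(\tilde g_{i\bar k})V_1(\tilde g_{j\bar l})$, the function $\phi(|\nabla u|^2)$ with $\phi''=2(\phi')^2$ closes the gradient-cubic terms, and the subsolution dichotomy (either $L(\underline u-u)\geq\theta\sum F^{i\bar i}$ or $F^{k\bar k}\geq\theta\sum F^{i\bar i}$ for all $k$) handles the final case analysis. One further structural ingredient you omit but the paper uses crucially is that $\inf_M h>(n-1)\frac{\pi}{2}$ forces $\lambda_i\geq\lambda_n\geq C_1^{-1}>0$ and $\lambda_j\lambda_n\geq 1$; these lower bounds are what make the concavity term quantitatively useful in Lemma~\ref{lower bound of concave}.

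The $C^0$ step and the bootstrapping to $C^{k,\beta}$ via the almost-complex $C^{2,\alpha}$ estimate and Schauder theory are in agreement with the paper.
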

\bigskip

Almost Hermitian manifolds have been studied extensively motivated by differential geometry and mathematical physics (see \cite{DeT06,GSVY90,HL15,NH03,ST12} and references therein).
The theory of fully nonlinear elliptic equations were developed, such as \cite{CTW16, CHZ17, ZJ}, etc.
On the other hand, the geometry of complex vector bundles over almost Hermitian manifolds were researched in \cite{BT96, WZ, ZX}  and references therein.
Note that Lau-Tseng-Yau \cite{LTY} studied SYZ mirror symmetry in the context of non-K\"ahler Calabi-Yau manifolds. Generalized complex geometry, proposed by Hitchin \cite{HL15}, is closed related to flux compactifications in string theory \cite{GM}.  In fact, the deformed Hermitian-Yang-Mills equation plays an important role in mirror symmetry and  string theory \cite{CXY}.  It is natural to
consider the equation \eqref{DHYM} to non-K\"{a}hler case. Motivated by these works, we prove this result.


In order to prove the theorem above, we will use the maximum principle. A crucial ingredient of the proof need the equation \eqref{DHYM} is concave. To prove that the equation is  concave, we need to assume $h$ satisfies the hypercritical phase condition, i.e., $n\frac{\pi}{2}> \hat{\theta}>(n-1)\frac{\pi}{2}$. In addition, to use the Proposition \ref{prop subsolution} provided by $\mathcal{C}$-subsolution, the equation have to satisfy some properties in Lemma \ref{lemma subsolution}. We can obtain these properties when $h$ satisfies the hypercritical phase condition.



In the proof of the second order estimates, we apply the maximum  principle. We will use the argument of \cite{CTW16} and \cite{ZJ}.
To deal with the bad third order terms, we need to give a lower bound  for the  third order terms from the concavity of the equation (see Lemma \ref{lower bound of concave}). 
In the proof, we give a positive lower  bound for the complex eigenvalues $\omega_{u}$ provided by $\inf_{M} h>\frac{(n-1)\pi}{2}$.


When $h\in ((n-1)\frac{\pi}{2}, n\frac{\pi}{2})$ is a constant, we can  prove the existence under the condition of existence of supersolution.
\begin{theorem}\label{constant angle theorem}
	Assume there exists a supersolution $\hat{u}$ of \eqref{DHYM}, i.e. $F(\omega_{\hat{u}})\leq h$. Suppose $F(\omega_{\hat{u}})>\frac{(n-1)\pi}{2}$ and $h\in (\frac{(n-1)\pi}{2}, \frac{n\pi}{2})$ is a constant. 	If there exists a  $\mathcal{C}$-subsolution $\underline{u}$ for \eqref{DHYM}, we have a function $u$ and a constant $c$ such that
		\begin{equation*}
	\sum_{i}\arctan \lambda_{i}=h+c,
	\end{equation*}
	where $	h+c>\frac{n-1}{2}\pi$.
\end{theorem}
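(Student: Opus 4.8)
The plan is to use the continuity method. Let me set up a family of equations interpolating between something we can solve and the equation of interest. Define, for $t \in [0,1]$, the equation
\begin{equation*}
F(\omega_{u_t}) = \sum_i \arctan\lambda_i(u_t) = (1-t)\,F(\omega_{\hat u}) + t\,h + c_t,
\end{equation*}
where $c_t$ is an unknown constant to be solved for simultaneously with $u_t$ (normalized, say, by $\sup_M u_t = 0$ or $\int_M u_t = 0$). At $t=0$, $u_0 = \hat u$, $c_0 = 0$ is a solution. The set $S$ of $t$ for which a solution $(u_t, c_t)$ exists is nonempty; I want to show it is both open and closed in $[0,1]$, so that $t=1$ is attained, giving the desired $u = u_1$ and $c = c_1$.

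For \textbf{openness}, the key point is that the linearized operator of $F$ at a solution is elliptic (the eigenvalues of $\partial f/\partial \lambda_i = 1/(1+\lambda_i^2)$ are positive), and because we allow the constant $c_t$ to vary, the linearized operator acting on $(\dot u, \dot c) \mapsto L\dot u - \dot c$ is surjective onto $C^{k,\beta}$ with one-dimensional kernel coming from adding constants to $u$ — which is killed by the normalization. So the implicit function theorem in Hölder spaces gives openness, provided we know a priori that the right-hand side stays in the admissible range $((n-1)\frac\pi2, \frac{n\pi}{2})$; this is where the hypothesis $F(\omega_{\hat u}) > \frac{(n-1)\pi}{2}$ enters, since the right-hand side $(1-t)F(\omega_{\hat u}) + th + c_t$ must be shown to exceed $\frac{(n-1)\pi}{2}$ along the path — I would first establish a uniform lower bound on $c_t$ via the maximum principle (at a maximum point of $u_t$, $\omega_{u_t} \le \omega$ in the sense of comparing eigenvalues suitably, forcing a one-sided bound on the phase, hence on $c_t$), and similarly a uniform upper bound, keeping the phase hypercritical throughout.

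For \textbf{closedness}, I need uniform a priori estimates $\|u_t\|_{C^{k,\beta}} \le C$ independent of $t$. This is exactly what Theorem \ref{main theorem} provides: each $u_t$ solves \eqref{DHYM} with $h$ replaced by the smooth function $h_t := (1-t)F(\omega_{\hat u}) + th + c_t$, which is bounded in $C^{k+1}$ uniformly in $t$ (since $\hat u$ is fixed and $c_t$ is a bounded constant) and whose infimum stays strictly above $\frac{(n-1)\pi}{2}$ by the bound on $c_t$ above. One still must check that $\underline u$ remains a $\mathcal{C}$-subsolution for each $h_t$: because $\underline u$ is a $\mathcal{C}$-subsolution for $h$ and the $\mathcal{C}$-subsolution condition is an open condition on the target level that depends only on $\inf_M h_t$ and the structure of $\Gamma$, and since $h_t \ge \min\{F(\omega_{\hat u}), \inf_M h\} + c_t > \frac{(n-1)\pi}{2}$, the subsolution property is inherited (possibly after noting that raising the phase only makes the subsolution condition easier in the hypercritical regime). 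Then Theorem \ref{main theorem} yields the uniform $C^{k,\beta}$ bound, Arzelà–Ascoli gives a convergent subsequence, and the limit solves the equation at the limiting $t$; hence $S$ is closed.

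The \textbf{main obstacle} I anticipate is controlling the constant $c_t$: both bounding it (to keep the phase hypercritical and to apply the a priori estimates) and ensuring the normalization of $u_t$ interacts correctly with the maximum principle. The cleanest route is: normalize by $\sup_M u_t = 0$; then the $C^0$ estimate from the $\mathcal{C}$-subsolution machinery (Proposition \ref{prop subsolution}, via the properties in Lemma \ref{lemma subsolution}) bounds $\inf_M u_t$; evaluating the equation at the max and min points of $u_t - \underline u$ and comparing eigenvalues pins $c_t$ into a compact subinterval of $(\frac{(n-1)\pi}{2} - \sup h,\ \frac{n\pi}{2} - \inf h)$. Once $c_t$ is controlled, everything else reduces to invoking Theorem \ref{main theorem} and the standard continuity-method bookkeeping.
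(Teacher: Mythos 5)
Your overall strategy — a one–parameter family $(1-t)F(\omega_{\hat u}) + th + c_t$ solved by the continuity method, with openness from the implicit function theorem and closedness from the a~priori estimates of Theorem~\ref{main theorem} — is exactly the paper's. But the two steps where you are most vague are precisely the two places the proof actually has content, and your sketches of them go in the wrong direction.

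First, the bound on $c_t$. You propose to compare at extrema of $u_t$ alone, or of $u_t-\underline u$. Neither works cleanly. The paper compares at the extrema of $u_t-\hat u$, that is, against the \emph{supersolution}. At a maximum $q$ of $u_t-\hat u$ one has $\ddbar(u_t-\hat u)(q)\le 0$, hence $F(\omega_{u_t})(q)\le F(\omega_{\hat u})(q)=\theta_0(q)$; combined with the equation this gives $c_t\le t(\theta_0(q)-h)\le 0$, where the last step is exactly the supersolution hypothesis $\theta_0\le h$. Similarly a minimum of $u_t-\hat u$ gives the lower bound $c_t\ge -t\sup_M(h-\theta_0)$. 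Comparing against $\underline u$ instead gives $F(\omega_{u_t})\le F(\omega_{\underline u})$ at a max of $u_t-\underline u$, but $F(\omega_{\underline u})$ has no simple relation to $h$ for a $\mathcal{C}$-subsolution, so that route stalls.

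Second, and more serious: you assert that ``raising the phase only makes the subsolution condition easier in the hypercritical regime.'' This is backwards. By Lemma~\ref{subsolution} the $\mathcal{C}$-subsolution condition is $\sum_{i\neq j}\arctan\lambda_i(\underline u) > h - \frac{\pi}{2}$ for all $j$; increasing $h$ makes this \emph{harder}. What is needed is an \emph{upper} bound on the time-$t$ right-hand side $h_t := (1-t)\theta_0 + th + c_t$, namely $h_t\le h$, which follows from $c_t\le 0$ together with $\theta_0\le h$. Your argument instead tries to bound $h_t$ from below, which is what keeps the phase hypercritical (the paper gets this from $c_t\ge -t\sup_M(h-\theta_0)$ and the constancy of $h$, giving $\inf_M h_t \ge \inf_M\theta_0 > \frac{(n-1)\pi}{2}$), but does nothing for the persistence of the subsolution. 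Without the $c_t\le 0$ estimate via $\hat u$ and the correct direction of the monotonicity, the closedness argument has a hole. (As a minor point, the openness discussion should also note that $L$ is not self-adjoint, so the complement of the image is spanned by the positive function $\varphi_0\in\ker L^*$ and the normalization must be $\int_M u_t\,\varphi_0\,\chi^n = 0$ rather than $\sup u_t=0$; but that is routine once one sees it.)
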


\begin{remark}
	If $(M,\chi)$ is a K\"{a}hler manifold, then the supersolution $\hat{u}$ is a solution of the equation \eqref{DHYM}, using $\hat{\theta}$ is an invariant. However, $\hat{\theta}$ is not an invariant on almost Hermitian manifolds.
\end{remark}
 Let $\pi:L\rightarrow M $ be a complex line bundle on $M$
and $\varpi$ be a Hermitian metric on $L$ (for more details, see \cite[Section 2]{BT96} or \cite{WZ}). There exists a unique type $(1, 0)$
Hermitian connection $D_{\varpi}$ which is called the canonical Hermitian connection. Let $F(\varpi)$ be the curvature form of connection $D_{\varpi}$ and $F^{1,1}(\varpi)$ be the $(1,1)$ part of the curvature of $\varpi$. Denote $\varpi(u)=e^{-u}\varpi$. We have (see \cite{BT96} or \cite[(2.5)]{WZ})
\[F^{1,1}(\varpi(u))=F^{1,1}(\varpi)+\partial\bar{\partial} u. \]
We assume $\int_{M}\chi^{n}=1.$
Set $\omega=\sqrt{-1} F^{1,1}(\varpi)$ and
$$\hat{\theta}(\varpi(u))=\text{Arg}\int_{M}\frac{(\chi+\sqrt{-1}(\omega+\ddbar u))^{n}}{\chi^{n}}\chi^{n}.$$
 Here $\text{Arg}\, \vp$ means the argument of a complex function $\vp$.
%
Note that \eqref{DHYM 1} is equivalent to \eqref{DHYM}. We immediately obtain

\begin{corollary}
 	Suppose there exists a supersolution $\hat{u}$ of \eqref{DHYM}. Assume $F(\omega_{\hat{u}})>\frac{(n-1)\pi}{2}$ and $h\in (\frac{(n-1)\pi}{2}, \frac{n\pi}{2})$ is a constant.
	 Suppose there exists $\varpi(\underline{u})=e^{-\underline{u}}\varpi$ such that $\underline{u}$ is a $\mathcal{C}$-subsolution of \eqref{DHYM}.
	There exists a Hermitian metric $\varpi(u)$ on line bundle $L$ such that the argument of $\frac{\big(\chi+\sqrt{-1}F^{1,1}((\varpi)(u))\big)^{n}}{\chi^{n}}$ is constant.
\end{corollary}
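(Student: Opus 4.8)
The plan is to read this statement off directly from Theorem~\ref{constant angle theorem} after translating between Hermitian metrics on $L$ and potentials for $\omega$. Recall that with $\omega=\sqrt{-1}F^{1,1}(\varpi)$ and $\varpi(u)=e^{-u}\varpi$ one has $F^{1,1}(\varpi(u))=F^{1,1}(\varpi)+\partial\bar\partial u$, hence
\[
\sqrt{-1}\,F^{1,1}(\varpi(u))=\omega+\sqrt{-1}\partial\bar\partial u=\omega+\ddbar u=\omega_u .
\]
In particular $\omega$ is, by construction, a smooth real $(1,1)$-form, and prescribing the metric $\varpi(u)$ is the same as prescribing the potential $u$; so the data of the corollary --- a supersolution $\hat{u}$ with $F(\omega_{\hat{u}})>\frac{(n-1)\pi}{2}$, a constant $h\in(\frac{(n-1)\pi}{2},\frac{n\pi}{2})$, and a $\mathcal{C}$-subsolution $\underline{u}$ associated to $\varpi(\underline{u})=e^{-\underline{u}}\varpi$ --- are exactly the hypotheses of Theorem~\ref{constant angle theorem}.

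First I would apply Theorem~\ref{constant angle theorem} to produce a smooth function $u$ and a constant $c$ with $h+c>\frac{(n-1)\pi}{2}$ such that the eigenvalues $\lambda_1,\dots,\lambda_n$ of $\omega_u$ with respect to $\chi$ satisfy $\sum_i\arctan\lambda_i=h+c$. Next I would record the pointwise identity obtained by diagonalizing $\omega_u$ with respect to $\chi$:
\[
\frac{(\chi+\sqrt{-1}\omega_u)^n}{\chi^n}=\prod_{i=1}^n\bigl(1+\sqrt{-1}\lambda_i\bigr),
\]
whose argument equals $\sum_i\arctan\lambda_i=h+c$; here the bound $h+c<\frac{n\pi}{2}$, which holds automatically since each $\arctan\lambda_i<\frac{\pi}{2}$, guarantees that this is the principal argument and that there is no branch ambiguity, each factor $1+\sqrt{-1}\lambda_i$ having positive real part. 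Substituting $\omega_u=\sqrt{-1}F^{1,1}(\varpi(u))$ then shows that the argument of $\frac{(\chi+\sqrt{-1}F^{1,1}(\varpi(u)))^n}{\chi^n}$ is the constant $h+c$, which is the assertion; equivalently, by the equivalence of \eqref{DHYM 1} and \eqref{DHYM}, $u$ solves the deformed Hermitian-Yang-Mills equation with constant phase $\hat{\theta}=h+c$.

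Since all the analytic content --- the a priori estimates of Theorem~\ref{main theorem} and the existence argument behind Theorem~\ref{constant angle theorem} --- is already available, there is no real obstacle: the proof is a bookkeeping translation. The only points deserving a line of care are checking that $\omega=\sqrt{-1}F^{1,1}(\varpi)$ meets the regularity requirement of Theorem~\ref{constant angle theorem} (immediate, since the canonical Hermitian connection is smooth) and noting that the constant $c$ cannot move $h+c$ outside $(\frac{(n-1)\pi}{2},\frac{n\pi}{2})$, so that ``the argument is constant'' is a meaningful conclusion --- this follows from $h+c>\frac{(n-1)\pi}{2}$ in Theorem~\ref{constant angle theorem} together with the trivial bound $\sum_i\arctan\lambda_i<\frac{n\pi}{2}$. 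The necessity of the shift $c$, rather than solving with phase exactly $h$, reflects the fact noted in the Remark that $\hat{\theta}$ is not a topological invariant on a general almost Hermitian manifold.
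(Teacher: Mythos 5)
Your proposal is correct and takes essentially the same route as the paper: the paper states the corollary with no separate proof beyond the remark that \eqref{DHYM 1} and \eqref{DHYM} are equivalent, and your argument is precisely that translation --- substitute $\omega_u=\sqrt{-1}F^{1,1}(\varpi(u))$, diagonalize to get $\frac{(\chi+\sqrt{-1}\omega_u)^n}{\chi^n}=\prod_i(1+\sqrt{-1}\lambda_i)$, and invoke Theorem~\ref{constant angle theorem} to make $\sum_i\arctan\lambda_i=h+c$ constant. The only point worth tidying is your parenthetical claim about the principal argument: for $n\geq 3$ the value $h+c$ can exceed $\pi$, so it need not be the principal value; what you actually use (and all that is needed) is that each factor $1+\sqrt{-1}\lambda_i$ lies in the right half-plane so the lifted argument of the product is exactly $\sum_i\arctan\lambda_i$, hence constant.
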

In K\"{a}hler case, the proof in \cite{CJY} relied on the argument $\hat{\theta}$ of the integral is independent of the choice of $\omega_{u}$. They used this fact to prove the $\mathcal{C}$-subsolution is preserved along the family of equations used in the continuity method.
However, $\hat{\theta}(\varpi(u))$ depend on $u$ in our case. Under the existence of supersolution, we prove the $\mathcal{C}$-subsolution and hypercritical condition are preserved when we use the continuity method, by using the arguments of \cite{CJY} and \cite{SW}.

The organization of the paper is as follows.
In section 2, we recall the definition of $\mathcal{C}$-subsolution and some properties of equation \eqref{DHYM}. In section 3, we give the $C^{0}$ estimates. We use the argument of Sz\'{e}kelyhidi \cite{SG} (see also \cite{BL05}).   In section 4, the gradient estimates are proved. In Section 5, we will give the  second order estimates and complete the proof of Theorem \ref{main theorem}. In section 6, by the continuity method, we  prove Theorem \ref{constant angle theorem} under the condition of the existence of the supersolution.

\vskip3mm
{\bf Acknowledgment: }  The authors  would like to thank  the referees for many useful  suggestions and comments.


\section{preliminaries}

On an  almost Hermitian manifold  $(M,\chi,J)$ with  real dimension $2n$,  for any $(p,q)$-form $\beta$,    we can define $\partial$ and $\overline{\partial}$ operators  (cf. \cite{HL15,CTW16}). Denote by $A^{1,1}(M)$ the space of smooth real (1,1) forms on $(M,\chi,J)$. Then,  for any  $\varphi\in C^{2}(M)$,  $\ddbar \varphi=\frac{1}{2}(dJd\varphi)^{(1,1)}$ is a real $(1,1)$-form in $A^{1,1}(M)$. Let $\{e_{i}\}_{i=1}^{n}$ be a local frame for $T_{\mathbb{C}}^{(1,0)}M$ and $\{\theta^{1}, \cdots, \theta^{n}\}$  be a dual coframe associated to the  metric $\chi$ on $(M,\chi,J)$(cf. \cite[(2.5)]{HL15}).  Denote $\chi_{i\ol{j}}=\chi(e_{i},\ol{e}_{j})$ and $g_{i\ol{j}}=\omega(e_{i},\ol{e}_{j})$. Then $\chi=\chi_{i\ol{j}}\sqrt{-1}\theta^{i}\wedge \ol{\theta}^{j}$ and $\omega=g_{i\ol{j}}\sqrt{-1}\theta^{i}\wedge \ol{\theta}^{j}$.
We have
\begin{equation*}
\varphi_{i\overline{j}}=(\ddbar \varphi)(e_{i},\overline{e}_{j})=e_{i}\overline{e}_{j}(\varphi)-[e_{i},\overline{e}_{j}]^{(0,1)}(\varphi),
\end{equation*}
where $[e_{i}, \bar{e}_{j}]^{(0,1)}$ is the $(0,1)$ part of the Lie bracket $[e_{i}, \bar{e}_{j}]$.
We use the following notation
\begin{equation*}
F^{i\overline{j}}=\frac{\partial
	\sum_{k}\arctan\lambda_{k}(\tilde{g})}{\partial \tilde{g}_{i\overline{j}}},
\end{equation*}
where $\tilde{g}_{i\bar{j}}=g_{i\bar{j}}+u_{i\bar{j}}.$
For any point $x_{0}\in M$, let $\{e_{i}\}_{i=1}^{n}$ be a local unitary frame (with respect to $\chi$) such that $\tilde{g}_{i\overline{j}}(x_{0})=\delta_{ij}\tilde{g}_{i\overline{i}}(x_{0})$. We denote $\tilde{g}_{i\overline{i}}(x_{0})$ by $\lambda_{i}$.
It is useful to order $\{\lambda_{i}\}$ such that
\begin{equation*}
\lambda_{1}\geq\lambda_{2}\geq\cdots\geq\lambda_{n}.
\end{equation*}
Then at $x_{0}$, we have
\begin{equation}\label{the first order derivative of F}
F^{i\overline{j}}=F^{i\overline{i}}\delta_{ij}=\frac{1}{1+\lambda_{i}^{2}}\delta_{ij}.
\end{equation}
By \cite[(66)]{SG} or \cite{GC}, we deduce
\[F^{i\bar{k},j\bar{l}}=f_{ij}\delta_{ik}\delta_{jl}
+\frac{f_{i}-f_{j}}{\lambda_{i}-\lambda_{j}}
(1-\delta_{ij})\delta_{il}\delta_{jk}.\]
It follows that, at $x_{0}$,
\begin{equation}\label{the second order derivative of F}
F^{i\overline{k},j\overline{l}}=
\left\{\begin{array}{ll}
F^{i\overline{i},i\overline{i}}, \text{~~~~if $i=j=k=l$;}\\[1mm]
F^{i\overline{k},k\overline{i}}, \text{~~~~if $i=l$, $k=j$, $i\neq k$;}\\[1mm]
0,  \text{\quad\quad\quad~otherwise.}
\end{array}\right.
\end{equation}
Moreover, at $x_{0}$,
\begin{equation}\label{Definition of Fijkl}
\begin{split}
F^{i\overline{i},i\overline{i}} & = -\frac{2\lambda_{i}}{(1+\lambda^{2}_{i})^{2}},\\
F^{i\overline{k},k\overline{i}} & = -\frac{\lambda_{i}+\lambda_{k}}{(1+\lambda^{2}_{i})(1+\lambda^{2}_{k})}.
\end{split}
\end{equation}

The linearization operator of \eqref{DHYM} is
\begin{equation}\label{L}
L:=\sum_{i,j}F^{i\bar{j}}(e_{i}\bar{e}_{j}-[e_{i},\bar{e}_{j}]^{0,1}).
\end{equation}
Note that $[e_{i},\bar{e}_{j}]^{0,1}$ are first order defferential operators. By \eqref{the first order derivative of F}, $L$ is a second order elliptic operator.
\subsection{$\mathcal{C}$-subsolution}
Now we recall the definition of $\mathcal{C}$-subsolution of \eqref{DHYM} (\cite{CJY}, \cite{SG}).
Denote  \begin{equation*}
\Gamma_{n}=\{\lambda=(\lambda_{1}, \cdots, \lambda_{n})\in \mathbb{R}^{n},\  \lambda_{i}>0, 1\leq i \leq n\},
\end{equation*}
\begin{equation*}
\Gamma=\{\lambda=(\lambda_{1}, \cdots, \lambda_{n})\in \mathbb{R}^{n},\  \sum_{i}\arctan(\lambda_{i})> (n-1)\frac{\pi}{2}\},
\end{equation*}
and
\begin{equation*}
\Gamma^{\sigma}=\{\lambda\in \Gamma, \sum_{i}\arctan(\lambda_{i})> \sigma\},
\end{equation*}
where $\sigma\in((n-1)\frac{\pi}{2}, n\frac{\pi}{2})$.
\begin{defn}[\cite{CJY,SG}]\label{sub}
	We say that a smooth function $\underline{u}:M\rightarrow R$ is a $\mathcal{C}$-subsolution of \eqref{DHYM} if at each point $x\in M$, we have
	\begin{equation*}
	\left\{\lambda\in \Gamma: \sum_{i=1}^{n}\arctan(\lambda_{i})=h(x), \text{\ and\ }  \lambda-\lambda(\underline{u})\in \Gamma_{n}\right\}
	\end{equation*}
	is bounded.
\end{defn}
Collins-Jacob-Yau gave an explicit description of $\mathcal{C}$-subsolution.
\begin{lemma}[\cite{CJY}, Lemma 3.3]\label{subsolution}
	A smooth function $\underline{u}: M\rightarrow \mathbb{R} $ is a $\mathcal{C}$-subsolution of \eqref{DHYM} if and only if at each point $x\in M$,  for all $j=1,\cdots, n$, we have
	\begin{equation*}
	\sum_{i\neq j }\arctan (\lambda_{i}(\underline{u}))>h(x)-\frac{\pi}{2},
	\end{equation*}
	where $\lambda_{1}(\underline{u}), \cdots, \lambda_{n}(\underline{u})$ are the eigenvalues of $\omega_{\underline{u}}$ with respect to $\chi$.
\end{lemma}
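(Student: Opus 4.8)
The plan is to make the statement pointwise and purely Euclidean, then argue by contraposition in each direction. Fix $x\in M$ and abbreviate $\mu_i=\lambda_i(\underline u)(x)$ and $\theta=h(x)\in((n-1)\frac\pi2,\frac{n\pi}2)$. Since the equation $\sum_i\arctan\lambda_i=\theta>(n-1)\frac\pi2$ already forces $\lambda\in\Gamma$, Definition \ref{sub} says $\underline u$ is a $\mathcal C$-subsolution precisely when, at every point, the set $S=\{\lambda\in\mathbb R^n:\ \sum_i\arctan\lambda_i=\theta,\ \lambda_i>\mu_i\ \forall i\}$ is bounded. Hence it suffices to prove, for each fixed $x$, that $S$ is bounded if and only if $\sum_{i\neq j}\arctan\mu_i>\theta-\frac\pi2$ for every $j\in\{1,\dots,n\}$.

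For ``inequalities $\Rightarrow$ $S$ bounded'', I would assume the inequalities hold and $S$ is unbounded, and derive a contradiction. Choose $\lambda^{(k)}\in S$ with $|\lambda^{(k)}|\to\infty$; each coordinate is bounded below by $\mu_i$, so after passing to a subsequence there is a nonempty $J\subset\{1,\dots,n\}$ with $\lambda^{(k)}_i\to+\infty$ for $i\in J$ and $\lambda^{(k)}_i\to\ell_i<\infty$ for $i\notin J$. Letting $k\to\infty$ in $\sum_i\arctan\lambda^{(k)}_i=\theta$ gives $|J|\frac\pi2+\sum_{i\notin J}\arctan\ell_i=\theta$, so $|J|<n$ and $\sum_{i\notin J}\arctan\mu_i\le\sum_{i\notin J}\arctan\ell_i=\theta-|J|\frac\pi2$. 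Picking any $j\in J$ and bounding $\arctan\mu_i<\frac\pi2$ on $J\setminus\{j\}$ yields $\sum_{i\neq j}\arctan\mu_i\le(|J|-1)\frac\pi2+\big(\theta-|J|\frac\pi2\big)=\theta-\frac\pi2$, contradicting the hypothesis.

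For the converse, again by contraposition: suppose $\sum_{i\neq j}\arctan\mu_i\le\theta-\frac\pi2$ for some $j$, say $j=n$. The point is that for every finite $t>\max\{\mu_n,\tan(\theta-(n-1)\frac\pi2)\}$ the number $\theta-\arctan t$ lies strictly between $\sum_{i<n}\arctan\mu_i$ and $(n-1)\frac\pi2$ (the upper bound uses $\arctan t>\theta-(n-1)\frac\pi2$, the lower bound uses $\arctan t<\frac\pi2$ together with the failed inequality). Since $(\lambda_1,\dots,\lambda_{n-1})\mapsto\sum_{i<n}\arctan\lambda_i$ maps $\prod_{i<n}(\mu_i,\infty)$ continuously onto the open interval $\big(\sum_{i<n}\arctan\mu_i,(n-1)\frac\pi2\big)$ — evaluate along the ray $\lambda_i=\mu_i+s$, $s>0$, and apply the intermediate value theorem — one can solve $\sum_{i<n}\arctan\lambda_i=\theta-\arctan t$ with $\lambda_i>\mu_i$. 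Then $(\lambda_1,\dots,\lambda_{n-1},t)\in S$ has norm $\ge t$, so $S$ is unbounded.

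The routine ingredients are the subsequence extraction and the intermediate value argument; the only place needing a little care is the bookkeeping of $|J|$, in particular the case $|J|=1$ where the sum over $J\setminus\{j\}$ is empty, so that the final chain of inequalities comes out non-strict ($\le\theta-\frac\pi2$) and genuinely contradicts the assumed strict inequality. No compactness or PDE input enters; this is simply a statement about the level sets of $\lambda\mapsto\sum_i\arctan\lambda_i$, and it is precisely the specialization of Sz\'{e}kelyhidi's general $\mathcal C$-subsolution criterion in the range $\theta\in((n-1)\frac\pi2,\frac{n\pi}2)$.
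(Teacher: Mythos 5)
Your proof is correct. The paper does not prove this lemma at all --- it simply cites it as Lemma 3.3 of Collins--Jacob--Yau --- and your two-directional argument (subsequence extraction along the unbounded level set for one implication, the intermediate value theorem along a ray for the other) is exactly the standard verification there, i.e.\ the specialization of Sz\'ekelyhidi's $\mathcal{C}$-subsolution criterion to $f=\sum_i\arctan\lambda_i$ via $\lim_{t\to\infty}f(\mu+te_j)=\sum_{i\neq j}\arctan\mu_i+\frac{\pi}{2}$. The one delicate point, the non-strict inequality when $|J|=1$, is handled correctly.
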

Therefore, there are uniform constants $\delta, R>0$ such that at each $x\in M$ we have
\begin{equation}\label{2..12}
(\lambda(\underline{u})-\delta\textbf{1}+\Gamma_{n})\cap \partial\Gamma^{h(x)}\subset B_{R}(0),
\end{equation}
where $B_{R}(0)$ is a $R$-radius ball in $\mathbb{R}^{n}$ with center 0,  $\textbf{1}=(1,1,\cdots,1)$.

We now prove the following lemma:
\begin{lemma}\label{lemma subsolution}
	Suppose $h\in ((n-1)\frac{\pi}{2},n\frac{\pi}{2})$, then we have the following properties:
	\begin{enumerate}
		\item $f_{i}=\frac{\partial f}{\partial\lambda_{i}}>0$ for all $i$, and  the equation \eqref{DHYM} is concave,
		\item $\sup_{\partial \Gamma} f<\inf_{M}h$,
		\item for any $\sigma<\sup_{\Gamma} f$ and $\lambda\in \Gamma$ we have $\underset{t\rightarrow \infty}{\lim}f(t\lambda)>\sigma.$
	\end{enumerate}
\end{lemma}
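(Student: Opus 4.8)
The whole lemma reduces to one observation: the hypercritical phase condition forces the inclusion $\Gamma\subset\Gamma_n$, i.e.\ every $\lambda=(\lambda_1,\dots,\lambda_n)\in\Gamma$ has all entries positive. The plan is to establish this first. Given $\lambda\in\Gamma$ and a fixed index $j$, the bounds $\arctan\lambda_i<\frac{\pi}{2}$ yield $\sum_{i\ne j}\arctan\lambda_i<(n-1)\frac{\pi}{2}$, hence
\[
\arctan\lambda_j=\sum_{i}\arctan\lambda_i-\sum_{i\ne j}\arctan\lambda_i>(n-1)\frac{\pi}{2}-(n-1)\frac{\pi}{2}=0,
\]
so $\lambda_j>0$. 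In particular, since a solution $u$ of \eqref{DHYM} satisfies $\sum_i\arctan\lambda_i(u)=h>(n-1)\frac{\pi}{2}$ pointwise, its eigenvalues lie in $\Gamma\subset\Gamma_n$, which is the positivity of the complex eigenvalues used later in the estimates.

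Given this inclusion, part (1) is essentially immediate. The positivity $f_i=\frac{1}{1+\lambda_i^2}>0$ is read off from \eqref{the first order derivative of F}. For concavity, I would note that $\arctan$ is concave on $[0,\infty)$ (there its second derivative is $-\frac{2t}{(1+t^2)^2}\le0$), so the symmetric function $f(\lambda)=\sum_i\arctan\lambda_i$ is concave on the convex cone $\Gamma_n$, and therefore $\Gamma=\{\lambda\in\Gamma_n:f(\lambda)>(n-1)\frac{\pi}{2}\}$ is a convex set; by the standard principle that a symmetric concave function of the eigenvalues induces a concave function of the Hermitian matrix, the operator $F$ is concave on $\{\tilde g:\lambda(\tilde g)\in\Gamma\}$. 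Equivalently, one can check this directly: by \eqref{the second order derivative of F}--\eqref{Definition of Fijkl}, when all $\lambda_i>0$ one has $F^{i\bar i,i\bar i}=-\frac{2\lambda_i}{(1+\lambda_i^2)^2}\le0$ and $F^{i\bar k,k\bar i}=-\frac{\lambda_i+\lambda_k}{(1+\lambda_i^2)(1+\lambda_k^2)}\le0$, so the second variation of $F$ is nonpositive.

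For part (2), since each summand satisfies $\arctan\lambda_i<\frac{\pi}{2}$ we have $f<n\frac{\pi}{2}$ on all of $\mathbb{R}^n$, so $\Gamma=f^{-1}\big((n-1)\frac{\pi}{2},\,n\frac{\pi}{2}\big)$ is open with boundary $\partial\Gamma=f^{-1}\big(\{(n-1)\frac{\pi}{2}\}\big)$; thus $f\equiv(n-1)\frac{\pi}{2}$ on $\partial\Gamma$ and $\sup_{\partial\Gamma}f=(n-1)\frac{\pi}{2}$, while closedness of $M$ and continuity of $h$ with $h>(n-1)\frac{\pi}{2}$ give $\inf_M h>(n-1)\frac{\pi}{2}$, which proves (2). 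For part (3), evaluating on $\lambda=(t,\dots,t)$ as $t\to\infty$ shows $\sup_\Gamma f=n\frac{\pi}{2}$, so any $\sigma<\sup_\Gamma f$ has $\sigma<n\frac{\pi}{2}$; and for $\lambda\in\Gamma$ all entries are positive, hence $\lim_{t\to\infty}f(t\lambda)=\sum_i\lim_{t\to\infty}\arctan(t\lambda_i)=n\frac{\pi}{2}>\sigma$. The only step carrying any real content is the inclusion $\Gamma\subset\Gamma_n$; after that, the one point requiring a little care is passing from concavity of the scalar function $f$ to concavity of the matrix operator $F$, which is the standard spectral-function fact and is in any case transparent from the signs in \eqref{Definition of Fijkl}.
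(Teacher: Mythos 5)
Your proof is correct and follows essentially the same route as the paper: establish the inclusion $\Gamma\subset\Gamma_n$ (the paper does this by contradiction, you do it by the equivalent direct subtraction argument), then read off positivity of $f_i$, concavity from the signs in \eqref{Definition of Fijkl}, the value of $f$ on $\partial\Gamma$, and the limit as $t\to\infty$. Your write-up is a bit more explicit on a couple of points the paper treats as obvious (in particular the identification $\partial\Gamma=f^{-1}\{(n-1)\tfrac{\pi}{2}\}$ and the passage from concavity of the symmetric function $f$ on $\Gamma_n$ to concavity of the matrix operator $F$), but there is no substantive difference in method.
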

\begin{proof}
	Note $f_{i}=\frac{1}{1+\lambda_{i}^{2}}$.	It is obvious that $f_{i}>0$ and  $\sup_{\partial \Gamma} f<\inf_{M}h$, if $h>(n-1)\frac{\pi}{2}$.
	
	When $h>(n-1)\frac{\pi}{2}$, we have
	\begin{equation}\label{psh}
	\lambda_{i}>0, ~ \textrm{for}~ i=1,2,\cdots,n.
	\end{equation} In fact, if there is $\lambda_{i}\leq 0$, then we must have $\sum_{j}\arctan\lambda_{j}\leq (n-1)\frac{\pi}{2} $ and this is a contradiction. Hence, by \eqref{the second order derivative of F},  $f$  is  concave and
	$\underset{t\rightarrow \infty}{\lim}f(t\lambda)=n\frac{\pi}{2}>\sigma$.
\end{proof}
Using the above Lemma and \cite[Proposition 6 and Lemma 9]{SG}, we have the following Proposition. It plays an important role in the proof.

\begin{proposition}\label{prop subsolution}
	Let $[a,b]\subset((n-1)\frac{\pi}{2}, n\frac{\pi}{2})$ and $\delta, R>0$. There exists $\theta>0$, depending on $\sigma$ and the set in \eqref{set subsolution}, with the following property: suppose that $\sigma\in [a,b]$ and
	$B$ is a Hermitian matrix such that
	\begin{equation}\label{set subsolution}
	(\lambda(B)-2\delta\textbf{1}+\Gamma_{n})\cap\partial\Gamma^{\sigma}\subset B_{R}(0).
	\end{equation}
	Then for any Hermitian matrix A with eigenvalues $\lambda(A)\in \partial \Gamma^{\sigma}$ and $|\lambda(A)|>R$, we either have
	\begin{equation}\label{sub1}
	\sum_{p,q}F^{p\ol{q}}(A)[B_{p\ol{q}}-A_{p\ol{q}}]>\theta\sum_{p}F^{p\ol{p}}(A)
	\end{equation}
	or
	\begin{equation}\label{sub2}
	F^{i\ol{i}}(A)>\theta\sum_{p}F^{p\ol{p}}(A)
	\end{equation} for all $i$. In addition, there exists a constant $\mathcal{K}$ depending on $\sigma$ such that
	\begin{equation}\label{sub3}
	\sum_{i}F^{i\ol{i}}>\mathcal{K}.
	\end{equation}
\end{proposition}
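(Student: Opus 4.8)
The plan is to obtain the dichotomy \eqref{sub1}--\eqref{sub2} directly from the abstract structure theory of Sz\'ekelyhidi \cite[Proposition 6 and Lemma 9]{SG}, once it is checked that the operator $f(\lambda)=\sum_{i}\arctan\lambda_{i}$ on $\Gamma$ satisfies all the hypotheses required there, and then to establish the trace bound \eqref{sub3} by an elementary computation exploiting $\sigma>(n-1)\frac{\pi}{2}$.

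First I would record, using Lemma \ref{lemma subsolution}, that $f$ is symmetric, has $f_{i}=(1+\lambda_{i}^{2})^{-1}>0$ on $\Gamma$, is concave there, satisfies $\sup_{\partial\Gamma}f=(n-1)\frac{\pi}{2}<a\le\sigma$, and obeys $\lim_{t\to\infty}f(t\lambda)=n\frac{\pi}{2}>\sigma$ for every $\lambda\in\Gamma$; these are precisely the structural conditions on $f$ and $\Gamma$ under which \cite[Proposition 6]{SG} is proved. Next I would note that \eqref{set subsolution} is exactly Sz\'ekelyhidi's $\mathcal{C}$-subsolution condition at level $\sigma$ for the Hermitian matrix $B$, and that such $B$ do exist --- for instance $B=\omega_{\underline{u}}$, with $\delta$ taken to be one half of the constant produced in \eqref{2..12} via Lemma \ref{subsolution}.

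With these verifications in hand, the core assertion follows: for $\sigma\in[a,b]$, for $B$ satisfying \eqref{set subsolution}, and for any Hermitian matrix $A$ with $\lambda(A)\in\partial\Gamma^{\sigma}$ (equivalently $f(\lambda(A))=\sigma$) and $|\lambda(A)|>R$, \cite[Proposition 6]{SG} --- whose proof uses \cite[Lemma 9]{SG} to produce the branch \eqref{sub2} --- yields a constant $\theta>0$, depending only on $\sigma$ and on the data $\delta,R$ of \eqref{set subsolution}, such that either \eqref{sub1} holds or \eqref{sub2} holds for every $i$; here $F^{p\ol{q}}(A)$ are the coefficients from \eqref{the first order derivative of F}. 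Since $\sigma$ ranges over the compact interval $[a,b]\subset((n-1)\frac{\pi}{2},n\frac{\pi}{2})$, all the constants that appear stay uniformly controlled, so $\theta$ may be chosen independently of $\sigma$. For \eqref{sub3} I would argue directly: since $\lambda(A)\in\partial\Gamma^{\sigma}$ with $\sigma>(n-1)\frac{\pi}{2}$, \eqref{psh} gives $\lambda_{i}(A)>0$ for all $i$; ordering $\lambda_{1}\ge\cdots\ge\lambda_{n}>0$ we get $n\arctan\lambda_{n}\le\sum_{i}\arctan\lambda_{i}=\sigma$, hence $\lambda_{n}\le\tan(\sigma/n)$, and therefore
\begin{equation*}
\sum_{i}F^{i\ol{i}}(A)=\sum_{i}\frac{1}{1+\lambda_{i}^{2}}\ge\frac{1}{1+\lambda_{n}^{2}}\ge\frac{1}{1+\tan^{2}(\sigma/n)}=\cos^{2}(\sigma/n)=:\mathcal{K}>0,
\end{equation*}
which in addition gives $\mathcal{K}\ge\cos^{2}(b/n)>0$ uniformly for $\sigma\in[a,b]$.

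The main obstacle is not in the present argument but in the content already established in \cite[Proposition 6]{SG}: its proof combines the concavity of $f$ with a convex-geometric analysis of the level hypersurface $\partial\Gamma^{\sigma}$ near points of large norm, according to whether $\lambda(A)$ stays in the bounded region cut out by the subsolution $B$ or escapes to infinity along some coordinate direction. All that is needed here is to confirm --- via Lemma \ref{lemma subsolution} and the hypercritical range $((n-1)\frac{\pi}{2},n\frac{\pi}{2})$ of $h$ --- that $f=\sum_{i}\arctan\lambda_{i}$ fits that framework; the computation giving \eqref{sub3} is then routine, once one knows $\lambda(A)\in\Gamma_{n}$.
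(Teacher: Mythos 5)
Your proposal takes essentially the same route as the paper: the paper's own ``proof'' of this proposition is a one-line citation, ``Using the above Lemma and \cite[Proposition 6 and Lemma 9]{SG}, we have the following Proposition,'' where Lemma \ref{lemma subsolution} supplies the structural hypotheses on $f$ and $\Gamma$ that Sz\'ekelyhidi's framework requires. Your verification of those hypotheses and your invocation of \cite[Proposition 6]{SG} for the dichotomy \eqref{sub1}--\eqref{sub2} matches the paper exactly. The one genuine point of difference is \eqref{sub3}: the paper relies on the general \cite[Lemma 9]{SG}, whereas you give a short self-contained computation. Your argument there is correct -- from $\lambda(A)\in\partial\Gamma^{\sigma}$ with $\sigma>(n-1)\tfrac{\pi}{2}$ one has $\lambda_{i}>0$ for all $i$ by \eqref{psh}, the smallest eigenvalue satisfies $n\arctan\lambda_{n}\le\sigma$, hence $\lambda_{n}\le\tan(\sigma/n)$ with $\sigma/n<\pi/2$, giving $\sum_{i}F^{i\bar{i}}\ge(1+\lambda_{n}^{2})^{-1}\ge\cos^{2}(\sigma/n)\ge\cos^{2}(b/n)>0$ -- and it has the small added benefit of producing an explicit value of $\mathcal{K}$ uniform over $\sigma\in[a,b]$, rather than an abstract one. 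Both approaches are sound; yours just makes the trace bound more transparent.
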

\begin{corollary}\label{Cor2.6}
Suppose $h\in((n-1)\frac{\pi}{2},n\frac{\pi}{2})$. Assume $\underline{u}$ is an admissible $\mathcal{C}$-subsolution and $u$ is the smooth solution for (\ref{DHYM}). Then
there exists a constant $\theta>0$ (depending only on $h$ and $\underline{u}$) such that 
 either
\begin{equation}\label{2..25}
L(\underline{u}-u)\geq \theta\sum_{i}F^{i\bar{i}}(\tilde{g})
\end{equation}
or
\begin{equation}\label{2..26}
F^{k\bar{k}}(\tilde{g})\geq \theta\underset{i}{\sum}F^{i\bar{i}}(\tilde{g}),~ \textrm{for}~ k=1,2,\cdots,n
\end{equation}
 if  $\lambda(\omega_{u})\in(\lambda(\omega_{\underline{u}})-\delta \textbf{1}+\Gamma_{n})\cap \partial\Gamma^{h}$.

In addition, there is a constant $\mathcal{K}>0$ depending on $h$ and $\underline{u}$ such that
  \begin{equation}\label{2..27}
  \mathcal{F}:=\sum_{i}F^{i\bar{i}}(\tilde{g})>\mathcal{K},~ \textrm{if}~\lambda(\omega_{u})\in\partial\Gamma^{h}.
  \end{equation}
\end{corollary}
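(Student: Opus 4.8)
The plan is to deduce Corollary \ref{Cor2.6} as a pointwise statement from Proposition \ref{prop subsolution}. Fix $x_0\in M$ and work in a unitary frame for $\chi$ near $x_0$ that diagonalizes $\tilde{g}=\omega_{u}$ at $x_0$; put $A=\big(\tilde{g}_{i\bar{j}}(x_0)\big)$ and $B=\big(g_{i\bar{j}}(x_0)+\underline{u}_{i\bar{j}}(x_0)\big)$, so that $\lambda(A)=\lambda(\omega_{u})$ and $\lambda(B)=\lambda(\omega_{\underline{u}})$ at $x_0$. The identity that makes the translation work is the following: since $\ddbar$ is linear one has $B_{p\bar{q}}-A_{p\bar{q}}=(\underline{u}-u)_{p\bar{q}}$, so by the definition \eqref{L} of $L$ and the formula $\vp_{i\bar{j}}=(e_i\bar{e}_j-[e_i,\bar{e}_j]^{0,1})(\vp)$ one has
\begin{equation*}
L(\underline{u}-u)(x_0)=\sum_{i,j}F^{i\bar{j}}(\tilde{g})\,(\underline{u}-u)_{i\bar{j}}=\sum_{p,q}F^{p\bar{q}}(A)\,[B_{p\bar{q}}-A_{p\bar{q}}].
\end{equation*}
Hence inequality \eqref{sub1} of Proposition \ref{prop subsolution} is exactly \eqref{2..25}, and \eqref{sub2} is exactly \eqref{2..26}.

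Next I would arrange the hypotheses of Proposition \ref{prop subsolution} uniformly. Since $M$ is closed and $h$ smooth, $[a,b]:=h(M)$ is a compact subinterval of $\big((n-1)\frac{\pi}{2},n\frac{\pi}{2}\big)$. Let $\delta_0,R_0>0$ be the constants furnished by Lemma \ref{subsolution} and \eqref{2..12}, and set $\delta:=\delta_0/2$, which is the $\delta$ of the statement; then for each $x_0$ the matrix $B=B(x_0)$ satisfies \eqref{set subsolution} with $R=R_0$ at $\sigma:=h(x_0)\in[a,b]$. Proposition \ref{prop subsolution} then provides a constant $\theta_0>0$, uniform in $x_0$ by compactness of $[a,b]$ — it depends only on $\sigma\in[a,b]$ and on the fixed $\delta_0,R_0$ — such that whenever $\lambda(\omega_{u})\in\partial\Gamma^{h}$ at $x_0$ and $|\lambda(\omega_{u})(x_0)|>R_0$, either \eqref{sub1} or \eqref{sub2} holds; by the identity above this reads \eqref{2..25} or \eqref{2..26}.

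It then remains to treat the range $|\lambda(\omega_{u})(x_0)|\le R_0$, which Proposition \ref{prop subsolution} does not cover. There $|\lambda_k|\le R_0$ for all $k$, so \eqref{the first order derivative of F} gives $F^{k\bar{k}}=\frac{1}{1+\lambda_k^2}\ge\frac{1}{1+R_0^2}$, while $\sum_iF^{i\bar{i}}\le n$ since each $F^{i\bar{i}}\le1$; hence $F^{k\bar{k}}\ge\frac{1}{n(1+R_0^2)}\sum_iF^{i\bar{i}}$, that is, \eqref{2..26} holds outright in this range. Taking $\theta:=\min\{\theta_0,\frac{1}{n(1+R_0^2)}\}$ then yields the dichotomy \eqref{2..25}--\eqref{2..26} whenever $\lambda(\omega_{u})\in(\lambda(\omega_{\underline{u}})-\delta\textbf{1}+\Gamma_n)\cap\partial\Gamma^{h}$. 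Finally, \eqref{2..27} follows from \eqref{sub3}: for $\lambda(\omega_{u})\in\partial\Gamma^{h}$ with $|\lambda(\omega_{u})|>R_0$ that inequality gives $\sum_iF^{i\bar{i}}(\tilde{g})>\mathcal{K}(\sigma)$ with $\sigma=h(x)$, the bound $\sum_iF^{i\bar{i}}\ge n/(1+R_0^2)$ handles the remaining eigenvalues, and compactness of $[a,b]$ lets one fix a single $\mathcal{K}$ independent of $x$. I do not anticipate a genuine obstacle here; the points requiring care are the faithful identification of $L$ with the algebraic expression in \eqref{sub1} via the bracket formula, the factor-of-two bookkeeping between \eqref{2..12} and \eqref{set subsolution}, and the elementary estimate supplying the regime $|\lambda(\omega_{u})|\le R_0$ that Proposition \ref{prop subsolution} leaves out.
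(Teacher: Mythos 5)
Your proof is correct and follows essentially the same route as the paper: translate $L(\underline{u}-u)$ into the algebraic expression appearing in Proposition \ref{prop subsolution}, invoke that proposition when $|\lambda(\omega_u)|>R$, and handle $|\lambda(\omega_u)|\le R$ by the elementary bound $1\ge F^{i\bar i}\ge\frac{1}{1+R^2}$, which forces both \eqref{2..26} and \eqref{2..27}. You spell out a few points the paper leaves implicit, notably the identification of $L(\underline{u}-u)$ with $\sum F^{p\bar q}(A)[B_{p\bar q}-A_{p\bar q}]$ via the bracket formula and the $\delta$ versus $2\delta$ bookkeeping between \eqref{2..12} and the hypothesis \eqref{set subsolution} of Proposition \ref{prop subsolution}, but these are faithful elaborations of the same argument, not a different route.
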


\begin{proof}
By Definition \ref{sub}, 	there are uniform constants $\delta, R>0$ such that at each $x\in M$ we have
	\begin{equation*}
	(\lambda(\underline{u})-\delta\textbf{1}+\Gamma_{n})\cap \partial\Gamma^{h(x)}\subset B_{R}(0).
	\end{equation*}
If $|\lambda(u)|>R$, by Proposition \ref{prop subsolution}, the results follow. If  $|\lambda(u)|\leq R$, then $1\geq F^{i\bar{i}}\geq \frac{1}{1+R^{2}}, i=1,2, \cdots, n$, which implies \eqref{2..26} and \eqref{2..27} hold.

\end{proof}

\section{Zero order estimates}
In this seciton we prove the $C^{0}$ estimates.
We need the following Proposition provided by \cite[Proposition 2.3]{CTW16}.
\begin{proposition}\label{L1}
	Let $(M, \chi, J)$ be a compact almost Hermitian manifold.  Suppose that $\psi$ satisfies
	\begin{equation*}
	\omega+\ddbar \psi>0,\ \  \sup_{M}\psi=0.
	\end{equation*}
	Then there exists a constant $C$ depending only on $(M, \chi, J)$ and $\omega$ such that
	\begin{equation*}
	\int_{M}(-\psi)\chi^{n}\leq C.
	\end{equation*}
\end{proposition}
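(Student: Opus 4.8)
The plan is to adapt the classical Moser iteration / $L^1$-bound argument for the complex Monge–Ampère type inequality to the almost Hermitian setting, following \cite[Proposition 2.3]{CTW16}. The point is that $\omega + \ddbar\psi > 0$ together with $\sup_M \psi = 0$ gives enough positivity to run a De Giorgi–Nash–Moser style estimate even though $\chi$ need not be K\"ahler.

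First I would normalize and set $\psi \le 0$ with $\sup_M \psi = 0$. The key analytic input is an inequality of the form $\int_M (-\psi) (-\ddbar\psi)\wedge \alpha \le C$ for a suitable positive $(n-1,n-1)$-auxiliary form built from $\chi$ and $\omega$; concretely, since $\omega + \ddbar\psi \ge 0$, one has $\ddbar(-\psi) \le \omega$, so testing against the positive form $\chi^{n-1}$ (or a fixed combination $(\chi+\omega)^{n-1}$) and integrating by parts — here one must keep careful track of the torsion terms $[e_i,\bar e_j]^{(0,1)}$ and the non-closedness of $\chi^{n-1}$, which produce controlled first-order error terms — yields
\begin{equation*}
\int_M \sqrt{-1}\,\partial(-\psi)\wedge\bar\partial(-\psi)\wedge \chi^{n-1} \le C\Big(1 + \int_M (-\psi)\,\chi^n\Big).
\end{equation*}
This is the almost-Hermitian analogue of the Dirichlet energy bound. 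Combined with the almost-Hermitian Sobolev and Poincar\'e inequalities on the compact manifold $(M,\chi)$, one bootstraps to an estimate controlling, say, $\|\psi\|_{L^2}$ by $\|\psi\|_{L^1}$ plus a constant, and then Moser iteration upgrades this to $\sup_M(-\psi) \le C(1 + \|\psi\|_{L^1})$; but $\sup_M(-\psi)$ is not yet what we want — rather we use the iteration in the reverse direction, or more precisely we use the standard trick (as in Tian, Ko\l odziej, or the exposition in \cite{CTW16}) that a function $\psi$ with $\ddbar\psi \le \omega$ and $\sup\psi = 0$ automatically satisfies a uniform bound on $\int_M (-\psi)\chi^n$ by a compactness/contradiction argument: if $\int_M(-\psi_k)\chi^n \to \infty$ along a sequence, rescale $v_k = \psi_k / \|\psi_k\|_{L^1}$, extract a weak limit $v_\infty \le 0$ with $\ddbar v_\infty \le 0$ in the distributional sense and $\|v_\infty\|_{L^1} = 1$, derive from $\sup\psi_k = 0$ that $v_\infty = 0$ (a maximum principle / mean-value argument for the almost-Hermitian Laplacian-type operator $\mathrm{tr}_\chi \ddbar$), contradicting $\|v_\infty\|_{L^1}=1$.

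The technical heart — and the step I expect to be the main obstacle — is making the integration by parts and the weak maximum principle rigorous in the almost complex category, since $\partial\bar\partial$ here is only the $(1,1)$-part of $\tfrac12 dJd$ and $\partial^2, \bar\partial^2$ need not vanish. One has to verify that the extra torsion terms (first-order in $\psi$) can be absorbed using Cauchy–Schwarz after paying a small multiple of the good gradient term, exactly as in \cite{CTW16}. The rest — Sobolev inequality, Moser iteration, the compactness argument — is then formally identical to the K\"ahler case. Since this Proposition is quoted verbatim from \cite{CTW16}, in the paper one would simply cite it; the sketch above indicates how its proof goes.
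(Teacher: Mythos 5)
The paper's proof of Proposition \ref{L1} is a one-line reduction, not a from-scratch argument: pick $C_0$ with $C_0\chi\geq\omega$; then $C_0\chi+\ddbar\psi\geq\omega+\ddbar\psi>0$, so $\psi$ is $(C_0\chi)$-plurisubharmonic with $\sup_M\psi=0$, and the $L^1$ bound follows from \cite[Proposition 2.3]{CTW16} applied with $C_0\chi$ in place of $\chi$ (the rescaling only changes the constant). You eventually note that one should just cite \cite{CTW16}, but your assertion that the proposition is ``quoted verbatim'' from that reference is not quite right: there the hypothesis is $\chi+\ddbar\psi>0$, whereas here $\omega$ is an arbitrary real $(1,1)$-form. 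The small but necessary bridge is precisely the rescaling $\chi\mapsto C_0\chi$, and your proposal skips it.

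As for the from-scratch sketch you give, it has a genuine gap in the compactness/contradiction step. Weak $L^1$ convergence of $v_k=\psi_k/\|\psi_k\|_{L^1}$ does not retain the normalization $\sup_M v_k=0$, so you cannot conclude $v_\infty=0$ and derive a contradiction from $\sup_M\psi_k=0$ alone. The standard repair is a sub-mean-value inequality for $\chi$-psh functions plus a Harnack chain from a point where $\psi_k$ is near its maximum, i.e.\ Hartogs-type compactness for the cone of $\chi$-psh functions; but establishing exactly that in the almost Hermitian setting (with the torsion terms you flag) is the content of \cite[Proposition 2.3]{CTW16}, so invoking it inside your sketch would make the argument circular. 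The paper avoids all of this by simply citing the reference after the rescaling observation.
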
	
\begin{proof}
	There exists a constant $C_{0}$ such that $C_{0}\chi\geq \omega.$ Therefore, by \eqref{psh}, $C_{0}\chi+\ddbar\psi >0$. Then by \cite[Proposition 2.3]{CTW16}, we  have
	\begin{equation*}
	\int_{M}(-\psi)\chi^{n}\leq C.
	\end{equation*}
\end{proof}	
Indeed, by \eqref{psh}, the assumption in Proposition \ref{L1} is satisfied in our paper.
The following variant of the Alexandroff-Bakelman-Pucci maximum principle \cite[Proposition 11]{SG}, similarly with Gilbarg-Trudinger \cite[Lemma 9.2]{GT01},  is used to prove the $C^{0}$ estimates.
\begin{proposition}\label{ABP}
	Let $\vp:B_{1}(0)\rightarrow \mathbb{R}$ be a smooth function, such that $\vp(0)+\ve\leq \inf_{\partial B_{1}(0)}\vp$, where $\ve>0$, $B_{1}(0)\subset \mathbb{R}^{2n}$. Define the set
	\begin{equation*}
	P=\left\{ x\in B_{1}(0):
	\begin{matrix} |D\vp(x)|<\frac{\ve}{2}, \text{\ and\ }
	\vp(y)\geq \vp(x)+D\vp(x)\cdot (y-x)\\
	\text{ for all } y\in B_{1}(0)
	\end{matrix} \right\}.
	\end{equation*}
	Then there exists a constant $c_{0}$ depending only on $n$ such that
	\begin{equation*}
	c_{0}\ve^{2n}\leq \int_{P}\det(D^{2}\vp).
	\end{equation*}
\end{proposition}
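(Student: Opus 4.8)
The plan is to run the classical proof of the Alexandroff--Bakelman--Pucci estimate via the gradient map $D\vp$ restricted to the contact set $P$. Two facts do all the work: (i) the image $D\vp(P)$ contains the ball $B_{\ve/2}(0)\subset\mathbb{R}^{2n}$, and (ii) $D^{2}\vp(x)\geq 0$ (hence $\det D^{2}\vp(x)\geq 0$) for every $x\in P$. Granting these, the desired inequality is just the area formula applied to $g:=D\vp$ together with the nonnegativity of the Jacobian on $P$.

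First I would prove (i). Fix $p\in\mathbb{R}^{2n}$ with $|p|<\ve/2$ and consider $\psi(y):=\vp(y)-p\cdot y$ on the compact set $\oo:=\overline{B_{1}(0)}$; let $x_{0}$ be a point where $\psi$ attains its minimum. If $x_{0}\in\partial B_{1}(0)$, then comparing with $y=0$ gives
\begin{equation*}
\vp(0)=\psi(0)\geq\psi(x_{0})=\vp(x_{0})-p\cdot x_{0}\geq \inf_{\partial B_{1}(0)}\vp-|p|\geq \vp(0)+\ve-\tfrac{\ve}{2},
\end{equation*}
a contradiction; this is exactly where the hypotheses $\vp(0)+\ve\leq\inf_{\partial B_{1}(0)}\vp$ and the threshold $\ve/2$ enter. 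Hence $x_{0}$ is an interior minimum of $\psi$, so $D\vp(x_{0})=p$ (in particular $|D\vp(x_{0})|<\ve/2$), $D^{2}\vp(x_{0})\geq 0$, and the minimality of $\psi$ reads $\vp(y)\geq\vp(x_{0})+D\vp(x_{0})\cdot(y-x_{0})$ for all $y\in B_{1}(0)$; thus $x_{0}\in P$ and $p\in D\vp(P)$. For (ii), take any $x\in P$: by definition of $P$, $x$ is a minimum over $B_{1}(0)$ of $y\mapsto\vp(y)-D\vp(x)\cdot y$, and since $x$ lies in the open ball this is an interior minimum, so $D^{2}\vp(x)\geq 0$ and $\det D^{2}\vp(x)\geq 0$.

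Finally I would apply the area formula to the $C^{1}$ map $g=D\vp$, whose differential is $D^{2}\vp$: since $P$ is Borel,
\begin{equation*}
\int_{P}|\det D^{2}\vp(x)|\,dx=\int_{\mathbb{R}^{2n}}\#\{x\in P:\,D\vp(x)=y\}\,dy\geq |D\vp(P)|\geq |B_{\ve/2}(0)|=|B_{1}(0)|\,\big(\tfrac{\ve}{2}\big)^{2n}.
\end{equation*}
Combining with $\det D^{2}\vp\geq 0$ on $P$ from (ii) yields $\int_{P}\det(D^{2}\vp)\geq c_{0}\ve^{2n}$ with $c_{0}=2^{-2n}|B_{1}(0)|$, a constant depending only on $n$. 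I do not expect a genuine obstacle here: this is precisely \cite[Proposition 11]{SG} (see also \cite[Lemma 9.2]{GT01}), and the only delicate points are the boundary-exclusion step above and invoking the area formula for the gradient map, the latter being harmless since $\vp$ is smooth so $g=D\vp$ is as regular as needed.
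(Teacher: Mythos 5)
Your proof is correct: the contact-point argument showing $B_{\ve/2}(0)\subset D\vp(P)$, the observation that $D^{2}\vp\geq 0$ on $P$, and the area-formula/change-of-variables step are exactly the standard ABP machinery. The paper itself gives no proof of this proposition — it simply cites Sz\'ekelyhidi \cite[Proposition 11]{SG} and points to Gilbarg--Trudinger \cite[Lemma 9.2]{GT01} — and your argument is essentially a faithful reconstruction of the proof in those references.
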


The $C^{0}$ estimates follow the argument of \cite[Proposition 10]{SG}  or \cite[Proposition 3.1]{CTW16}. It  is similar to \cite[Proposition 3.2]{ZJ}. For the reader's convenience, we include the proof.
\begin{proposition}\label{Prop3.2}
	Let $u$ be the solution for \eqref{DHYM} with $\sup_{M}(u-\underline{u})=0$. Then
	\begin{equation}\label{}
	\|u\|_{L^{\infty}}\leq C
	\end{equation}
	for some constant $C>0$ depending on $(M, \chi, J)$, $\omega$, $h$ and $\underline{u}$.
\end{proposition}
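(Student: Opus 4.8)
The plan is to follow the classical Błocki–Székelyhidi normalization argument, adapted to the almost Hermitian setting as in \cite{CTW16,ZJ}. First I would normalize so that $\sup_M(u-\underline{u})=0$, hence it suffices to bound $\inf_M u$ from below (equivalently, to bound $\|u\|_{L^\infty}$, since $\underline{u}$ is fixed). Suppose for contradiction that $\inf_M u = -\Lambda$ with $\Lambda$ very large, attained at a point $x_0$. Working in a coordinate chart (identified with a ball $B_1(0)\subset\mathbb{R}^{2n}$ centered at $x_0$), I would consider the auxiliary function $\vp(x) = u(x) - \underline{u}(x) + \tau|x|^2$ for a small fixed $\tau>0$ chosen so that $\omega + \ddbar(\underline{u}) + \tau\,\ddbar(|x|^2)$ stays within the admissible cone region; this makes $\vp$ have a strict interior minimum near $x_0$ with $\vp(x_0)\le -\Lambda + \text{l.o.t.}$ while $\vp|_{\partial B_1}$ is controlled, so the hypothesis $\vp(0)+\ve\le\inf_{\partial B_1}\vp$ of Proposition \ref{ABP} holds with $\ve$ comparable to $\Lambda$.

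Next I would apply Proposition \ref{ABP} to get $c_0\ve^{2n}\le \int_P \det(D^2\vp)$, where $P$ is the contact set of points where $\vp$ lies above its tangent plane and $|D\vp|<\ve/2$. On $P$ the real Hessian $D^2\vp$ is positive semidefinite, and a linear-algebra lemma (as in \cite[Prop. 10]{SG}) bounds $\det(D^2\vp)$ by a constant times the product of the complex Hessian eigenvalues of $u-\underline{u}+\tau|x|^2$, up to first-derivative error terms coming from the non-integrability of $J$ (the $[e_i,\bar e_j]^{(0,1)}$ terms), which are absorbed using $|D\vp|<\ve/2$ on $P$. The key point is then that on $P$, since $\ddbar(u-\underline u+\tau|x|^2)\ge 0$ there, the eigenvalues $\lambda_i(\omega_u)$ dominate $\lambda_i(\omega_{\underline u} + \tau\,\ddbar|x|^2) - O(1)$; combined with the $\mathcal{C}$-subsolution property (Definition \ref{sub}, Lemma \ref{subsolution}) and the equation $\sum_i \arctan\lambda_i(u) = h$, the set of such admissible $\lambda(\omega_u)$ is \emph{bounded} by a uniform constant. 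Hence the complex Hessian eigenvalues of $u-\underline u+\tau|x|^2$ on $P$ are bounded above, giving $\int_P\det(D^2\vp)\le C\,|P|\le C\,|B_1|\le C$. Combining with $c_0\ve^{2n}\le C$ forces $\ve$, and therefore $\Lambda$, to be bounded — a contradiction for $\Lambda$ large.

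I would then need to handle the measure-theoretic/covering aspect: a priori the chart around $x_0$ has fixed size, so this directly bounds $\inf_M u$ in terms of the maximum drop over a fixed ball; since $M$ is compact and connected one passes from a local to a global bound in the standard way (or, as in \cite{SG}, the argument is run directly with $B_1(0)$ identified near the minimum point and $\ve \sim \Lambda$). The positivity $\omega_u>0$ (i.e. all $\lambda_i>0$), which is available here by \eqref{psh} since $h>(n-1)\frac\pi2$, is exactly what lets us treat $u$ as $\omega$-plurisubharmonic and apply Proposition \ref{L1} if an integral estimate is needed as an intermediate step; but in the ABP approach the $L^1$ bound of Proposition \ref{L1} is not strictly necessary — it is the boundedness of the admissible eigenvalue set coming from the $\mathcal{C}$-subsolution that does the work.

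The main obstacle I anticipate is the error terms from the almost complex structure: the identity $\vp_{i\bar j} = e_i\bar e_j(\vp) - [e_i,\bar e_j]^{(0,1)}(\vp)$ means the complex Hessian differs from the naive one by first-order terms, so the passage from $\det(D^2\vp)$ (real Hessian in coordinates) to a bound involving $\prod\lambda_i(\omega_u)$ is not purely algebraic. One controls this by noting that on the contact set $P$ we have $|D\vp|<\ve/2$, so these first-order correction terms contribute at most $O(\ve)$ per entry, which after taking determinants and using $\ve\le C$ (which we may assume, else we are done) are absorbed into the constant — this is precisely the technical refinement carried out in \cite[\S2]{CTW16} and \cite[\S3]{ZJ}, and I would follow that bookkeeping.
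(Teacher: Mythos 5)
Your setup is right through the application of Proposition~\ref{ABP}, but the way you close the argument does not work, and the misstep is precisely at the two places where you depart from the paper.

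First, the constant $\ve$ in the ABP lemma must be taken \emph{small and fixed}, not comparable to $\Lambda$. You claim that $\vp|_{\partial B_1}$ is ``controlled'' so that $\vp(0)+\ve\le\inf_{\partial B_1}\vp$ holds with $\ve\sim\Lambda$; but there is no a priori lower bound on $u-\underline u$ near $\partial B_1(0)$ independent of $\Lambda$ --- that is exactly what we are trying to prove. The only inequality available for free is $u-\underline u\ge m_0$ \emph{everywhere}, which with $v=u-\underline u+\ve|x|^2$ gives $v\ge m_0+\ve$ on $\partial B_1$ and $v(0)=m_0$, i.e.\ the ABP hypothesis holds only for the same small $\ve$ you have added to the test function. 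Moreover $\ve$ cannot be made large: the argument that $\lambda(u)\in(\lambda(\underline u)-\delta\textbf{1}+\Gamma_n)\cap\partial\Gamma^h\subset B_R(0)$ on the contact set requires $C\ve\le\delta$, so $\ve$ is forced to be a fixed small constant depending on the $\mathcal{C}$-subsolution.

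Second, with $\ve$ small and fixed the chain $c_0\ve^{2n}\le\int_P\det(D^2 v)\le C|P|\le C|B_1|$ is a true but vacuous inequality: it gives no information about $m_0$ at all, because neither side depends on $\Lambda$. This is why your claim that ``the $L^1$ bound of Proposition~\ref{L1} is not strictly necessary'' is incorrect. The step that actually produces the bound on $m_0$ is the refinement $|P|\le C/|m_0+\ve|$: on $P$ one has $v(x)\le v(0)+\tfrac{\ve}{2}=m_0+\tfrac\ve2$, hence $-v\ge|m_0+\ve|$ on $P$ (assuming $m_0+\ve\le 0$), and integrating and invoking Proposition~\ref{L1} gives $|P|\le\int_P(-v)/|m_0+\ve|\le C/|m_0+\ve|$. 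Only after plugging this back into $c_0\ve^{2n}\le C|P|$ do you obtain $|m_0+\ve|\le C/(c_0\ve^{2n})$, which is the desired uniform bound. The boundedness of the admissible eigenvalue set supplied by the $\mathcal{C}$-subsolution is what makes $\det(D^2 v)$ bounded on $P$, but it does not by itself connect the magnitude of $m_0$ to anything; the $L^1$ estimate is the indispensable link. You should restore the $\tau=\ve$ fixed-small normalization and the $|P|\le C/|m_0+\ve|$ step, exactly as in \cite{SG} and \cite{CTW16}.
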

\begin{proof}
	From the hypothesis, it suffices to estimate the infimum $m_{0}=\inf_{M}(u-\underline{u})$. We may assume $m_{0}$ is attained at $x_{0}$. Choose a local coordinate chart $(x^{1},\cdots, x^{2n})$ in a neighborhood of $x_{0}$ containing the unit ball $B_{1}(0)\subset \mathbb{R}^{2n}$ such that the coordinates of $x_{0}$ are the origin  $0\in \mathbb{R}^{2n}$.
	
	Consider the test function
	\[
	v:= u-\underline{u}+\ve\sum_{i=1}^{2n}(x^{i})^{2}
	\]
	for a small $\ve>0$ determined later. Then we have
	\[v(0)=m_{0}~ \textrm{and}~v\geq m_{0}+\ve ~\textrm{on}~ \partial B_{1}(0).\]
	We define the lower contact set of $v$ by
	\begin{equation}\label{P}
	P:=\Big\{x\in B_{1}(0): |Dv(x)|\leq \frac{\ve}{2},
	v(y)\geq v(x)+Dv(x)\cdot (y-x),\text{\ for all\ } y\in B_{1}(0)\Big\}.
	\end{equation}
	By Proposition \ref{ABP}, we have
	\begin{equation}\label{3.5}
	c_{0}\ve^{2n}\leq \int_{P}\det(D^{2}v).
	\end{equation}
Let $\Big(D^{2}(u-\underline{u})\Big)^{J}$ be the $J$-invariant part of $\Big(D^{2}(u-\underline{u})\Big)$, i.e.,
	\begin{equation}\label{zero order D2 2}
	\Big(D^{2}(u-\underline{u})\Big)^{J}=\frac{1}{2}(D^{2}(u-\underline{u})+J^{T}D^{2}(u-\underline{u})J),
	\end{equation}
	where $J^{T}$ is the transpose of  $J$. Note that $0\in P$ and $D^{2}v\geq 0$ on $P$. Then we deduce
	\begin{equation}\label{zero estimate D2}
	\left(D^{2}(u-\underline{u})\right)^{J}(x)\geq (D^{2}v)^{J}(x)-C\ve Id\geq -C\ve Id, \text{\ for\ }  x\in P.
	\end{equation}
	Consider the bilinear form $H(v)(X,Y)=\ddbar v(X, JY)$. In fact, we obtain
	\[H(v)(X,Y)(x)=\frac{1}{2}\left(D^{2}v\right)^{J}(x)+E(v)(x),\ \  x\in M ,\]	
	where  $E(v)(x)$ is an error matrix which depends linearly on $Dv(x)$ (see e.g. \cite[P. 443]{TWWY15}).	
	Using $|D(u-\underline{u})|\leq \frac{5\ve}{2}$ on $P$ and \eqref{zero estimate D2}, it follows
	\[
	\begin{split}
	H(u)-H(\underline{u})=&\Big(D^{2}u+E(Du)\Big)^{J}-\Big(D^{2}\underline{u}
	+E(D\underline{u})\Big)^{J}  \\
	=&\Big(D^{2}(u-\underline{u})\Big)^{J}+ \Big(E(D(u-\underline{u}))\Big)^{J}\\
	\geq& -C\ve Id.
	\end{split}
	\]
	Hence
	\[
	\omega_{u}-\omega_{\underline{u}}\geq -C\ve\chi.
	\]
	Therefore, if we choose $\ve$ sufficient small such that $C\ve\leq \delta$, then
	\[
	\lambda(u)\in \lambda(\underline{u})-\delta \textbf{1}+\Gamma_{n}.
	\]
	On the other hand, the equation \eqref{DHYM} implies $\lambda(u)\in \partial\Gamma^{h}.$
	Consequently, \[\lambda(u)\in (\lambda(\underline{u})-\delta \textbf{1}+\Gamma_{n})\cap \partial\Gamma^{h} \subset B_{R}(0)\] for some $R>0$ by the argument (\ref{2..12}). This gives an upper bound for $H(u)$ and hence also for $H(v)-E(Dv)$ on $P$.
	
	Note $\det(A+B)\geq \det(A)+\det(B)$ for positive definite Hermitian matrices $A, B$. Recall the definition of $(D^{2}v)^{J}$ in \eqref{zero order D2 2}. Then on $P$, we have
	\begin{equation}\label{3.10}
	\begin{split}
	\det(D^{2}v)\leq & 2^{2n-1}\det((D^{2}v)^{J}) \\
	=  & 2^{2n-1}\det(H(v)-E(Dv))\leq C.
	\end{split}
	\end{equation}
	Plugging (\ref{3.10}) into (\ref{3.5}), we obtain
	\begin{equation}\label{zero estimates 3}
	c_{0}\ve^{2n}\leq C|P|.
	\end{equation}
	For each $x\in P$, choosing $y=0$ in $\eqref{P}$, we have
	\begin{equation*}
	m_{0}=v(0)\geq v(x)-|Dv(x)||x|\geq v(x)-\frac{\ve}{2}.
	\end{equation*}
	We may and do assume $m_{0}+\ve\leq 0$ (otherwise we are done), then, on $P$,
	\[
	-v\geq |m_{0}+\ve|.
	\]
Integrating it on $P$,   we get
	
	\[|P|\leq \frac{\int_{P}(-v)\chi^{n}}{|m_{0}+\ve|}\leq \frac{C}{|m_{0}+\ve|},\]
	where in the last inequality we used  Proposition \ref{L1}.
	By \eqref{zero estimates 3}, we get a uniform lower bound for $m_{0}$.
\end{proof}

\section{First order estimate}
In this section,we give the proof of the $C^{1}$ estimates. Let $|\nabla u|_{\chi}$ be the norm of gradient $u$ with respect to $\chi$. For convenience, we use $|\nabla u|$ to denote $|\nabla u|_{\chi}.$ We denote $\mathcal{F}=\sum_{i}F^{i\bar{i}}.$
\begin{proposition}\label{estimates of gradient}
	\begin{equation}
	|\nabla u|\leq C
	\end{equation}
	for some constant $C$ depending on $(M ,\chi, J)$, $\omega$, $\|h\|_{C^{1}}$ and $\underline{u}$.
\end{proposition}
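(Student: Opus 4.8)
The plan is to prove the gradient estimate by the classical Blocki/Guan-type maximum principle argument adapted to the almost Hermitian setting, combined with the subsolution machinery of Corollary \ref{Cor2.6}. First I would introduce an auxiliary function of the form
\begin{equation*}
W = \log|\nabla u|^{2} + \phi(u-\underline{u}),
\end{equation*}
where $\phi$ is a function to be chosen (for instance $\phi(t) = -At$ or $\phi(t)=e^{-At}$ with $A$ large), and suppose $W$ attains its maximum at an interior point $x_{0}$. At $x_{0}$ I would choose the unitary frame $\{e_{i}\}$ diagonalizing $\tilde g_{i\bar j}$ as in the preliminaries, so that $F^{i\bar j} = F^{i\bar i}\delta_{ij}$. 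Applying the linearized operator $L$ from \eqref{L} to $W$ and using $LW \le 0$ together with $dW=0$ at $x_{0}$, one gets an inequality whose leading terms come from differentiating the equation \eqref{DHYM} twice: differentiating $F(\omega_u)=h$ once controls the third-order terms $e_i\bar e_i e_k(u)$, and the concavity of $F$ (Lemma \ref{lemma subsolution}(1), via \eqref{the second order derivative of F}) gives the crucial sign on the second derivative terms so the bad third-order contributions can be absorbed.

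The second step is to handle the error terms produced by the almost complex structure and the noncommutativity of the frame: the brackets $[e_i,\bar e_j]^{0,1}$ and torsion of $J$ produce extra first- and lower-order terms. Following the arguments of \cite{CTW16} and \cite{ZJ}, these are controlled by terms that are at worst linear in $|\nabla u|$ times $\mathcal{F}$, plus terms bounded by $\mathcal F \cdot |\nabla u|^2$ coming from $\nabla h$ (here the $C^1$ bound on $h$ enters). The contribution of $\phi(u-\underline{u})$ is where the subsolution is used: by Corollary \ref{Cor2.6}, at $x_0$ either $L(\underline u - u) \ge \theta \mathcal F$, in which case choosing $\phi' = -A$ with $A$ large forces a contradiction because the $-A L(u-\underline u) = AL(\underline u - u) \ge A\theta\mathcal F$ term dominates; or $F^{k\bar k} \ge \theta \mathcal F$ for every $k$, in which case the "good" negative third-order terms $-\sum_k F^{k\bar k}|e_k|\nabla u|^2|^2/|\nabla u|^2$ (from $L\log|\nabla u|^2$) are uniformly elliptic in all directions and beat the bad terms directly. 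Also $\mathcal F > \mathcal K > 0$ by \eqref{2..27} prevents degeneracy.

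The main obstacle I expect is the bookkeeping of the third-order terms together with the torsion/bracket error terms: in the Kähler case $L\log|\nabla u|^2$ produces a clean expression, but here one must commute $e_i,\bar e_i$ past $e_k$ and past the bracket operators, generating curvature-of-$\chi$ terms and torsion-of-$J$ terms that are not obviously lower order. The resolution, as in \cite{CTW16, ZJ}, is that after using the Cauchy-Schwarz inequality to split the cross terms, every such term is either bounded by $C\mathcal F(1+|\nabla u|)$ — hence negligible compared to the $A\theta\mathcal F|\nabla u|^2$ coming from the subsolution case — or absorbed into the negative concavity terms. Finally, combining the two alternatives of Corollary \ref{Cor2.6}, one concludes $|\nabla u|^2(x_0) \le C$, and since $W$ is maximized there and $u$ is already bounded in $L^\infty$ by Proposition \ref{Prop3.2}, the bound $|\nabla u| \le C$ propagates to all of $M$.
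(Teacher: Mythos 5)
Your overall strategy matches the paper's: a maximum principle argument for a test function combining $\log|\nabla u|^{2}$ with a monotone function of $u-\underline{u}$, with the two alternatives of Corollary \ref{Cor2.6} providing the dichotomy, concavity giving the sign of the second derivative $F^{i\bar k,j\bar l}$, and the brackets/torsion absorbed as in \cite{CTW16,ZJ}. However, as written there is a genuine gap in the choice of $\phi$, and one more structural ingredient is missing.

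\textbf{The linear $\phi(t)=-At$ does not work.} At the maximum of $W=\log|\nabla u|^{2}+\phi(u-\underline u)$, the first order condition $dW=0$ gives $e_{i}(|\nabla u|^{2})/|\nabla u|^{2}=-\phi'e_{i}(u-\underline u)$, and $LW\leq 0$ becomes, after substitution,
\begin{equation*}
\frac{L(|\nabla u|^{2})}{|\nabla u|^{2}}+\phi' L(u-\underline u)+\bigl(\phi''-(\phi')^{2}\bigr)F^{i\bar i}|e_{i}(u-\underline u)|^{2}\leq 0.
\end{equation*}
For $\phi=-At$ one has $\phi''-(\phi')^{2}=-A^{2}$; the resulting term $-A^{2}F^{i\bar i}|e_{i}(u-\underline u)|^{2}$ is \emph{negative}, grows quadratically in $A$, and (since $|e_{i}(u-\underline u)|^{2}$ can be comparable to $|\nabla u|^{2}$) is of order $A^{2}\mathcal F|\nabla u|^{2}$. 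This dwarfs the favorable $+A\theta\mathcal F$ term from Case (a) as soon as $A$ and $|\nabla u|$ are large, so the claimed contradiction does not materialize. The single exponential $\phi(t)=e^{-At}$ has $\phi''-(\phi')^{2}=A^{2}e^{-At}(1-e^{-At})$, which is only nonnegative where $u\geq\underline u$ and vanishes on $\{u=\underline u\}$; with the normalization $\sup_{M}(u-\underline u)=0$ from Proposition \ref{Prop3.2} it is actually nonpositive. So neither of your proposed $\phi$'s gives the needed positivity.

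\textbf{What the paper actually does.} The paper takes $\hat Q=e^{\zeta}|\nabla u|^{2}$ with $\zeta=Ae^{B_{1}\eta}$, $\eta=\underline u-u-\inf(\underline u-u)\geq 0$, and — this is the key — it does \emph{not} feed the critical point condition into the cross term. Instead it keeps the cross term $2F^{i\bar i}\mathrm{Re}\{e_{i}(\eta)\bar e_{i}(|\nabla u|^{2})\}/|\nabla u|^{2}$ as is, expands $\bar e_{i}(|\nabla u|^{2})$, and uses Cauchy–Schwarz term by term. The exponential in $\eta$ supplies a positive quadratic $B_{1}(1+\zeta)F^{i\bar i}|e_{i}\eta|^{2}$ that absorbs the $B_{1}\zeta F^{i\bar i}|e_{i}\eta|^{2}$ arising from Cauchy–Schwarz (this is why the second exponential with large $A$ is there: to make $1+\zeta$ comfortably larger than $\zeta$ times the small parameter). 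The piece you cannot get around generically is the term coming from $\bar e_{i}e_{i}(u)\sim\lambda_{i}$, which after Cauchy–Schwarz leaves $\frac{4}{B_{1}|\nabla u|^{2}}F^{i\bar i}\lambda_{i}^{2}$; the paper controls it using the \emph{structural} bound $F^{i\bar i}\lambda_{i}^{2}=\lambda_{i}^{2}/(1+\lambda_{i}^{2})<1$, special to the $\arctan$ operator (see \eqref{the first order derivative of F} and the line after \eqref{last inequality of firt}). This identity is what makes the residual term $O(|\nabla u|^{-2})$ rather than something proportional to $|\nabla^{2}u|^{2}$; it is not a consequence of concavity alone and your sketch does not mention it. Once these two points are fixed — a test function with the right positivity in the $|e_{i}\eta|^{2}$ direction, and the DHYM bound $F^{i\bar i}\lambda_{i}^{2}<1$ to kill the $\lambda_{i}$-weighted cross term — the rest of your outline (concavity, bracket/torsion bookkeeping, the dichotomy of Corollary \ref{Cor2.6}, and $\mathcal F>\mathcal K$) proceeds as you describe.
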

~\\ \textit{Proof.}
Let $\zeta= Ae^{B_{1}\eta}$ where  \[\eta=\underline{u}-u-\inf_{M}(\underline{u}-u)\geq0,\] $A, B_{1}$ are positive constants to be chosen later. Consider the test function
\[\hat{Q}:= e^{\zeta}|\nabla u|^{2}.\]
Suppose $\hat{Q}$ achieves maximum at the point $x_{0}$. Near $x_{0}$, there exists a local unitary frame $\{e_{i}\}_{i=1}^{n}$ (with respect to $\chi$) such that at $x_{0}$, we have
\begin{equation*}
\text{$\chi_{i\bar{j}}=\delta_{ij}$, $\tilde{g}_{i\bar{j}}=\delta_{ij} \tilde{g}_{i\bar{j}}$ and $\tilde{g}_{1\bar{1}}\geq \tilde{g}_{2\bar{2}}\geq\cdots\geq \tilde{g}_{n\bar{n}}$.}
\end{equation*}
%

From now on, we will use the Einstein summation convention, and all the following calculations are done at $x_{0}$. The $C$ below in this section denote the constants that may change from line to line, where $C$ depends on all the allowed data that we determined later.

Recall that $L$ is defined in \eqref{L}. By the maximum principle, it follows that
\begin{equation}\label{3.9}
\begin{split}
0\geq \frac{L(\hat{Q})}{B_1\zeta e^{\zeta}|\nabla u|^{2}}=&\frac{L(|\nabla u|^{2})}{B_1\zeta|\nabla u|^{2}}+\frac{L(e^{\zeta})}{B_1\zeta e^{\zeta}}+2F^{i\bar{i}}\textrm{Re}\Big\{e_{i}(\zeta)
\frac{\bar{e}_{i}(|\nabla u|^{2})}{B_1\zeta|\nabla u|^{2}}\Big\}\\
=&\frac{L(|\nabla u|^{2})}{B_1\zeta|\nabla u|^{2}}+L(\eta)+B_{1}(1+\zeta)F^{i\bar{i}}|e_{i}(\eta)|^{2}\\
&+\frac{2}{|\nabla u|^{2}}\sum_{j}F^{i\bar{i}}\textrm{Re}\Big\{e_{i}(\eta)\bar{e}_{i}e_{j}(u)\bar{e}_{j}(u)
+e_{i}(\eta)\bar{e}_{i}\bar{e}_{j}(u)e_{j}(u)\Big\}.
\end{split}
\end{equation}

Now we deal with  these terms in turn. First we have
\begin{lemma}\label{calculation  of gradient}
	\begin{equation*}
	L(|\nabla u|^{2})\geq 2\sum_{j}\textrm{Re}\{e_{j}(h)\bar{e}_{j}u\}+ (1-\varepsilon) \sum_{j}F^{i\bar{i}}(|e_{i}e_{j}u|^{2}+|e_{i}\bar{e}_{j}u|^{2})- \frac{C}{\varepsilon} \mathcal{F}|\nabla u|^{2}.
	\end{equation*}
\end{lemma}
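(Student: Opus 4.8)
The plan is to compute $L(|\nabla u|^2)$ directly from the definition \eqref{L} and the equation \eqref{DHYM}, keeping track of the third-order terms and absorbing the error contributions coming from the non-integrability of $J$ (the Lie-bracket corrections $[e_i,\bar e_j]^{0,1}$ and the torsion of the frame) into the $\frac{C}{\varepsilon}\mathcal{F}|\nabla u|^2$ term. Writing $|\nabla u|^2 = \sum_j \big(e_j(u)\,\overline{e_j(u)} + \overline{e_j(u)}\,e_j(u)\big)/(\text{normalization})$, or more precisely $|\nabla u|^2 = \sum_j (|e_j u|^2 + \text{c.c.})$ up to the convention fixed in the paper, I would first expand $L(|\nabla u|^2) = \sum_i F^{i\bar i}\big(e_i\bar e_i - [e_i,\bar e_i]^{0,1}\big)|\nabla u|^2$. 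Applying $e_i\bar e_i$ to a product of $u$-derivatives produces, by the Leibniz rule, terms of three types: (a) genuine third-order terms $e_i\bar e_i e_j(u)$ paired with $\bar e_j(u)$; (b) the ``good'' quadratic terms $F^{i\bar i}|e_i e_j u|^2$ and $F^{i\bar i}|e_i\bar e_j u|^2$; and (c) commutator/torsion terms in which a bracket like $[e_i,\bar e_i]$ or $[e_i,e_j]$ hits $u$, each of which is at most first order in $u$ and hence controlled by $|\nabla u|$ times at most one more derivative.

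For the third-order terms (a), the key move is to commute derivatives to express $e_i\bar e_i e_j(u)$ in terms of $e_j(\tilde g_{i\bar i})$ plus lower-order curvature/torsion terms, and then to apply $e_j$ to the equation $\sum_i \arctan\lambda_i = h$. Differentiating \eqref{DHYM} gives $F^{i\bar i} e_j(\tilde g_{i\bar i}) = e_j(h) + (\text{torsion terms of order} \le 1)$, so that $\sum_i F^{i\bar i} e_i\bar e_i e_j(u)\,\overline{e_j(u)}$, after summation over $j$ and taking real parts, reproduces $2\sum_j \mathrm{Re}\{e_j(h)\,\bar e_j u\}$ modulo terms absorbed into $\frac{C}{\varepsilon}\mathcal{F}|\nabla u|^2$. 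The factor $(1-\varepsilon)$ in front of the good second-order terms appears precisely because, when one commutes derivatives and uses Cauchy–Schwarz to bound cross terms of the form $F^{i\bar i}\,|e_i e_j u|\cdot(\text{torsion})\cdot|\nabla u|$, a small fraction $\varepsilon$ of $F^{i\bar i}|e_i e_j u|^2$ must be sacrificed, with the remainder of the cross term going into $\frac{C}{\varepsilon}\mathcal{F}|\nabla u|^2$.

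Concretely, the steps in order would be: (1) fix the unitary frame at $x_0$ with $\tilde g_{i\bar j}=\delta_{ij}\tilde g_{i\bar i}$ as in the surrounding text; (2) expand $L(|\nabla u|^2)$ by Leibniz, isolating the three groups of terms above; (3) handle group (c) trivially by $|[e_i,\bar e_j]^{0,1} u|\le C|\nabla u|$, contributing to $\frac{C}{\varepsilon}\mathcal{F}|\nabla u|^2$ once paired with another first derivative — here one also uses that $\sum_i F^{i\bar i}=\mathcal F$; (4) in group (a), commute $\bar e_i$ past $e_j$ and $e_i$ past the resulting expression, generating $e_j\bar e_i e_i(u) = e_j(\tilde g_{i\bar i}) - e_j(g_{i\bar i}) + (\text{brackets})$, where the $e_j(g_{i\bar i})$ and bracket terms are order $\le 1$ and absorbed; (5) differentiate the equation to replace $F^{i\bar i}e_j(\tilde g_{i\bar i})$ by $e_j(h)$ plus absorbed torsion; (6) collect the surviving good second-order terms, applying Cauchy–Schwarz with weight $\varepsilon$ to the cross terms that arose from commutators hitting a second-order derivative of $u$, which is what costs the $(1-\varepsilon)$ and produces $\frac{C}{\varepsilon}$.

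The main obstacle I expect is the careful bookkeeping of the torsion and curvature terms generated by commuting covariant (or frame) derivatives on the almost Hermitian manifold: unlike the K\"ahler case, $[e_i,\bar e_j]^{0,1}\ne 0$ and one cannot freely interchange $\partial$ and $\bar\partial$, so every commutation step spawns first-order terms that must be shown to be no worse than $C|\nabla u|$ (times, at worst, one more derivative of $u$ that is then split off by Cauchy–Schwarz). The delicate point is ensuring that no term of the form $F^{i\bar i}\lambda_i^2$ or $F^{i\bar i}|e_ie_ju|^2$ survives with a bad sign after these manipulations — i.e. that all genuinely second-order contamination can be controlled by $\varepsilon\sum_j F^{i\bar i}(|e_ie_ju|^2+|e_i\bar e_ju|^2)$ plus $\frac{C}{\varepsilon}\mathcal F|\nabla u|^2$, which is exactly the asserted inequality. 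This is routine but technical; the structure follows \cite{CTW16} and \cite{ZJ}, adapted to the present $F^{i\bar j}$.
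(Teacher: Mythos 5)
Your proposal matches the paper's argument in substance and structure: Leibniz expansion of $L(|\nabla u|^2)$ isolating the good quadratic term $\sum_j F^{i\bar i}(|e_ie_ju|^2+|e_i\bar e_ju|^2)$, commutation of frame derivatives to bring the third-order term onto $e_j(\tilde g_{i\bar i})$, substitution of the once-differentiated equation $F^{i\bar i}e_j(\tilde g_{i\bar i})=e_j(h)-F^{i\bar i}e_j(g_{i\bar i})$, and an $\varepsilon$-weighted Cauchy--Schwarz to absorb the resulting $|\nabla u|\cdot(\text{second derivative})$ cross terms, which is exactly what produces the $(1-\varepsilon)$ factor and the $C/\varepsilon$ penalty. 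The only small imprecision is in step (3), where some of the bracket terms (e.g.\ $e_i[\bar e_i,e_j]u$, $[e_i,e_j]\bar e_i u$, $[e_i,\bar e_i]^{0,1}e_j u$) are genuinely second order in $u$, not first order; you do acknowledge this correctly in steps (4) and (6), so there is no real gap.
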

\begin{proof}
	By direct calculation,
	\begin{equation}\label{calculation gradient}
	L(|\nabla u|^{2})=F^{i\bar{i}}\Big({e_{i}e_{\bar{i}}}(|\nabla u|^{2})-[e_{i},\bar{e}_{i}]^{0,1}(|\nabla u|^{2})\Big):= I+II+III,
	\end{equation}
	where
	\[I=\sum_{j}F^{i\bar{i}}(e_{i}\bar{e}_{i}e_{j}u-[e_{i},\bar{e}_{i}]^{0,1}e_{j}u)\bar{e}_{j}u;\]
	\[II=\sum_{j}F^{i\bar{i}}(e_{i}\bar{e}_{i}\bar{e}_{j}u-[e_{i},\bar{e}_{i}]^{0,1}\bar{e}_{j}u)e_{j}u;\]
	\[III=\sum_{j}F^{i\bar{i}}(|e_{i}e_{j}u|^{2}+|e_{i}\bar{e}_{j}u|^{2}).\]
	Differentiating \eqref{DHYM} along $e_{j}$ without summation, we have
	\[ F^{i\ol{i}}e_{j}(g_{i\ol{i}})+F^{i\bar{i}}(e_{j}e_{i}\bar{e}_{i}u-e_{j}[e_{i},\bar{e}_{i}]^{0,1}u)=e_{j}(h).
	\]
	Recall the definition of Lie bracket $[e_{i},e_{j}]=e_{i}e_{j}-e_{j}e_{i}$. Then we have
	\begin{equation}\label{communate with derivative of third derivative}
	\begin{split}
	&I+II\\
=& 2 \sum_{j}F^{i\bar{i}}\textrm{Re}\Big\{(e_{i}\bar{e}_{i}e_{j}u-[e_{i},\bar{e}_{i}]^{0,1}e_{j}u)\bar{e}_{j}(u)\Big\} \\
	= & 2 \sum_{j}F^{i\bar{i}}\textrm{Re} \Big\{(e_{j}e_{i}\bar{e}_{i}u+e_{i}[\bar{e}_{i},e_{j}]u
	+[e_{i},e_{j}]\bar{e}_{i}u-[e_{i},\bar{e}_{i}]^{0,1}e_{j}u)\bar{e}_{j}(u)\Big\}\\
	= & 2\sum_{j}\textrm{Re}\{e_{j}(h)\bar{e}_{j}(u)\}-2 \sum_{j}F^{i\ol{i}}\textrm{Re}\{e_{j}(g_{i\ol{i}})\bar{e}_{j}(u)\}
+2\sum_{j}\textrm{Re}\{F^{i\bar{i}}e_{j}[e_{i},\bar{e}_{i}]^{0,1}u\bar{e}_{j}(u)\}
	\\
	&+2\textrm{Re}\sum_{j}\Big\{F^{i\bar{i}}\left(e_{i}[\bar{e}_{i},e_{j}]u
	+[e_{i},e_{j}]\bar{e}_{i}u-[e_{i},\bar{e}_{i}]^{0,1}e_{j}u\right)\bar{e}_{j}(u)\Big\}.\\
	\end{split}
\end{equation}
	We may assume $|\nabla u|>1$. It follows
	\begin{equation}\label{4.9}
	\begin{split}
	I+II\geq& 2\sum_{j}\textrm{Re}\{e_{j}(h)\bar{e}_{j}u\}-C|\nabla u|\sum_{j}F^{i\bar{i}}(|e_{i}e_{j}u|+|e_{i}\bar{e}_{j}u|)-C|\nabla u|^{2} \mathcal{F}\\
	\geq &2\sum_{j}\textrm{Re}\{e_{j}(h)\bar{e}_{j}u\}-\frac{C}{\varepsilon}|\nabla u|^{2}\mathcal{F}-\ve\sum_{j} F^{i\bar{i}}(|e_{i}e_{j}u|^{2}+|e_{i}\bar{e}_{j}u|^{2}).
	\end{split}
	\end{equation}
	Combining \eqref{4.9} with \eqref{calculation gradient}, we obtain
	\begin{equation}
	L(|\nabla u|^{2})\geq 2\sum_{j}\textrm{Re}\{e_{j}(h)\bar{e}_{j}u\}+ (1-\varepsilon) \sum_{j}F^{i\bar{i}}(|e_{i}e_{j}u|^{2}+|e_{i}\bar{e}_{j}u|^{2})- \frac{C}{\varepsilon} \mathcal{F}|\nabla u|^{2}.
	\end{equation}
\end{proof}
Using the above Lemma, it follows
\begin{equation}\label{3.7}
\begin{split}
\frac{L(|\nabla u|^{2})}{B_{1}\zeta|\nabla u|^{2}}\geq& \frac{2}{B_1\zeta|\nabla u|^{2}}\sum_{j}\textrm{Re}\{e_{j}(h)\bar{e}_{j}u\}\\&+ (1-\varepsilon) \sum_{j}F^{i\bar{i}}\frac{|e_{i}e_{j}u|^{2}+|e_{i}\bar{e}_{j}u|^{2}}{B_1\zeta|\nabla u|^{2}}- \frac{C}{B_1\zeta\varepsilon} \mathcal{F}.
\end{split}
\end{equation}
Now we estimate the last term of (\ref{3.9}). By the Cauchy-Schwarz inequality, for $0<\ve\leq \frac{1}{2}$, we have
\[
\begin{split}
&2\sum_{j}F^{i\bar{i}}\textrm{Re}\Big\{e_{i}(\eta)
\bar{e}_{i}e_{j}(u)\bar{e}_{j}(u)\Big\}\\
=& 2\sum_{j}F^{i\bar{i}}\textrm{Re}\Big\{e_{i}(\eta)
\bar{e}_{j}(u)\big\{e_{j}\bar{e}_{i}(u)-[e_{j},\bar{e}_{i}]^{0,1}(u)
-[e_{j},\bar{e}_{i}]^{1,0}(u)\big\}\Big\}\\
=&  2F^{i\bar{i}}\lambda_{i}\textrm{Re}\big\{e_{i}(\eta)
\bar{e}_{i}(u)\big\}
-2\sum_{j}F^{i\bar{i}}\textrm{Re}\left\{e_{i}(\eta)
\bar{e}_{j}(u) g_{i\bar{j}}\right\}\\
&	-2\sum_{j}F^{i\bar{i}}\textrm{Re}\big\{e_{i}(\eta)
\bar{e}_{j}(u)[e_{j},\bar{e}_{i}]^{1,0}(u)\big\}\\
\geq&  2F^{i\bar{i}}\lambda_{i}\textrm{Re}\{e_{i}(\eta)
\bar{e}_{i}(u)\}-\varepsilon B_1\zeta|\nabla u|^{2}F^{i\bar{i}}|e_{i}(\eta)|^{2}
-\frac{C}{B_1\zeta\varepsilon}|\nabla u|^{2}\mathcal{F},
\end{split}
\]
where in the last inequality we used $|\nabla u|>1.$
When $0<\varepsilon\leq\frac{1}{2}$, we have $1\leq (1-\varepsilon)(1+2\varepsilon)$. Using the Cauchy-Schwarz inequality again we obtain
\[
\begin{split}
&2\sum_{j}F^{i\bar{i}}\textrm{Re}\Big\{e_{i}(\eta)\bar{e}_{i}\bar{e}_{j}(u)e_{j}(u)\Big\} \\
\geq & -\frac{(1-\varepsilon)}{B_1\zeta}\sum_{j}F^{i\bar{i}}|\bar{e}_{i}\bar{e}_{j}(u)|^{2}
-(1+2\varepsilon)B_1\zeta|\nabla u|^{2}F^{i\bar{i}}|e_{i}(\eta)|^{2}.
\end{split}
\]
Therefore,
\begin{equation}\label{3.12re}
\begin{split}
&\frac{2}{|\nabla u|^{2}}\sum_{j}F^{i\bar{i}}\textrm{Re}\Big\{e_{i}(\eta)\bar{e}_{i}e_{j}(u)\bar{e}_{j}(u)
+e_{i}(\eta)\bar{e}_{i}\bar{e}_{j}(u)e_{j}(u)\Big\}\\
\geq &2F^{i\bar{i}}\lambda_{i}\frac{\textrm{Re}\{e_{i}(\eta)
	\bar{e}_{i}(u)\}}{|\nabla u|^{2}}-(1+3\ve)B_1\zeta F^{i\bar{i}}|e_{i}(\eta)|^{2}
-\frac{C}{B_1\zeta\varepsilon}\mathcal{F}
-(1-\varepsilon)\sum_{j}F^{i\bar{i}}\frac{|\bar{e}_{i}\bar{e}_{j}(u)|^{2}}{B_1\zeta|\nabla u|^{2}}.
\end{split}
\end{equation}
Then, using  \eqref{3.9},  \eqref{3.7} and \eqref{3.12re}, we obtain
\begin{equation}\label{(3).3.7}
\begin{split}
0 \geq & L(\eta)+B_1(1+\zeta)F^{i\bar{i}}|e_{i}(\eta)|^{2}-\frac{2C}{B_1\zeta\varepsilon}\mathcal{F}
+\frac{2}{B_1\zeta|\nabla u|^{2}}\sum_{j}\textrm{Re}\{e_{j}(h)\bar{e}_{j}u\}\\
&+2F^{i\bar{i}}\lambda_{i}\frac{\textrm{Re}\{e_{i}(\eta)
	\bar{e}_{i}(u)\}}{|\nabla u|^{2}}-(1+3\ve)B_1\zeta F^{i\bar{i}}{|e_{i}(\eta)|^{2}}\\
\geq &  L(\eta)+B_1(1-3\ve \zeta) F^{i\bar{i}}|e_{i}(\eta)|^{2}-\frac{2C}{B_1\zeta\varepsilon}\mathcal{F}
-\frac{C}{B_1\zeta|\nabla u|}
+2F^{i\bar{i}}\lambda_{i}\frac{\textrm{Re}\{e_{i}(\eta)
	\bar{e}_{i}(u)\}}{|\nabla u|^{2}}.
\end{split}
\end{equation}
We have $\ve=\frac{1}{12\sup_{x\in M}\zeta(x)}\leq \frac{1}{2}$ if $A$ is big enough. It follows
\begin{equation}
B_1(1-3\ve \zeta) F^{i\bar{i}}|e_{i}(\eta)|^{2}\geq \frac{1}{2}B_1F^{i\bar{i}}|e_{i}(\eta)|^{2}.
\end{equation}
We use the Cauchy-Schwarz inequality to obtain
\begin{equation}\label{(3).3.9}
\begin{split}
F^{i\bar{i}}\lambda_{i}\frac{2\textrm{Re}\{e_{i}(u)\bar{e_{i}}(\eta)\}}{|\nabla u|^{2}} \geq    -\frac{B_{1}}{4}F^{i\bar{i}}|e_{i}(\eta)|^{2}
-\frac{4}{B_1|\nabla u|^{2}}F^{i\bar{i}}\lambda_{i}^{2}.
\end{split}
\end{equation}
Combining with \eqref{(3).3.7}-\eqref{(3).3.9}, we have

\begin{equation}\label{last inequality of firt}
\begin{split}
\frac{B_1}{4}F^{i\bar{i}}|e_{i}(\eta)|^{2}+L(\eta)&\leq \frac{C}{B_1\zeta|\nabla u|}+ \frac{C}{B_1\zeta}\mathcal{F}+\frac{4}{B_1|\nabla u|^{2}}F^{i\bar{i}}\lambda_{i}^{2}\\
&\leq \frac{C}{B_1\zeta|\nabla u|}+ \frac{C}{B_1\zeta}\mathcal{F}+\frac{4n}{B_1|\nabla u|^{2}},
\end{split}
\end{equation}
where in the last inequality we used $F^{i\bar{i}}\lambda_{i}^{2}=\frac{\lambda_{i}^{2}}{1+\lambda_{i}^{2}}<1$ provided by \eqref{the first order derivative of F} for each $i=1,\cdots,n$.

The proof is divided to two cases, where $\lambda=(\lambda_{1},\cdots, \lambda_{n})$.

\textbf{Case (a)}. First, suppose \eqref{2..25} holds, i.e.
\begin{equation}\label{CS}
L(\eta)\geq \theta\mathcal{F}.
\end{equation}
Therefore, using \eqref{sub3} and \eqref{last inequality of firt}, we have
\[\frac{1}{2}\theta\mathcal{K}
+\frac{\theta}{2}\mathcal{F}\leq \theta\mathcal{F}\leq  \frac{C}{B_1\zeta|\nabla u|}+ \frac{C}{B_1\zeta}\mathcal{F}+\frac{4n}{B_1|\nabla u|^{2}}. \]
Note that the terms involving $\mathcal{F}$ can be discarded if $B_1$ big enough. Then we have
\[
\frac{1}{2}\theta\mathcal{K}\leq \frac{C}{B_1\zeta|\nabla u|}+\frac{4}{B_1|\nabla u|^{2}}.
\]
It follows $|\nabla u|\leq C.$

\textbf{Case (b)}. Second, suppose \eqref{2..26} holds. Then, by \eqref{sub3}, we have
\begin{equation}\label{4...15}
1\geq F^{i\bar{i}}\geq \theta\mathcal{F}\geq \theta\mathcal{K}, ~ \textrm{for}~i=1,2,\cdots, n.
\end{equation}
Hence, by \eqref{the first order derivative of F}, we have $|\lambda_{i}|\leq C$ and $F^{i\ol{i}}\leq1.$
It follows
\begin{equation}\label{4.31}
L(\eta)=F^{i\bar{i}}(g_{i\bar{i}}+\underline{u}_{i\bar{i}}-\lambda_{i})\geq -C.
\end{equation}
By \eqref{4...15}, \eqref{4.31}  and \eqref{last inequality of firt}, we obtain
\begin{equation*}
-C+\frac{1}{C}|\nabla \eta|^{2}\leq \frac{C}{B_1\zeta|\nabla u|}+\frac{4n}{B_1|\nabla u|^{2}}+C.
\end{equation*}
We can assume that $|\nabla u|\geq 2|\nabla \underline{u}|$. Hence $|\nabla \eta|\geq \frac{1}{2}|\nabla u|$.
It follows
\begin{equation*}
-C+\frac{1}{C}|\nabla u|^{2}\leq \frac{C}{B_1\zeta|\nabla u|}+\frac{4n}{B_1|\nabla u|^{2}}+C.
\end{equation*}
Therefore,  $|\nabla u|\leq C.$


\section{Second order estimates}
In this section, we prove the following second order estimates.
\begin{theorem}\label{Thm4.1}
There exists a constant $C_{0}>0$ such that
\begin{equation}\label{aa}
\|\nabla^{2}u\|_{C^{0}(M)}\leq C_{0},
\end{equation}
where $C_{0}$ depends on $(M, \chi, J)$, $\omega$, $\|h\|_{C^{2}}$, $\inf_{M} h$ and  $\underline{u}$ and $\nabla$ is the Levi-Civita connection of $\chi$.
\end{theorem}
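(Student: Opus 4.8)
The plan is to apply the maximum principle to a test function of the form
\[
Q = \log \lambda_1(\tilde g) + \phi(|\nabla u|^2) + \psi(\eta),
\]
where $\lambda_1(\tilde g)$ is the largest eigenvalue of $\tilde g_{i\bar j} = g_{i\bar j} + u_{i\bar j}$, $\eta = \underline{u} - u - \inf_M(\underline u - u) \ge 0$, $\phi$ is a carefully chosen increasing concave function (typically $\phi(t) = -\tfrac12 \log(1 - \tfrac{t}{2K})$ for a large constant $K$ bounding $|\nabla u|^2$, which is controlled by Proposition \ref{estimates of gradient}), and $\psi(\eta) = -A_1 \eta + A_2 \eta^2$ or $\psi(\eta) = A e^{-B\eta}$ for large constants. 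Since $\lambda_1$ need not be smooth where eigenvalues collide, I would first perturb it in the standard way (Székelyhidi, \cite{SG}; Chu-Tosatti-Weinkove, \cite{CTW16}): at the maximum point $x_0$, if $\lambda_1$ has multiplicity one I work with it directly, and otherwise I replace $g$ in the test quantity by $g - \sum_{\beta>1} B_{\beta} \theta^\beta \otimes \bar\theta^\beta$-type perturbations so that the perturbed largest eigenvalue is smooth near $x_0$ and agrees with $\lambda_1$ at $x_0$; the extra terms this introduces are controlled and do not affect the leading-order analysis. Choose the unitary frame at $x_0$ so that $\tilde g_{i\bar j} = \delta_{ij}\lambda_i$ with $\lambda_1 \ge \cdots \ge \lambda_n$, and recall from Lemma \ref{lemma subsolution}\,(1) (using $\inf_M h > (n-1)\tfrac\pi2$) that all $\lambda_i > 0$; moreover $\lambda_n$ has a positive lower bound coming from $\inf_M h$, which is what keeps the perturbation arguments and the bound $\lambda_i/(1+\lambda_i^2)<1$ usable.

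The core computation is to apply the linearized operator $L$ (defined in \eqref{L}) to $Q$ at $x_0$, where $L(Q) \le 0$ and $\nabla Q = 0$. Differentiating the equation \eqref{DHYM} twice along $e_1, \bar e_1$ and commuting derivatives produces $L(\lambda_1)$ up to (i) first-order commutator terms involving torsion of the almost-complex structure $J$ and curvature of $\chi$ — these are the genuinely new terms compared to the Kähler case and are handled exactly as in \cite{CTW16} and \cite{ZJ}, absorbing them into $\varepsilon\mathcal F \lambda_1$ plus lower-order contributions — and (ii) the ``bad'' third-order terms $\sum_i F^{i\bar i}|e_i e_1 u|^2/\lambda_1$ type expressions together with $\sum F^{i\bar k,j\bar l} (\nabla_1 \tilde g)(\nabla_{\bar 1}\tilde g)$. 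For the latter I would invoke the concavity of $F$ (Lemma \ref{lemma subsolution}\,(1)) in the quantitative form of Lemma \ref{lower bound of concave} (``a lower bound for the third order terms from concavity'' — referenced in the introduction) to extract a negative term of the form $-\sum_{i>1}\frac{F^{1\bar1}-F^{i\bar i}}{\lambda_1-\lambda_i}\frac{|\nabla_i \tilde g_{1\bar1}|^2}{\lambda_1}$ with the correct sign; combined with the good third-order terms coming from $\phi(|\nabla u|^2)$ (via Lemma \ref{calculation of gradient}, which gives $(1-\varepsilon)\sum F^{i\bar i}(|e_ie_ju|^2+|e_i\bar e_ju|^2)$) these control the bad terms. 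This is the main obstacle: the bookkeeping matching the third-order terms generated by $L(\log\lambda_1)$ against those from $\phi$ and the concavity inequality, in the presence of the almost-complex torsion terms.

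After this cancellation, what survives is an inequality of the schematic form
\[
0 \ge L(Q) \ge -C\mathcal F + \psi'(\eta) L(\eta) + \psi''(\eta)\sum_i F^{i\bar i}|e_i\eta|^2 - C\frac{\mathcal F}{\lambda_1} - (\text{controlled terms}),
\]
and here I split into the two alternatives of Corollary \ref{Cor2.6}. In Case (a), $L(\eta) = L(\underline u - u) \ge \theta\mathcal F$, so choosing $\psi' $ large (i.e. $A_1$ large) makes $\psi'(\eta)\theta\mathcal F$ dominate $C\mathcal F$ and forces $\lambda_1 \le C$. In Case (b), every $F^{i\bar i} \ge \theta \mathcal F \ge \theta\mathcal K$, hence every $\lambda_i$ is already bounded (from $F^{i\bar i} = 1/(1+\lambda_i^2)$), so in particular $\lambda_1 \le C$ directly. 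Either way $\lambda_1(\tilde g)(x_0) \le C$, which bounds $Q$ at its maximum and therefore everywhere; since $|\nabla u|$ and $\eta$ are already bounded, this yields $\lambda_1(\tilde g) \le C$ on $M$, and together with $\tilde g > 0$ and $\mathrm{tr}_\chi \tilde g \le n\lambda_1$ we get $0 < \tilde g \le C\chi$. Finally, passing from the complex Hessian bound to the full real Hessian bound $\|\nabla^2 u\|_{C^0} \le C_0$ uses the standard identity relating $\ddbar u$ to the $J$-invariant part of $D^2 u$ modulo first-order terms (as already invoked in the proof of Proposition \ref{Prop3.2}, via $H(v)(X,Y) = \tfrac12(D^2v)^J + E(v)$) together with the gradient bound to control the non-$J$-invariant part of $D^2u$; this closes the estimate.
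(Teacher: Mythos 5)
There is a genuine gap in the final step, and it is exactly the issue that motivates the paper's (and CTW16's) choice of test function. You bound $\lambda_1(\tilde g)$, the largest eigenvalue of the complex $(1,1)$-Hessian $\tilde g_{i\bar j}=g_{i\bar j}+u_{i\bar j}$, and then claim that together with the gradient bound this controls the full real Hessian $\nabla^2 u$. This is false on an almost Hermitian manifold. A bound on $\tilde g$ (both sides, using $\lambda_i>0$ and $\lambda_1\le C$) controls only the $J$-invariant part $(D^2u)^J$ of the real Hessian, via $H(u)=\tfrac12(D^2u)^J+E(Du)$. The $J$-anti-invariant part — the $(2,0)+(0,2)$ components, i.e.\ the quantities $u_{jk}=e_je_ku-(\nabla_{e_j}e_k)u$ and their conjugates — is a genuine second-order object which is \emph{not} bounded by $|\nabla u|$ and is \emph{not} determined by $u_{i\bar j}$. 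The gradient estimate gives first-order control, not second-order; asserting it controls $(D^2u)^{aJ}$ has no justification. This is precisely why the paper does \emph{not} work with $\lambda_1(\tilde g)$: it works with $\mu_1(\nabla^2 u)$, the largest eigenvalue of the real Hessian itself, and uses the observation $\sum_\beta\mu_\beta=\Delta u=\sum_i\lambda_i+\tau(du)\ge-C$ (coming from $\lambda_i>0$) to get $|\nabla^2u|\le C\mu_1(\nabla^2 u)+C$, so that bounding $\mu_1$ from above suffices. This reduction, which you cannot reproduce from a $(1,1)$-Hessian bound, is where the real content lies. In fact, a secondary problem with your route is that even the computation of $L(\log\lambda_1(\tilde g))$ on an almost complex manifold produces commutator terms involving $u_{jk}$ in a way that is not obviously absorbable; CTW16 discuss exactly this obstruction for the Monge--Amp\`ere equation.

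A smaller but real issue is the sign of your auxiliary function. You take $\psi(\eta)=-A_1\eta+A_2\eta^2$ or $\psi(\eta)=Ae^{-B\eta}$ with $\eta=\underline u-u-\inf_M(\underline u-u)\ge0$; both have $\psi'<0$ near $\eta=0$. In the alternative \eqref{2..25} of Corollary \ref{Cor2.6} you have $L(\eta)=L(\underline u-u)\ge\theta\mathcal F>0$, so $\psi'(\eta)L(\eta)<0$ is a \emph{bad} term, not the dominating good term you invoke. You need $\psi$ increasing in $\underline u-u$ (as the paper's $\varphi(\tilde\eta)=e^{B\tilde\eta}$ is, with $\tilde\eta=\underline u-u+\sup_M(u-\underline u)+1$), so that $\varphi'L(\tilde\eta)\ge Be^{B\tilde\eta}\theta\mathcal F$ wins against $-C\mathcal F$. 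The rest of your outline — the perturbation to make the top eigenvalue smooth, the use of Lemma \ref{lemma subsolution} and Lemma \ref{lower bound of concave} for concavity, the split into the alternatives of Corollary \ref{Cor2.6}, and the positive lower bound on $\lambda_n$ from $\inf_M h>(n-1)\pi/2$ — does track the paper's structure; but it must be carried out with $\mu_1(\nabla^2 u)$ rather than $\lambda_1(\tilde g)$, and the conversion step must be deleted because it is not available.
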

~\\
Let $\mu_{1}(\nabla^{2} u)\geq \cdots\geq\mu_{2n}(\nabla^{2}u)$ be the  eigenvalues of $\nabla^{2}u$ with respect to $\chi.$
By \eqref{psh}, we have
$\sum_{\beta=1}^{2n}\mu_{\beta}=\Delta u=\Delta^{\mathbb{C}}u+\tau(du)=\sum_{i}\lambda_{i}+\tau(du)\geq \tau(du)\geq -C,$
(see \cite[(2.5)]{CTW16}).
Then $\mu_{2n}\geq -C\mu_{1}-C,$ which implies
\begin{equation*}
|\nabla^{2}u|_{g}\leq C\mu_{1}(\nabla^{2}u)+C,
\end{equation*}
for a uniform constant $C$. 
 Hence, it suffices to give an upper bound for $\mu_{1}$. First, we consider the function
\[
\tilde{Q}:= \log \mu_{1}(\nabla^{2}u)+\phi(|\nabla u|^{2})+\varphi(\widetilde{\eta})
\]
on $\Omega:=\{\mu_{1}(\nabla^{2}u)>0\}\subset M$. Here $\varphi$ is a function defined by
\[
\varphi(\widetilde{\eta}):= e^{B\widetilde{\eta}},
~~\widetilde{\eta}:=\underline{u}-u+\sup_{M}(u-\underline{u})+1
\]
for a real constant $B>0$ to be determined later, and $\phi$ is defined by
\[
\phi(s):=-\frac{1}{2}\log(1+\sup_{M}|\nabla u|^{2}-s).
\]
Set $K=1+\sup_{M}|\nabla u|^{2}$. Note that
\begin{equation}\label{property test function}
   \frac{1}{2K}\leq \phi'(|\nabla u|^{2})\leq \frac{1}{2},~~\phi''=2(\phi')^{2}.
\end{equation}
We may assume $\Omega$ is a nonempty open set (otherwise we are done). Note that when $z$ approaches to $\partial{\Omega}$, then $Q(z)\ri -\infty$.
Suppose $Q$ achieves a maximum at $x_{0}$ in $\Omega$.
Near $x_{0}$, choose a local unitary frame $\{e_{i}\}_{i=1}^{n}$ (with respect to $\chi$) such that at $x_{0}$,
\begin{equation}\label{5..6}
\text{$\chi_{i\overline{j}}=\delta_{ij}$, $\tilde{g}_{i\overline{j}}=\delta_{ij}\tilde{g}_{i\overline{j}}$ and $\tilde{g}_{1\overline{1}}\geq\tilde{g}_{2\overline{2}}\geq\cdots\geq\tilde{g}_{n\overline{n}}$.}
\end{equation}
For convenience, we denote $\tilde{g}_{i\overline{i}}(x_{0})$ by $\lambda_{i}$. In addition, there exists a normal coordinate system $(U,\{x^{\alpha}\}_{i=1}^{2n})$ in a neighbourhood of $x_{0}$ such that
\begin{equation}\label{real frame and complex frame}
e_{i}=\frac{1}{\sqrt{2}}(\partial_{2i-1}-\sqrt{-1}\partial_{2i}) \text{~for~} i=1,2,\cdots,n
\end{equation}
and
\begin{equation}\label{first derivative of background metric}
\frac{\partial \chi_{\alpha\beta}}{\partial x^{\gamma}}=0 \text{~for~} \alpha,\beta,\gamma=1,2,\cdots,2n
\end{equation}
at $x_{0}$, where $\chi_{\alpha\beta}=\chi(\partial_{\alpha}, \partial_{\beta}).$

Suppose that $V_{1},\cdots, V_{2n}$ are $\chi$-unit  eigenvectors for $\Phi$ at $x_{0}$ with eigenvalues $$\mu_{1}(\nabla^{2}u)\geq \cdots\geq\mu_{2n}(\nabla^{2} u),$$ respectively.

Assume $V_{\alpha}=V_{\alpha}^{\beta}\partial_{\beta}$ at $x_{0}$ and extend vectors $V_{\alpha}$ to  vector fields  on $U$ by taking the components $V_{\alpha}^{\beta}$ to be constant.
Since $\mu_{1}(\nabla^{2}u)$ may not be smooth, we apply a perturbation argument as in  \cite{CTW16,SG}.  On $U$, define
\begin{equation}\label{Definition of Phi}
\begin{split}
\Phi & = \Phi_{\alpha}^{\beta}~\frac{\partial}{\partial x^{\alpha}}\otimes dx^{\beta}\\
& = (g^{\alpha\gamma}u_{\gamma\beta}-g^{\alpha\gamma}B_{\gamma\beta})\frac{\partial}{\partial x^{\alpha}}\otimes dx^{\beta},
\end{split}
\end{equation}
where $B_{\gamma\beta}=\delta_{\gamma\beta}-V_{1}^{\gamma}V_{1}^{\beta}$.  Assume that  $\mu_{1}(\Phi)\geq\mu_{2}(\Phi)\geq\cdots\geq\mu_{2n}(\Phi)$  are  the eigenvalues of $\Phi$.  Then  $V_{1},V_{2},\cdots,V_{2n}$ are still eigenvectors of $\Phi$, corresponding to eigenvalues $\mu_{1}(\Phi),\mu_{2}(\Phi),\cdots,\mu_{2n}(\Phi)$ at $x_{0}$.   Note that $\mu_{1}(\Phi)(x_{0})>\mu_{2}(\Phi)(x_{0})$, which implies $\mu_{1}(\Phi)$ is smooth near $x_{0}$.   On $U$, we replace $\tilde{Q}$ by  the following smooth quantity
\begin{equation*}
Q=\log\mu_{1}(\Phi)+\phi(|\nabla u|^{2})+\varphi(\widetilde{\eta}).
\end{equation*}
Since $\mu_{1}(\nabla^{2}u)(x_{0})=\mu_{1}(\Phi)(x_{0})$ and $\mu_{1}(\nabla^{2} u)\geq\mu_{1}(\Phi)$, $x_{0}$ is still the maximum point of $\hat{Q}$. For convenience,  we denote $\mu_{\alpha}(\Phi)$ by $\mu_{\alpha}$ for $\alpha=1,2,\cdots,2n$.


The proof needs the first and second derivatives of the first eigenvalue $\mu_{1}$ at $x_{0}$ (See \cite[Lemma 5.2]{CTW16} or \cite{Spr,SG}).
\begin{lemma}\label{the derivative of eigenvalues}
	At $x_{0}$, we have
	\begin{equation}
	\begin{split}
	                              \frac{\partial \mu_{1}}{\partial \Phi^{\alpha}_{\beta} }&=V_{1}^{\alpha}V_{1}^{\beta};\\
\frac{\partial^{2} \mu_{1}}{\partial \Phi^{\alpha}_{\beta}\partial \Phi^{\gamma}_{\delta}}&
=\sum_{\kappa>1}\frac{1}{\mu_{1}-\mu_{\kappa}}(V_{1}^{\alpha}V_{\kappa}^{\beta}V_{\kappa}^{\gamma}V_{1}^{\delta}
	+V_{\kappa}^{\alpha}V_{1}^{\beta}V_{1}^{\gamma}V_{\kappa}^{\delta}).
	\end{split}
	\end{equation}
\end{lemma}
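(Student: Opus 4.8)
\textit{Proof plan.}
The plan is to invoke standard first--order perturbation theory for a \emph{simple} eigenvalue, as in \cite[Lemma 5.2]{CTW16} (see also \cite{Spr,SG}). The whole point of having replaced $\nabla^2 u$ by the perturbed endomorphism $\Phi$ (with $B_{\gamma\beta}=\delta_{\gamma\beta}-V_1^{\gamma}V_1^{\beta}$) is that $\mu_1(\Phi)(x_0)>\mu_2(\Phi)(x_0)$; hence $\mu_1$ is a simple root of the characteristic polynomial of $\Phi(x_0)$ and therefore a smooth (indeed real--analytic) function of the entries $\Phi^{\alpha}_{\beta}$ in a neighbourhood of $\Phi(x_0)$. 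Throughout I work at $x_0$, where \eqref{first derivative of background metric} gives $\chi_{\alpha\beta}=\delta_{\alpha\beta}$: then $\Phi$ is represented there by a symmetric matrix, $\{V_{\kappa}\}_{\kappa=1}^{2n}$ is an orthonormal eigenbasis with $\Phi V_{\kappa}=\mu_{\kappa}V_{\kappa}$, and raising/lowering indices is trivial, so for a variation $H=H^{\alpha}_{\beta}$ of the entries one has $\langle V_1,H V_{\kappa}\rangle = V_1^{\alpha}H^{\alpha}_{\beta}V_{\kappa}^{\beta}$.

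For the first derivative, take a smooth curve $\Phi(t)$ of endomorphisms with $\Phi(0)=\Phi(x_0)$, $\dot\Phi(0)=H$, and let $V_1(t)$ be the corresponding smooth unit eigenvector for $\mu_1(t)$, so $\Phi(t)V_1(t)=\mu_1(t)V_1(t)$ and $\langle V_1(t),V_1(t)\rangle=1$. Differentiating at $t=0$ and pairing with $V_1$, the $\dot V_1$ term cancels because $\langle V_1,\Phi\dot V_1\rangle=\langle\Phi V_1,\dot V_1\rangle=\mu_1\langle V_1,\dot V_1\rangle$ (using that $\Phi$ is symmetric at $x_0$), leaving $\dot\mu_1=\langle V_1,H V_1\rangle=V_1^{\alpha}V_1^{\beta}H^{\alpha}_{\beta}$; since $H$ is arbitrary this gives $\partial\mu_1/\partial\Phi^{\alpha}_{\beta}=V_1^{\alpha}V_1^{\beta}$.

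For the second derivative I first compute $\dot V_1$. From $(\Phi-\mu_1)\dot V_1=(\dot\mu_1 I-\dot\Phi)V_1$, projecting onto $V_{\kappa}$ with $\kappa>1$ gives $\langle V_{\kappa},\dot V_1\rangle=\frac{\langle V_{\kappa},H V_1\rangle}{\mu_1-\mu_{\kappa}}$, while differentiating $\langle V_1,V_1\rangle=1$ gives $\langle V_1,\dot V_1\rangle=0$; hence $\dot V_1=\sum_{\kappa>1}\frac{\langle V_{\kappa},H V_1\rangle}{\mu_1-\mu_{\kappa}}V_{\kappa}$. Now choose the linear path $\Phi(t)=\Phi(x_0)+tH$, so $\ddot\Phi=0$; differentiating $\dot\mu_1=\langle V_1,H V_1\rangle$ once more and using symmetry of $H$ yields $\ddot\mu_1=2\langle V_1,H\dot V_1\rangle=2\sum_{\kappa>1}\frac{\langle V_1,H V_{\kappa}\rangle\langle V_{\kappa},H V_1\rangle}{\mu_1-\mu_{\kappa}}$. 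Substituting $\langle V_1,H V_{\kappa}\rangle=V_1^{\alpha}H^{\alpha}_{\beta}V_{\kappa}^{\beta}$ and $\langle V_{\kappa},H V_1\rangle=V_{\kappa}^{\gamma}H^{\gamma}_{\delta}V_1^{\delta}$, and reading off the symmetric part in $(\alpha\beta)\leftrightarrow(\gamma\delta)$ of the resulting quadratic form in $H$, gives exactly $\partial^2\mu_1/\partial\Phi^{\alpha}_{\beta}\partial\Phi^{\gamma}_{\delta}=\sum_{\kappa>1}\frac{1}{\mu_1-\mu_{\kappa}}(V_1^{\alpha}V_{\kappa}^{\beta}V_{\kappa}^{\gamma}V_1^{\delta}+V_{\kappa}^{\alpha}V_1^{\beta}V_1^{\gamma}V_{\kappa}^{\delta})$.

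There is essentially no obstacle here: once simplicity of $\mu_1$ at $x_0$ is secured — which the construction of $\Phi$ guarantees — the computation is the classical one, and the only thing demanding care is the bookkeeping of index positions (since $\Phi$ is an endomorphism rather than a symmetric bilinear form) and the normalisation $\langle V_1,V_1\rangle=1$, which kills the $V_1$--component of $\dot V_1$. Alternatively one may differentiate the implicit system $\det(\Phi-\mu_1 I)=0$ together with the eigenvector relation, or simply cite \cite[Lemma 5.2]{CTW16}; I would include the short argument above for the reader's convenience.
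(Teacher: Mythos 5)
Your proof is correct and follows the standard first--order perturbation computation for a simple eigenvalue, which is exactly what the paper has in mind: the paper does not prove the lemma but simply refers the reader to \cite[Lemma 5.2]{CTW16}, \cite{Spr}, \cite{SG}, and your argument is essentially the one found there.

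One small point worth tightening. In your second--derivative computation you pass from $\ddot\mu_1=\langle \dot V_1,HV_1\rangle+\langle V_1,H\dot V_1\rangle$ to $\ddot\mu_1=2\langle V_1,H\dot V_1\rangle$ by \emph{assuming $H$ symmetric}. Since $\Phi^{\alpha}_{\beta}$ are $(2n)^2$ independent real variables, the Hessian is not uniquely determined by its values on symmetric variations alone: any additional piece antisymmetric in $\alpha\beta$ (or in $\gamma\delta$) would be invisible to such tests. To fully establish the formula as an identity between derivatives of $\mu_1$ with respect to general matrix entries, one should either (i) carry out the perturbation computation at the symmetric point $\Phi(x_0)$ using left and right eigenvectors $W_1,V_1$ with $W_1V_1=1$ (at a symmetric point $W_1=V_1^{T}$, and the calculation closes with the same answer for arbitrary $H$), or (ii) observe that in the paper's use, in the chain rule of \eqref{5..13}, the Hessian is only ever contracted against $e_i(\Phi^{\gamma}_{\delta})\bar e_i(\Phi^{\alpha}_{\beta})$, and thanks to \eqref{first derivative of background metric} one has $e_i(\Phi^{\gamma}_{\delta})(x_0)=e_i(u_{\gamma\delta})(x_0)$, which is symmetric, so only the totally symmetric part of the Hessian enters anyway. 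Either remark closes the gap; the stated formula and the rest of your argument are correct.
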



 At $x_{0}$, for each $i=1,2, \cdots, n$, we have
\begin{equation}\label{5..10}
\frac{1}{\mu_{1}}e_{i}(\mu_{1})=-\phi'e_{i}(|\nabla u|^{2})-Be^{B\widetilde{\eta}}e_{i}(\widetilde{\eta})
\end{equation}
and
\begin{equation}\label{(2)3.4}
\begin{split}
0\geq L(Q)=& \frac{L(\mu_{1})}{\mu_{1}}- F^{i\bar{i}}\frac{|e_{i}(\mu_{1})|^{2}}{\mu_{1}^{2}}  +\phi'' F^{i\bar{i}}|e_{i}(|\nabla u|^{2})|^{2} \\
      &+\phi'L(|\nabla u|^{2})+Be^{B\widetilde{\eta}}L(\widetilde{\eta})+B^{2}e^{B\widetilde{\eta}} F^{i\bar{i}}|e_{i}(\widetilde{\eta})|^{2}.
\end{split}
\end{equation}
\subsection{Lower bound for $L(Q)$}
In this subsection, we calculation  $L(Q)$.
\begin{lemma}\label{Lma4.2}
For $\ve\in (0,\frac{1}{2}]$, at $x_{0}$, we have
\begin{equation}\label{4.7"}
\begin{split}
L(Q) \geq & (2-\ve)\sum_{\beta>1}F^{i\bar{i}}\frac{|e_{i}(u_{V_{\beta}V_{1}})|^{2}}{\mu_{1}(\mu_{1}-\mu_{\beta})}
-\frac{1}{\mu_{1}} F^{i\bar{k},j\bar{l}}V_{1}(\tilde{g}_{i\bar{k}})V_{1}(\tilde{g}_{j\bar{l}})\\&
-(1+\ve)F^{i\bar{i}}\frac{|e_{i}(\mu_{1})|^{2}}{\mu_{1}^{2}}
-\frac{C}{\ve}\mathcal{F}
+ \frac{\phi'}{2}\sum_{j} F^{i\bar{i}}(|e_{i}e_{j}u|^{2}+|e_{i}\bar{e}_{j}u|^{2})\\&+\phi'' F^{i\bar{i}}|e_{i}(|\nabla u|^{2})|^{2}+Be^{B\widetilde{\eta}}L(\widetilde{\eta})+B^{2}e^{B\widetilde{\eta}} F^{i\bar{i}}|e_{i}(\widetilde{\eta})|^{2}.\\
\end{split}
\end{equation}
\end{lemma}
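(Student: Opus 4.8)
The plan is to start from the inequality \eqref{(2)3.4} coming from the maximum principle at $x_0$, and to produce a lower bound for each of the terms $\frac{L(\mu_1)}{\mu_1}$, $\phi' L(|\nabla u|^2)$ on the right-hand side, keeping the remaining terms (those with $\phi''$, $Be^{B\widetilde\eta}L(\widetilde\eta)$ and $B^2e^{B\widetilde\eta}F^{i\bar i}|e_i(\widetilde\eta)|^2$) untouched since they already appear verbatim in \eqref{4.7"}. For $\phi' L(|\nabla u|^2)$ I would invoke Lemma \ref{calculation of gradient}: since $\phi'\le \frac12$, the bad term $-\frac{C}{\ve}\mathcal{F}|\nabla u|^2$ absorbs into $-\frac{C}{\ve}\mathcal{F}$ (using the $C^1$ bound from Proposition \ref{estimates of gradient}), the cross term $2\sum_j\mathrm{Re}\{e_j(h)\bar e_j u\}$ is bounded by $C$ hence again absorbed into $\frac{C}{\ve}\mathcal{F}$ after noting $\mathcal{F}\ge\mathcal{K}>0$ by \eqref{2..27}, and the good term $(1-\ve)\phi' F^{i\bar i}(|e_ie_ju|^2+|e_i\bar e_ju|^2)$ is $\ge \frac{\phi'}{2}F^{i\bar i}(|e_ie_ju|^2+|e_i\bar e_ju|^2)$ for $\ve\le\frac12$.

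The main work is the term $\frac{L(\mu_1)}{\mu_1}$. Here I would differentiate the formula $\mu_1=\Phi^\alpha_\beta V_1^\alpha V_1^\beta$ (Lemma \ref{the derivative of eigenvalues}) twice along $e_i,\bar e_i$ and sum against $F^{i\bar i}$. Using the chain rule with the second-derivative formula for $\mu_1$ in Lemma \ref{the derivative of eigenvalues}, the second-order term produces exactly $2\sum_{\beta>1}F^{i\bar i}\frac{|e_i(u_{V_\beta V_1})|^2}{\mu_1-\mu_\beta}$ (after dividing by $\mu_1$ this becomes the first term of \eqref{4.7"}, up to the $-\ve$ slack used to absorb commutator errors); the first-order term, $F^{i\bar i}V_1^\alpha V_1^\beta (e_i\bar e_i-[e_i,\bar e_i]^{0,1})(\Phi^\alpha_\beta)$, must be related to $L$ applied to the equation \eqref{DHYM}. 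Differentiating $F(\omega_u)=h$ twice in the direction $V_1$ and commuting derivatives (this is where the almost-complex torsion terms $[e_i,\bar e_j]^{0,1}$, $[e_i,\bar e_j]^{1,0}$ and the covariant derivatives of $\chi$ enter, contributing $O(\mu_1)$ errors times $\mathcal{F}$, hence $-\frac{C}{\ve}\mathcal{F}$ after dividing by $\mu_1$ and using $\mu_1\ge 1$) yields
\[
L(\mu_1)\ge -F^{i\bar k,j\bar l}V_1(\tilde g_{i\bar k})V_1(\tilde g_{j\bar l}) + V_1V_1(h) + 2\sum_{\beta>1}F^{i\bar i}\frac{|e_i(u_{V_\beta V_1})|^2}{\mu_1-\mu_\beta} - C\mu_1\mathcal{F} - \ve\,(\cdots),
\]
and $V_1V_1(h)$ is bounded by $\|h\|_{C^2}$, hence absorbed.

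Finally I would collect terms: after dividing by $\mu_1$, the concavity Hessian term $-\frac{1}{\mu_1}F^{i\bar k,j\bar l}V_1(\tilde g_{i\bar k})V_1(\tilde g_{j\bar l})$ appears as in \eqref{4.7"}, the good third-order quantity appears with constant $(2-\ve)$ after an $\ve$-sacrifice, and the term $-F^{i\bar i}\frac{|e_i(\mu_1)|^2}{\mu_1^2}$ from \eqref{(2)3.4} is kept but must be slightly weakened to $-(1+\ve)F^{i\bar i}\frac{|e_i(\mu_1)|^2}{\mu_1^2}$ to make room for a Cauchy–Schwarz step needed when converting $|e_i(u_{V_\beta V_1})|$-type commutator errors (arising from differentiating the extended constant vector fields $V_\alpha$ and from $B_{\gamma\beta}$) into the allowed terms. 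Combining everything gives \eqref{4.7"}. The principal obstacle is bookkeeping the torsion/commutator terms in the almost-Hermitian setting — ensuring every error is genuinely $O(\mu_1)\cdot\mathcal{F}$ or $O(1)$ (so it can be dominated by $\frac{C}{\ve}\mathcal{F}$ via $\mathcal{F}\ge\mathcal{K}$) rather than $O(\mu_1^2)$ — and correctly matching the second-order $\mu_1$-derivative with the $2\sum_{\beta>1}$ good term while keeping signs straight.
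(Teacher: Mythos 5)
Your proposal is correct and follows essentially the same route as the paper's proof: start from the maximum-principle inequality \eqref{(2)3.4}, lower-bound $L(\mu_1)$ via Lemma \ref{the derivative of eigenvalues} and a double differentiation of \eqref{DHYM} along $V_1$ (this is what the paper's Claim~1 and \eqref{5..15} accomplish), lower-bound $\phi'L(|\nabla u|^2)$ via Lemma \ref{calculation of gradient} together with the $C^1$ bound and \eqref{2..27}, and finally use Cauchy--Schwarz on the commutator errors $[V_1,e_i]V_1\bar e_i(u)$, $[V_1,\bar e_i]V_1 e_i(u)$ — which is precisely the paper's Claim~2 — to produce the $\ve$-sacrifices that turn the constants $2$ and $1$ into $2-\ve$ and $1+\ve$. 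The only minor imprecision is attributing commutator errors to ``differentiating the extended constant vector fields $V_\alpha$ and $B_{\gamma\beta}$'' — since those are extended with constant components their derivatives vanish, and the real source is the nonvanishing Lie brackets $[V_1,e_i]$ in the almost Hermitian setting — but this does not affect the argument.
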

 First, we calculate $L(\mu_{1})$. Let $u_{ij}=e_{i}e_{j}u-(\nabla_{e_{i}}e_{j})u $ and $u_{V_{\alpha}V_{\beta}}=u_{\gamma\delta}V^{\gamma}_{\alpha}V^{\delta}_{\beta}$.

 By Lemma \ref{the derivative of eigenvalues} and \eqref{first derivative of background metric}, we have
\begin{equation}\label{5..13}
\begin{split}
L(\mu_{1})=&F^{i\bar{i}}\frac{\partial^{2} \mu_{1}}{\partial \Phi^{\alpha}_{\beta}\partial \Phi^{\gamma}_{\delta}}
e_{i}(\Phi^{\gamma}_{\delta})\bar{e}_{i}(\Phi^{\alpha}_{\beta})+F^{i\bar{i}}\frac{\partial \mu_{1}}{\partial
	\Phi^{\alpha}_{\beta}}(e_{i}\bar{e}_{i}-[e_{i},\bar{e}_{i}]^{0,1})(\Phi^{\alpha}_{\beta})\\
=&F^{i\bar{i}}\frac{\partial^{2} \mu_{1}}{\partial \Phi^{\alpha}_{\beta}\partial \Phi^{\gamma}_{\delta}}
e_{i}(u_{\gamma\delta})\bar{e}_{i}(u_{\alpha\beta})+F^{i\bar{i}}\frac{\partial \mu_{1}}{\partial
	\Phi^{\alpha}_{\beta}}(e_{i}\bar{e}_{i}-[e_{i},\bar{e}_{i}]^{0,1})(u_{\alpha\beta})\\
&+F^{i\bar{i}}\frac{\partial \mu_{1}}{\partial
	\Phi^{\alpha}_{\beta}}u_{\gamma\beta}e_{i}\bar{e}_{i}(\chi^{\alpha\gamma})\\
\geq & 2\sum_{\beta>1}F^{i\bar{i}}\frac{|e_{i}(u_{V_{\beta}V_{1}})|^{2}}{\mu_{1}-\mu_{\beta}}
+ F^{i\bar{i}}(e_{i}\bar{e}_{i}-[e_{i},\bar{e}_{i}]^{0,1})(u_{V_{1}V_{1}})-C\mu_{1}\mathcal{F},\\
\end{split}
\end{equation}
where $(\chi^{\alpha\beta})$ is the inverse  of the  matrix $(\chi_{\alpha\beta})$. Let $W$ be a vector field.
Differentiating the equation \eqref{DHYM}, we obtain
\begin{equation}\label{the first detivative of equation}
F^{i\bar{i}}W(\tilde{g}_{i\bar{i}})=W(h)
\end{equation}
and
\begin{equation}\label{5..15}
F^{i\bar{i}}V_{1}V_{1}(\tilde{g}_{i\bar{i}})=- F^{i\bar{k},j\bar{l}}V_{1}(\tilde{g}_{i\bar{k}})V_{1}(\tilde{g}_{j\bar{l}})+V_{1}V_{1}(h).
\end{equation}
Commuting the derivatives and using Proposition \ref{estimates of gradient}, we obtain, for any vector field $W$,
\begin{equation}\label{acting the first derivative}
|L(W(u))|\leq C+C\mu_{1}\mathcal{F}.
\end{equation}
~\\
\textbf{Claim 1.} If $\mu_{1}\gg 1$, then
\begin{equation}\label{claim2}
\begin{split}
 F^{i\bar{i}}(e_{i}\bar{e}_{i}-[e_{i},\bar{e}_{i}]^{0,1})(u_{V_{1}V_{1}})
 \geq   & - F^{i\bar{k},j\bar{l}}V_{1}(\tilde{g}_{i\bar{k}})V_{1}(\tilde{g}_{j\bar{l}})-C\mu_{1}\mathcal{F}\\&-2 F^{i\bar{i}}\Big\{[V_{1},\bar{e}_{i}]V_{1}e_{i}(u)+[V_{1},e_{i}]V_{1}\bar{e}_{i}(u)\Big\}. \\
\end{split}
\end{equation}
\textit{Proof.}
By \eqref{acting the first derivative}, we have
\begin{equation}\label{communate fourth derivative}
\begin{split}
    & F^{i\bar{i}}(e_{i}\bar{e}_{i}-[e_{i},\bar{e}_{i}]^{0,1})(u_{V_{1}V_{1}})\\
   =& F^{i\bar{i}}e_{i}\bar{e}_{i}(V_{1}V_{1}(u)-(\nabla_{V_{1}}V_{1})u)
   -F^{i\bar{i}}[e_{i},\bar{e}_{i}]^{0,1}(V_{1}V_{1}(u)-(\nabla_{V_{1}}V_{1})u)\\
   \geq & F^{i\bar{i}}e_{i}\bar{e}_{i}(V_{1}V_{1}(u))-F^{i\bar{i}}[e_{i},\bar{e}_{i}]^{0,1}V_{1}V_{1}(u)-C\mu_{1}\mathcal{F}-C.
 \end{split}
\end{equation}
 Recall the definition of Lie bracket $[e_{i}, e_{j}]=e_{i}e_{j}-e_{j}e_{i}.$
Then  we get (for more details, see \cite[(5.12)]{CTW16})
 \begin{equation*}
 \begin{split}
 & F^{i\overline{i}}e_{i}\overline{e}_{i}V_{1}V_{1}(\varphi)-F^{i\overline{i}}[e_{i},\overline{e}_{i}]^{(0,1)}V_{1}V_{1}(\varphi) \\
 \geq ~~& F^{i\overline{i}}\left(V_{1}e_{i}\overline{e}_{i}V_{1}(\vp)+[e_{i},V_{1}]\overline{e}_{i}V_{1}(\vp)
 -[V_{1},\overline{e}_{i}]e_{i}V_{1}(\vp)-V_{1}V_{1}[e_{i},\overline{e}_{i}]^{(0,1)}(\vp)\right)\\
 & -C\mu_{1}\mathcal{F}\\
 \geq ~~&F^{i\overline{i}}V_{1}V_{1}\left(e_{i}\overline{e}_{i}(\varphi)-[e_{i},\overline{e}_{i}]^{(0,1)}(\varphi)\right)
 -2F^{i\overline{i}}[V_{1},e_{i}]V_{1}\overline{e}_{i}(\vp)\\
 & -2F^{i\overline{i}}[V_{1},\overline{e}_{i}]V_{1}e_{i}(\vp)-C\mu_{1}\mathcal{F}.
 \end{split}
 \end{equation*}
Combining with \eqref{2..27}, \eqref{5..15} and \eqref{communate fourth derivative}, the Claim 1 follows if $\mu_{1}\gg 1$.

\qed
~\\

Combining the equalities (\ref{5..13}) and (\ref{claim2}) together, it follows that
\begin{equation}\label{(2)3.9}
\begin{split}
L(\mu_{1})\geq & 2\sum_{\beta>1}F^{i\bar{i}}\frac{|e_{i}(u_{V_{\beta}V_{1}})|^{2}}{\mu_{1}-\mu_{\beta}}
-F^{i\bar{k},j\bar{l}}V_{1}(\tilde{g}_{i\bar{k}})V_{1}(\tilde{g}_{j\bar{l}})
\\&-2F^{i\bar{i}}\textrm{Re}\Big\{[V_{1},e_{i}]V\bar{e}_{i}(u)
+[V_{1},\bar{e}_{i}]Ve_{i}(u)\Big\}-C\mu_{1}\mathcal{F}.
\end{split}
\end{equation}
By \eqref{2..27}, Lemma \ref{calculation  of gradient} and Proposition \ref{estimates of gradient}, we have
\begin{equation}\label{(2)3.10}
L(|\nabla u|^{2})\geq  \frac{1}{2}\sum_{j} F^{i\bar{i}}(|e_{i}e_{j}u|^{2}+|e_{i}\bar{e}_{j}u|^{2})- C\mathcal{F}.
\end{equation}
Substituting \eqref{(2)3.9} and \eqref{(2)3.10} into \eqref{(2)3.4}, we obtain
\begin{equation}\label{4.10-1}
\begin{split}
L(Q)
\geq & 2\sum_{\beta>1}F^{i\bar{i}}\frac{|e_{i}(u_{V_{\beta}V_{1}})|^{2}}{\mu_{1}(\mu_{1}-\mu_{\beta})}
-\frac{1}{\mu_{1}} F^{i\bar{k},j\bar{l}}V_{1}(\tilde{g}_{i\bar{k}})V_{1}(\tilde{g}_{j\bar{l}})+B^{2}e^{B\widetilde{\eta}} F^{i\bar{i}}|e_{i}(\widetilde{\eta})|^{2}
\\&+Be^{B\widetilde{\eta}}L(\widetilde{\eta})-2 F^{i\bar{i}}\frac{\textrm{Re}\{[V_{1},e_{i}]V_{1}\bar{e}_{i}(u)+[V_{1},\bar{e}_{i}]V_{1}e_{i}(u)\}}{\mu_{1}}
-C\mathcal{F}\\
&-F^{i\bar{i}}\frac{|e_{i}(\mu_{1})|^{2}}{\mu_{1}^{2}}
+ \frac{\phi'}{2}\sum_{j} F^{i\bar{i}}(|e_{i}e_{j}u|^{2}+|e_{i}\bar{e}_{j}u|^{2})+\phi'' F^{i\bar{i}}|e_{i}(|\nabla u|^{2})|^{2}.\\
\end{split}
\end{equation}
Now we deal with the third order derivatives of the right hand side of \eqref{4.10-1}.
~\\
\textbf{Claim 2.} For any $\ve\in (0,\frac{1}{2}]$, we have
\begin{equation}\label{4.10}
\begin{split}
   & 2 F^{i\bar{i}}\frac{\textrm{Re}\{[V_{1},e_{i}]V_{1}\bar{e}_{i}(u)+[V_{1},\bar{e}_{i}]V_{1}e_{i}(u)\}}{\mu_{1}}\\
    \leq & \ve F^{i\bar{i}}\frac{|e_{i}(u_{V_{1}V_{1}})|^{2}}{\mu_{1}^{2}}+
    \ve\sum_{\beta>1}F^{i\bar{i}}\frac{|e_{i}(u_{V_{\beta}V_{1}})|^{2}}{\mu_{1}(\mu_{1}-\mu_{\beta})}
    +\frac{C}{\ve}\mathcal{F}.
\end{split}
\end{equation}
\begin{proof}
Asume 
\[
[V_{1},e_{i}]=\sum_{\beta=1}^{2n} \mu_{i\beta}V_{\beta},~[V_{1},\bar{e}_{i}]=\sum_{\beta=1}^{2n} \overline{\mu_{i\beta}}V_{\beta},
\]
where $\mu_{i\beta}\in\mathbb{C}$ are constants.
Thus,
\begin{equation}\label{three dv}
\textrm{Re}\{[V_{1},e_{i}]V_{1}\bar{e}_{i}(u)+[V_{1},\bar{e}_{i}]V_{1}e_{i}(u)\}\leq C\sum_{\beta=1}^{2n}|V_{\beta}V_{1}e_{i}(u)|.
\end{equation}
Then we are reduced to estimating $\underset{\beta}{\sum} F^{i\bar{i}}\frac{|V_{\beta}V_{1}e_{i}(u)|}{\mu_{1}}$.
Using the definition of Lie bracket $e_{i}e_{j}-e_{j}e_{i}=[e_{i},e_{j}]$, we have
\[
\begin{split}
  \big|V_{\beta}V_{1}e_{i}(u)\big|= &\big|e_{i}V_{\beta}V_{1}(u)+V_{\beta}[V_{1},e_{i}](u)+[V_{\beta},e_{i}]V_{1}(u)\big|  \\
    =& \big|e_{i}(u_{V_{\beta}V_{1}})+e_{i}(\nabla_{V_{\beta}}V_{1})(u)+V_{\beta}[V_{1},e_{i}](u)
    +[V_{\beta},e_{i}]V_{1}(u)\big|\\
    \leq &\big|e_{i}(u_{V_{\beta}V_{1}})\big|+C\mu_{1}.
\end{split}
\]
Therefore,
\begin{equation}\label{third order derivative 1}
\begin{split}
    \sum_{\beta} F^{i\bar{i}}\frac{|V_{\beta}V_{1}e_{i}(u)|}{\mu_{1}}
    \leq &  \sum_{\beta} F^{i\bar{i}}\frac{|e_{i}(u_{V_{\beta}V_{1}})|}{\mu_{1}}+C\mathcal{F}\\
     = & F^{i\bar{i}}\frac{|e_{i}(u_{V_{1}V_{1}})|}{\mu_{1}}+\sum_{\beta>1} F^{i\bar{i}}\frac{|e_{i}(u_{V_{\beta}V_{1}})|}{\mu_{1}}+C\mathcal{F}.\\
\end{split}
\end{equation}
By the Cauchy-Schwarz inequality, for $\ve\in (0,\frac{1}{2}]$, we derive
\[
F^{i\bar{i}}\frac{|e_{i}(u_{V_{1}V_{1}})|}{\mu_{1}}\leq \ve F^{i\bar{i}}\frac{|e_{i}(u_{V_{1}V_{1}})|^{2}}{\mu_{1}^{2}}+\frac{C}{\ve}\mathcal{F}
\]
and
\[
\begin{split}
\sum_{\beta>1} F^{i\bar{i}}\frac{|e_{i}(u_{V_{\beta}V_{1}})|}{\mu_{1}}\leq & \ve F^{i\bar{i}}\frac{|e_{i}(u_{V_{\beta}V_{1}})|^{2}}{\mu_{1}(\mu_{1}-\mu_{\beta})}
+\sum_{\beta>1}\frac{\mu_{1}-\mu_{\beta}}{\ve\mu_{1}}\mathcal{F} \\
    \leq & \ve\sum_{\beta>1} F^{i\bar{i}}\frac{|e_{i}(u_{V_{\beta}V_{1}})|^{2}}{\mu_{1}(\mu_{1}-\mu_{\beta})}
    +\frac{C}{\ve}\mathcal{F},
\end{split}
\]
where the last inequality we used
$\sum_{\beta=1}^{2n}\mu_{\beta}=\Delta u=\Delta^{\mathbb{C}}u+\tau(du)\geq -C+\tau(du)\geq -C$ (see \cite[(2.5)]{CTW16}).
Here  $\tau$ is the torsion vector field of $(\chi, J)$ (the dual of its Lee form, see
e.g. \cite[Lemma 3.2]{TV07}).
Combining with the above three inequalities, we have
\[
\sum_{\beta} F^{i\bar{i}}\frac{|V_{\beta}V_{1}e_{i}(u)|}{\mu_{1}}\leq
\ve F^{i\bar{i}}\frac{|e_{i}(u_{V_{1}V_{1}})|^{2}}{\mu_{1}^{2}}+
\ve\sum_{\beta>1} F^{i\bar{i}}\frac{|e_{i}(u_{V_{\beta}V_{1}})|^{2}}{\mu_{1}(\mu_{1}-\mu_{\beta})}
+\frac{C}{\ve}\mathcal{F}.
\]
Then by \eqref{three dv}, it follows (\ref{4.10}).
\end{proof}
Consequently, Lemma \ref{Lma4.2} follows from (\ref{4.10-1}) and (\ref{4.10}). Now we continue to prove Theorem \ref{Thm4.1}.
\subsection{Proof of Theorem \ref{Thm4.1}} The proof can be divided into three cases.
\textbf{Case 1:} \begin{equation}\label{4.12}
F^{n\bar{n}}\leq B^{3}e^{2B\widetilde{\eta}(0)}F^{1\bar{1}}.\end{equation}

In this case, we can choose $\ve=\frac{1}{2}$.  Using the elemental inequality $|a+b|^{2}\leq 4|a|^{2}+\frac{4}{3}|b|^{2}$ for (\ref{5..10}), we get
\begin{equation}\label{third order 1}
-(1+\ve)F^{i\bar{i}}\frac{|e_{i}(\mu_{1})|^{2}}{\mu_{1}^{2}}\geq
-6\sup_{M}(|\nabla \widetilde{\eta}|^{2})B^{2}e^{2B\widetilde{\eta}}\mathcal{F}-2(\phi')^{2} F^{i\bar{i}}|e_{i}(|\nabla u|^{2})|^{2}.
\end{equation}
Plugging \eqref{third order 1} and \eqref{property test function} into (\ref{4.7"}), we get,
\begin{equation}\label{(2)3.17}
\begin{split}
L(Q)  \geq & (2-\ve)\sum_{\beta>1}F^{i\bar{i}}\frac{|e_{i}(u_{V_{\beta}V_{1}})|^{2}}{\mu_{1}(\mu_{1}-\mu_{\beta})}
-\frac{1}{\mu_{1}} F^{i\bar{k},j\bar{l}}V_{1}(\tilde{g}_{i\bar{k}})V_{1}(\tilde{g}_{j\bar{l}})
\\&-\Big(\frac{C}{\ve}+6\sup_{M}\{|\nabla \widetilde{\eta}|^{2}\}B^{2}e^{2B\widetilde{\eta}}\Big)\mathcal{F}
+ \frac{\phi'}{2}\sum_{j} F^{i\bar{i}}(|e_{i}e_{j}u|^{2}+|e_{i}\bar{e}_{j}u|^{2})\\&
+Be^{B\widetilde{\eta}}L(\widetilde{\eta})+B^{2}e^{B\widetilde{\eta}} F^{i\bar{i}}|e_{i}(\widetilde{\eta})|^{2}\\
\geq & -\Big(\frac{C}{\ve}+6\sup_{M}\{|\nabla \widetilde{\eta}|^{2}\}B^{2}e^{2B\widetilde{\eta}}\Big)\mathcal{F}
+ \frac{\phi'}{2}\sum_{j} F^{i\bar{i}}(|e_{i}e_{j}u|^{2}+|e_{i}\bar{e}_{j}u|^{2})\\&
+Be^{B\widetilde{\eta}}L(\widetilde{\eta}),
\end{split}
\end{equation}
where the last inequality we used the concavity of $F$.
Note $F^{i\bar{i}}\leq 1$. We have
\begin{equation}\label{lower bound of Leta}
\begin{split}
L(\widetilde{\eta})=&\sum_{i}F^{i\bar{i}}(\underline{u}_{i\bar{i}}-u_{i\bar{i}})
=\sum_{i}F^{i\bar{i}}(g_{i\bar{i}}+\underline{u}_{i\bar{i}}-\tilde{g}_{i\bar{i}})\\
\geq &-C-\sum_{i}F^{i\bar{i}}\tilde{g}_{i\bar{i}}
=-C-\sum_{i}\frac{\lambda_{i}}{1+\lambda_{i}^{2}}
\geq -C.
\end{split}
\end{equation}
Then, by \eqref{2..26}, we have
\begin{equation}\label{z}
0\geq L(Q)\geq \frac{\phi'}{2}\sum_{j} F^{i\bar{i}}(|e_{i}e_{j}u|^{2}+|e_{i}\bar{e}_{j}u|^{2})-C_{B}\mathcal{F}.\end{equation}
Here $C_{B}$ are positive constants depending on $B$ {which might different from line to line}.
By (\ref{4.12}), we have
\[
\sum_{i,j}(|e_{i}e_{j}u|^{2}+|e_{i}\bar{e}_{j}u|^{2})\leq C_{B}.
\]
Then the complex covariant derivatives
\[
u_{ij}=e_{i}e_{j}u-(\nabla_{e_{i}}e_{j})u, ~u_{i\bar{j}}=e_{i}\bar{e}_{j}u-(\nabla_{e_{i}}\bar{e}_{j})u
\]
satisfy
\[
\sum_{i,j}(|u_{ij}|^{2}+|u_{i\bar{j}}|^{2})\leq C_{B}.
\]


\textbf{Case 2:}
\begin{equation}\label{5.15}
\frac{\phi'}{4}\sum_{j} F^{i\bar{i}}(|e_{i}e_{j}u|^{2}+|e_{i}\bar{e}_{j}u|^{2})>6\sup_{M}(|\nabla \widetilde{\eta}|^{2})B^{2}e^{2B\widetilde{\eta}}\mathcal{F}.\end{equation}

Note that \eqref{(2)3.17} is still true. By \eqref{5.15} and \eqref{(2)3.17}, we  have
\begin{equation}\label{4.19}
\begin{split}
L(Q)  
 \geq & \frac{\phi'}{4}\sum_{j} F^{i\bar{i}}(|e_{i}e_{j}u|^{2}+|e_{i}\bar{e}_{j}u|^{2})
-\frac{C}{\ve}\mathcal{F}
+Be^{B\widetilde{\eta}}L(\widetilde{\eta}).
\end{split}
\end{equation}
~\\
(a). If (\ref{2..25}) holds, $L(\widetilde{\eta})\geq \theta\mathcal{F}$.  Then, by (\ref{4.19}), we have
\[
0 \geq \frac{\phi'}{4}\sum_{j} F^{i\bar{i}}(|e_{i}e_{j}u|^{2}+|e_{i}\bar{e}_{j}u|^{2})+\Big(\theta Be^{B\widetilde{\eta}}-\frac{C}{\ve}\Big)\mathcal{F}.
\]
This yields a contradiction if we further assume $B$ is large enough.
~\\
(b). If (\ref{2..26}) holds, then $F^{k\bar{k}}\geq \theta\mathcal{F}, k=1,2,\cdots,n.$
Then by (\ref{4.19}) and \eqref{lower bound of Leta} we obtain
\[
\begin{split}
  0\geq&\frac{\phi'}{4}\sum_{j} F^{i\bar{i}}(|e_{i}e_{j}u|^{2}+|e_{i}\bar{e}_{j}u|^{2})-C_{B}\mathcal{F} \\
    \geq & \frac{\phi'}{4}\theta\mathcal{F}\sum_{i,j} (|e_{i}e_{j}u|^{2}+|e_{i}\bar{e}_{j}u|^{2})-C_{B}\mathcal{F}.
\end{split}
\]
Therefore,
\[\sum_{i,j}(|e_{i}e_{j}u|^{2}+|e_{i}\bar{e}_{j}u|^{2})\leq C_{B}.\]
\bigskip

\textbf{Case 3:} If the Case 1  and Case 2 do not hold, define the index set
\begin{equation}\label{}
I:=\Big\{1\leq i\leq n:~ F^{n\bar{n}}\geq B^{3}e^{2B\widetilde{\eta}}F^{i\bar{i}}\Big\}.
\end{equation}
Clearly, we have $1\in I$ and $n\not\in I$. Hence we may write $I=\{1,2,\cdots, p\}$ for some positive integer $p<n$. Now we deal with the third order term.
\begin{lemma}\label{partial third order}
	Assume $B\geq 6n\sup_{M}|\nabla \widetilde{\eta}|^{2}$. At $x_{0}$, we have
\begin{equation}
-(1+\ve)\sum_{i\in I}F^{i\bar{i}} \frac{|e_{i}(\mu_{1})|^{2}}{\mu_{1}^{2}}\geq -\mathcal{F}-2(\phi')^{2}\sum_{i\in I}F^{i\bar{i}}|e_{i}(|\nabla u|^{2})|.
\end{equation}
\end{lemma}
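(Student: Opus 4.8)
The plan is to feed the gradient identity \eqref{5..10} into an elementary Cauchy--Schwarz splitting and then to use the defining property of the index set $I$ --- namely that $F^{i\bar i}$ is extremely small for $i\in I$ --- to absorb the resulting ``bad'' term into $\mathcal{F}$.

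First I would write \eqref{5..10} as $\mu_{1}^{-1}e_{i}(\mu_{1})=a_{i}+b_{i}$ with $a_{i}=-\phi' e_{i}(|\nabla u|^{2})$ and $b_{i}=-Be^{B\widetilde{\eta}}e_{i}(\widetilde{\eta})$. Applying the elementary inequality $|a+b|^{2}\leq \tfrac{4}{3}|a|^{2}+4|b|^{2}$ and using $\ve\in(0,\tfrac12]$ (so that $(1+\ve)\tfrac{4}{3}\leq2$ and $4(1+\ve)\leq6$) gives, for each $i$,
\[-(1+\ve)\frac{|e_{i}(\mu_{1})|^{2}}{\mu_{1}^{2}}\ \geq\ -2(\phi')^{2}|e_{i}(|\nabla u|^{2})|^{2}-6B^{2}e^{2B\widetilde{\eta}}|e_{i}(\widetilde{\eta})|^{2}.\]
Multiplying by $F^{i\bar i}\geq0$ and summing over $i\in I$, the lemma is reduced to the claim
\[6B^{2}e^{2B\widetilde{\eta}}\sum_{i\in I}F^{i\bar i}|e_{i}(\widetilde{\eta})|^{2}\ \leq\ \mathcal{F}.\]

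To prove this claim I would invoke the definition of $I$: for every $i\in I$ one has $F^{n\bar n}\geq B^{3}e^{2B\widetilde{\eta}}F^{i\bar i}$, i.e. $F^{i\bar i}\leq B^{-3}e^{-2B\widetilde{\eta}}F^{n\bar n}$. Hence, using also $|I|\leq n$, the pointwise bound $|e_{i}(\widetilde{\eta})|^{2}\leq\sup_{M}|\nabla\widetilde{\eta}|^{2}$, the standing hypothesis $B\geq 6n\sup_{M}|\nabla\widetilde{\eta}|^{2}$, and $F^{n\bar n}\leq\sum_{i}F^{i\bar i}=\mathcal{F}$,
\[6B^{2}e^{2B\widetilde{\eta}}\sum_{i\in I}F^{i\bar i}|e_{i}(\widetilde{\eta})|^{2}\ \leq\ \frac{6F^{n\bar n}}{B}\sum_{i\in I}|e_{i}(\widetilde{\eta})|^{2}\ \leq\ \frac{6n\sup_{M}|\nabla\widetilde{\eta}|^{2}}{B}\,F^{n\bar n}\ \leq\ F^{n\bar n}\ \leq\ \mathcal{F},\]
which finishes the proof.

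The argument is essentially bookkeeping and I do not expect a genuine obstacle. The one point to get right --- and the reason the threshold $B^{3}e^{2B\widetilde{\eta}}$ in the definition of $I$ is chosen exactly that way --- is that the dangerous term $6B^{2}e^{2B\widetilde{\eta}}|e_{i}(\widetilde{\eta})|^{2}$ carries the large factor $B^{2}e^{2B\widetilde{\eta}}$, which is beaten by the even larger factor $B^{3}e^{2B\widetilde{\eta}}$ sitting in $F^{i\bar i}\leq B^{-3}e^{-2B\widetilde{\eta}}F^{n\bar n}$; the residual $1/B$ is then killed by the lower bound imposed on $B$. Care is also needed to send $\phi'$ and $Be^{B\widetilde{\eta}}$ to the correct sides of the Cauchy--Schwarz splitting, so that the $|\nabla u|^{2}$-term emerges with coefficient exactly $2(\phi')^{2}$ as in the statement.
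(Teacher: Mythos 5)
Your proposal is correct and is essentially identical to the paper's own proof: split $\mu_1^{-1}e_i(\mu_1)$ via \eqref{5..10}, apply the weighted Cauchy--Schwarz inequality $|a+b|^2\leq\tfrac{4}{3}|a|^2+4|b|^2$ to push the $(1+\ve)$ factor (with $\ve\leq\tfrac12$) into the coefficients $2$ and $6$, then use the defining inequality $F^{i\bar i}\leq B^{-3}e^{-2B\widetilde\eta}F^{n\bar n}$ for $i\in I$, together with $|I|\leq n$, $|e_i(\widetilde\eta)|^2\leq\sup_M|\nabla\widetilde\eta|^2$, and the hypothesis $B\geq 6n\sup_M|\nabla\widetilde\eta|^2$, to absorb the $\widetilde\eta$-term into $F^{n\bar n}\leq\mathcal F$. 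One remark: the lemma as printed writes $|e_i(|\nabla u|^2)|$ without a square, which is a typographical slip; your derivation (and the paper's own proof and subsequent use of the lemma) correctly produces $|e_i(|\nabla u|^2)|^2$.
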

\begin{proof}
Using (\ref{5..10}) and the inequality $|a+b|^{2}\leq 4|a|^{2}+\frac{4}{3}|b|^{2}$, we obtain
\[
\begin{split}
&-(1+\ve)\sum_{i\in I} F^{i\bar{i}}\frac{|e_{i}(\mu_{1})|^{2}}{\mu_{1}^{2}}\\
=&-(1+\ve)\sum_{i\in I} F^{i\bar{i}}|\phi'e_{i}(|\nabla u|^{2})+Be^{B\widetilde{\eta}}e_{i}(\widetilde{\eta})|^{2}\\
\geq & -6\sup_{M}|\nabla \widetilde{\eta}|^{2}B^{2}e^{2B\widetilde{\eta}}\sum_{i\in I} F^{i\bar{i}}-2(\phi')^{2}\sum_{i\in I} F^{i\bar{i}}|e_{i}(|\nabla u|^{2})|^{2}\\
\geq & -6n\sup_{M}|\nabla \widetilde{\eta}|^{2}B^{-1} F^{n\bar{n}}-2(\phi')^{2}\sum_{i\in I} F^{i\bar{i}}|e_{i}(|\nabla u|^{2})|^{2}\\
\geq &-\mathcal{F}-2(\phi')^{2}\sum_{i\in I}F^{i\bar{i}}|e_{i}(|\nabla u|^{2})|^{2},
\end{split}
\]
where we used the hypothesis $B\geq 6n\sup_{M}|\nabla \widetilde{\eta}|^{2}$ in the last second inequality.
\end{proof}

%
To deal with the bad third order terms,  we need to give a lower bound of the good third order terms from the concavity of the equation \eqref{DHYM}.

\begin{lemma}\label{lower bound of concave}
	 We have
	\begin{equation}
	\begin{split}
	&-\frac{1}{\mu_{1}} F^{i\bar{k},j\bar{l}}V_{1}(\tilde{g}_{i\bar{k}})V_{1}(\tilde{g}_{j\bar{l}})\\
	\geq &
	\frac{2}{\mu_{1}}\sum_{i\not\in I, k\in I}F^{i\bar{i}}\tilde{g}^{k\bar{k}}|V_{1}(\tilde{g}_{i\bar{k}})|^{2}
	+\frac{C_{B}}{\ve\mu_{1}^{4}}\sum_{i\not\in I,k\not\in I}F^{i\bar{i}}{|V_{1}(\tilde{g}_{i\bar{k}})|^{2}},\\
	\end{split}
	\end{equation}
where $(\tilde{g}^{k\bar{l}})$ is the inverse of the matrix $(\tilde{g}_{i\bar{j}}).$
\end{lemma}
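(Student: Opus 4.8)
The plan is to exploit the explicit form of the second-derivative tensor $F^{i\bar k,j\bar l}$ recorded in \eqref{the second order derivative of F}--\eqref{Definition of Fijkl}, which shows that the only nonzero entries are the diagonal ones $F^{i\bar i,i\bar i}=-2\lambda_i/(1+\lambda_i^2)^2$ and the off-diagonal pairs $F^{i\bar k,k\bar i}=-(\lambda_i+\lambda_k)/((1+\lambda_i^2)(1+\lambda_k^2))$ with $i\neq k$. Writing $a_{i\bar k}:=V_1(\tilde g_{i\bar k})$ (a Hermitian matrix, so $a_{k\bar i}=\overline{a_{i\bar k}}$), the quadratic form expands as
\begin{equation*}
-F^{i\bar k,j\bar l}a_{i\bar k}a_{j\bar l}
= \sum_i\frac{2\lambda_i}{(1+\lambda_i^2)^2}|a_{i\bar i}|^2
+\sum_{i\neq k}\frac{\lambda_i+\lambda_k}{(1+\lambda_i^2)(1+\lambda_k^2)}|a_{i\bar k}|^2 .
\end{equation*}
All terms are nonnegative because $\lambda_i>0$ by \eqref{psh} (this is exactly where $\inf_M h>(n-1)\pi/2$ is used), so the task is merely to retain enough of the off-diagonal sum and bound the retained coefficients from below. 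First I would discard the diagonal sum and the off-diagonal terms with $i,k$ both in $I$, keeping only the cross terms with $i\notin I$, $k\in I$ (counted with the symmetric partner $k\notin I$, $i\in I$, which gives the factor $2$) and the terms with $i,k$ both outside $I$.

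For the mixed block $i\notin I$, $k\in I$: I need the lower bound $\dfrac{\lambda_i+\lambda_k}{(1+\lambda_i^2)(1+\lambda_k^2)}\geq \dfrac{1}{\mu_1}\,F^{i\bar i}\,\tilde g^{k\bar k}$. Since $\tilde g^{k\bar k}=1/\lambda_k$ and $F^{i\bar i}=1/(1+\lambda_i^2)$, this reduces to $\lambda_k(\lambda_i+\lambda_k)\geq (1+\lambda_k^2)/\mu_1$, i.e. $\mu_1\lambda_k(\lambda_i+\lambda_k)\geq 1+\lambda_k^2$; because $\lambda_i+\lambda_k\geq\lambda_k$ and (from the $C^1$ bound plus $\lambda_k\le\lambda_1\le C\mu_1$ together with $\lambda_k$ bounded below when $k\in I$) one checks $\mu_1\lambda_k^2\geq 1+\lambda_k^2$ for $\mu_1$ large, so this is automatic. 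For the block $i,k\notin I$: here $\lambda_i,\lambda_k$ can be large (comparable to $\mu_1$), so I use $\lambda_i+\lambda_k\ge \lambda_k$ if needed but more simply $F^{i\bar i}=1/(1+\lambda_i^2)$ and the crude bounds $\lambda_i+\lambda_k\ge$ (a positive constant) together with $1+\lambda_k^2\le C\mu_1^2$ to get $\dfrac{\lambda_i+\lambda_k}{(1+\lambda_i^2)(1+\lambda_k^2)}\geq \dfrac{c}{\mu_1^3}F^{i\bar i}$; inserting the factor $1/\mu_1$ already present in the statement and absorbing the $1/\varepsilon$ and $B$-dependence into $C_B$ yields the claimed $\dfrac{C_B}{\varepsilon\mu_1^4}\sum_{i,k\notin I}F^{i\bar i}|V_1(\tilde g_{i\bar k})|^2$ (the large power $\mu_1^{-4}$ is harmless and is in fact wasteful but sufficient).

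The main obstacle is bookkeeping rather than conceptual: one must be careful that the matrix $V_1(\tilde g_{i\bar k})$ is Hermitian so that $|V_1(\tilde g_{i\bar k})|^2=|V_1(\tilde g_{k\bar i})|^2$ and the symmetrization $i\leftrightarrow k$ is legitimate, and one must track exactly which terms are thrown away — in particular the positivity of every discarded term, which rests entirely on $\lambda_i>0$. The only genuinely quantitative point is verifying the two coefficient inequalities above, and both follow from the already-established $C^0$ and $C^1$ bounds (so that the $\lambda_i$ for $i\in I$ are bounded and $\lambda_i\le C\mu_1$ in general, using $\sum_i\lambda_i\le \Delta u+C\le C\mu_1+C$) together with the definition of $I$. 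I would present the computation by first displaying the full expansion of $-F^{i\bar k,j\bar l}a_{i\bar k}a_{j\bar l}$, then noting all summands are $\ge 0$, then keeping the two groups and estimating their coefficients, citing \eqref{the first order derivative of F}, \eqref{Definition of Fijkl} and \eqref{psh}.
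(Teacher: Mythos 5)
Your skeleton matches the paper's: expand $-F^{i\bar k,j\bar l}V_1(\tilde g_{i\bar k})V_1(\tilde g_{j\bar l})$ into the diagonal sum $\sum_i 2\lambda_i(1+\lambda_i^2)^{-2}|V_1(\tilde g_{i\bar i})|^2$ plus the off-diagonal sum $\sum_{i\neq k}(\lambda_i+\lambda_k)(1+\lambda_i^2)^{-1}(1+\lambda_k^2)^{-1}|V_1(\tilde g_{i\bar k})|^2$, note all summands are nonnegative since $\lambda_i>0$, symmetrize to pick up the factor $2$ on the $(i\notin I,\,k\in I)$ block, and bound the retained coefficients from below. However, your treatment of the mixed block contains a factor-of-$\mu_1$ error that hides the one genuinely nontrivial step.

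The overall $1/\mu_1$ prefactor in the lemma appears on \emph{both} sides of the first term, so after cancellation the required coefficient inequality for $(i,k)$ with $i\notin I$, $k\in I$ is
\begin{equation*}
\frac{\lambda_i+\lambda_k}{(1+\lambda_i^2)(1+\lambda_k^2)}\ \geq\ F^{i\bar i}\tilde g^{k\bar k}\ =\ \frac{1}{(1+\lambda_i^2)\lambda_k},
\end{equation*}
with \emph{no} factor of $1/\mu_1$. This reduces to $\lambda_k(\lambda_i+\lambda_k)\geq 1+\lambda_k^2$, i.e.\ $\lambda_i\lambda_k\geq 1$. That is \emph{not} automatic for $\mu_1$ large; it is exactly where the hypercritical phase condition enters. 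The paper proves it via $\arctan\lambda_j+\arctan\lambda_n\geq h-(n-2)\frac{\pi}{2}\geq \frac{\pi}{2}$, hence $\lambda_j\lambda_n\geq 1$ for all $j$, which gives $\lambda_i\lambda_k\geq 1$ for all $i\neq k$. Your version $\mu_1\lambda_k(\lambda_i+\lambda_k)\geq 1+\lambda_k^2$ is indeed trivial, but it only yields $\frac{2}{\mu_1^2}\sum_{i\notin I,k\in I}F^{i\bar i}\tilde g^{k\bar k}|V_1(\tilde g_{i\bar k})|^2$, which is weaker by a full factor of $\mu_1$. The sharp constant $2/\mu_1$ (rather than $2/\mu_1^2$) is genuinely needed downstream in Lemma~\ref{Lma3.5}, where this term must dominate the bad term $\approx 2(1-\ve)(1+\tau)\tilde g_{\tilde 1\bar{\tilde 1}}\sum F^{i\bar i}\tilde g^{k\bar k}|V_1(\tilde g_{i\bar k})|^2/\mu_1^2$ with $\tilde g_{\tilde 1\bar{\tilde 1}}\sim\mu_1/2$; with $2/\mu_1^2$ the cancellation breaks down.

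A smaller point: you say you "discard the diagonal sum," but the second target sum $\sum_{i\notin I,\,k\notin I}$ includes the terms $i=k\notin I$, so the diagonal contribution $\sum_i 2\lambda_i(1+\lambda_i^2)^{-2}|V_1(\tilde g_{i\bar i})|^2$ must in fact be retained. The paper bounds it with the same estimate $\lambda_k/(1+\lambda_k^2)\geq C_B/(\ve\mu_1^3)$ (using the lower bound $\lambda_n\geq C_1^{-1}$ from $\inf_M h>(n-1)\pi/2$, plus $\mu_1\geq C_B/\ve$), which is also what makes your $(i,k\notin I,\ i\neq k)$ block estimate work — so the correct approach handles both uniformly, whereas throwing the diagonal away leaves those cross terms unaccounted for.
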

\begin{proof}
	
	 By \eqref{the second order derivative of F}, we have
	\begin{equation}\label{third order term from concacity}
	\begin{split}
	& -F^{i\bar{k},j\bar{l}}V_{1}(\tilde{g}_{i\bar{k}})V_{1}(\tilde{g}_{j\bar{l}})  \\
	=& \sum_{i}\frac{2\lambda_{i}}{(1+\lambda_{i}^{2})^{2}}|V_{1}(\tilde{g}_{i\bar{i}})|^{2}
	+\sum_{i\neq k}\frac{\lambda_{i}+\lambda_{k}}{(1+\lambda_{i}^{2})(1+\lambda_{k}^{2})}|V_{1}(\tilde{g}_{i\bar{k}})|^{2}.
	\end{split}
	\end{equation}
Now we calculate these two terms.  We claim that
\begin{equation}\label{3..27}
\begin{split}
&\sum_{i\neq k}\frac{\lambda_{i}+\lambda_{k}}{(1+\lambda_{i}^{2})(1+\lambda_{k}^{2})}|V_{1}(\tilde{g}_{i\bar{k}})|^{2}\\
\geq &
2\sum_{i\not\in I, k\in I}\frac{1}{(1+\lambda_{i}^{2})\lambda_{k}}|V_{1}(\tilde{g}_{i\bar{k}})|^{2}
+\frac{C_{B}}{\ve\mu_{1}^{3}}\sum_{i\not\in I,k\not\in I,i\neq k}\frac{1}{1+\lambda_{i}^{2}}|V_{1}(\tilde{g}_{i\bar{k}})|^{2}.\\
\end{split}
\end{equation}
	
	Denote
	\begin{equation*}
	\begin{split}
	S_{1}:=&\{(i,k):\ i\not\in I,\ k\in I,\ 1\leq i,\ k\leq n\};\\[1.5mm]
	S_{2}:=&\{(i,k):\ i\neq k,\ i\not\in I,\ k\not\in I, \ 1\leq i,\ k\leq n\};\\[1.5mm]
	S_{3}:=&\{(i,k):\ i\in I,\  k\not\in I,\  1\leq i,\ k\leq n\}.
	\end{split}
	\end{equation*}
	Note $S_{j}\subset \{(i,k):\ i\neq k,\  1\leq i,\ k\leq n\}$ and $S_{j}\cap S_{l}=\emptyset$, for $1\leq j,\  l\leq 3$ and $j\neq l$.
	By the symmetry of $i, k$, we have
	\begin{equation*}\label{}
		\begin{split}
	\sum_{i\neq k}\frac{\lambda_{i}+\lambda_{k}}{(1+\lambda_{i}^{2})(1+\lambda_{k}^{2})}|V_{1}(\tilde{g}_{i\bar{k}})|^{2}
	=&2\sum_{i\neq k}\frac{\lambda_{k}}{(1+\lambda_{i}^{2})(1+\lambda_{k}^{2})}|V_{1}(\tilde{g}_{i\bar{k}})|^{2}\\
	\geq&2\sum_{l=1}^{3}\sum_{S_{l}}\frac{\lambda_{k}}{(1+\lambda_{i}^{2})(1+\lambda_{k}^{2})}|V_{1}(\tilde{g}_{i\bar{k}})|^{2}\\
	=&2\sum_{S_{1}}\frac{\lambda_{i}+\lambda_{k}}{(1+\lambda_{i}^{2})(1+\lambda_{k}^{2})}|V_{1}(\tilde{g}_{i\bar{k}})|^{2}~(\textrm{reverse}~ i,k~\textrm{in}~ S_{3})\\
	&+2\sum_{S_{2}}\frac{\lambda_{k}}{(1+\lambda_{i}^{2})(1+\lambda_{k}^{2})}|V_{1}(\tilde{g}_{i\bar{k}})|^{2}.
	\end{split}
	\end{equation*}
Hence, 	to prove \eqref{3..27}, we only need to prove
	\begin{equation}\label{3.26}
	\frac{\lambda_{i}+\lambda_{k}}{(1+\lambda_{i}^{2})(1+\lambda_{k}^{2})}\geq \frac{1}{(1+\lambda_{i}^{2})\lambda_{k}},\  (i,k)\in S_{1}
	\end{equation}
	and
	\begin{equation}\label{3.27}
	\frac{\lambda_{k}}{(1+\lambda_{i}^{2})(1+\lambda_{k}^{2})}\geq \frac{C_{B}}{\ve\mu_{1}^{3}}\frac{1}{1+\lambda_{i}^{2}},~\  (i,k)\in S_{2}.
	\end{equation}

Now we give the proof of the inequality (\ref{3.26}).	Note
	\begin{equation}\label{}
	\begin{split}
	\arctan \lambda_{j}+\arctan \lambda_{n}=&h-\sum_{i\neq j, n}\arctan\lambda_{i}\\
	\geq &(n-1)\frac{\pi}{2} -(n-2)\frac{\pi}{2} =\frac{\pi}{2}.
	\end{split}
	\end{equation}
	Hence
	\[\arctan \lambda_{j}\geq\frac{\pi}{2}-\arctan \lambda_{n}=\arctan \frac{1}{\lambda_{n}}.\]
	By the monotonicity of $\arctan x $, we have
		\begin{equation}\label{3.29}
	\lambda_{j}\lambda_{n}\geq 1,\ \ j=1,2, \cdots, n-1,
	\end{equation}
which implies (\ref{3.26}).
	
	Next, 	for (\ref{3.27}), it suffices to prove
	\begin{equation}\label{3.31}
	\frac{\lambda_{k}}{1+\lambda_{k}^{2}}\geq \frac{C_{B}}{\ve\mu_{1}^{3}}, \ \ k=1,2, \cdots, n.
	\end{equation}
Since $h>(n-1)\frac{\pi}{2}$, we have
	\[\arctan \lambda_{n}\geq h-\sum_{1\leq i\leq n-1}\arctan\lambda_{i}\geq h-(n-1)\frac{\pi}{2}\geq C_{1}^{-1}>0\]
	for some uniform constant $C_{1}$ depending on $\inf_{M}h$. This implies
	\begin{equation}\label{lower bound of eigenvalues}
	\lambda_{k}\geq \lambda_{n}\geq C_{1}^{-1}.
	\end{equation}
	Therefore, when $\lambda_{1}\gg 1$, we have \[\frac{\lambda_{k}}{1+\lambda_{k}^{2}}\geq \frac{1}{C_{1}(1+\lambda_{k}^{2})} \geq \frac{1}{C_{1}\mu_{1}^{2}}.\]
	Now if $\mu_{1}\geq C_{B}/\ve$, then we obtain \eqref{3.31}.
	It follows \eqref{3.27}. We complete the proof of \eqref{3..27}
	
Now we deal with the first term on the right hand side in  \eqref{third order term from concacity}.	By \eqref{3.31}, we have
	\begin{equation}\label{}
	\sum_{i}\frac{2\lambda_{i}}{(1+\lambda_{i}^{2})^{2}}|V_{1}(\tilde{g}_{i\bar{i}})|^{2}\geq \frac{C_{B}}{\ve\mu_{1}^{3}}\sum_{i\not\in I}\frac{1}{1+\lambda_{i}^{2}}{|V_{1}(\tilde{g}_{i\bar{i}})|^{2}}.
	\end{equation}
	Therefore, by \eqref{third order term from concacity} and \eqref{3..27}, we have
	\begin{equation}
	\begin{split}
	&-\frac{1}{\mu_{1}} F^{i\bar{k},j\bar{l}}V_{1}(\tilde{g}_{i\bar{k}})V_{1}(\tilde{g}_{j\bar{l}})\\
	\geq &
	\frac{2}{\mu_{1}}\sum_{i\not\in I, k\in I}F^{i\bar{i}}\tilde{g}^{k\bar{k}}|V_{1}(\tilde{g}_{i\bar{k}})|^{2}
	+\frac{C_{B}}{\ve\mu_{1}^{4}}\sum_{i\not\in I,k\not\in I}F^{i\bar{i}}{|V_{1}(\tilde{g}_{i\bar{k}})|^{2}}.\\
	\end{split}
	\end{equation}
\end{proof}

Define a new (1,0) vector field by
\[
\widetilde{e}_{1}=\frac{1}{\sqrt{2}}(V_{1}-\sqrt{-1}JV_{1}).
\]
At $x_{0}$, we can find a sequence of complex numbers $\nu_{1},\cdots, \nu_{n}$ such that
\begin{equation}\label{tilde e}
\widetilde{e}_{1}:=\sum_{1}^{n}\nu_{k}e_{k},~\sum_{1}^{n}|\nu_{k}|^{2}=1.
\end{equation}

\begin{lemma}\label{nu}
	 We have
	\[
	|\nu_{k}|\leq \frac{C_{B}}{\mu_{1}} ~\textrm{for all} ~k\not\in I.
	\]\
\end{lemma}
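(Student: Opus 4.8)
The plan is to extract the bound on the coefficients $\nu_k$ from the critical point equation \eqref{5..10} together with the various case hypotheses of Case 3 that are already in force at $x_0$. The key observation is that $\widetilde{e}_1 = \frac{1}{\sqrt2}(V_1 - \sqrt{-1}JV_1) = \sum_k \nu_k e_k$ means $u_{\widetilde{e}_1\widetilde{e}_1}$ is, up to controlled lower-order terms coming from the torsion and the difference between $\nabla^2u$ and its complex Hessian, essentially $\sum_{k,l}\nu_k\nu_l \tilde g_{k\bar{\cdot}}$-type quantities; more to the point, $e_i(u_{V_1V_1})$ can be re-expressed in terms of $e_i(\tilde g_{k\bar l})$ with coefficients built from the $\nu_k$. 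First I would write $\mu_1 = u_{V_1V_1} + O(1)$ (since $V_1$ is a real unit eigenvector and $|\nabla u|\le C$ from Proposition \ref{estimates of gradient}), and then differentiate: applying $e_i$ to $u_{V_1V_1}$ and commuting derivatives as in \eqref{acting the first derivative} and Claim 1, one gets
\[
e_i(\mu_1) = \sum_{k} \bar\nu_k e_i(\tilde g_{k\bar k}) \cdot(\text{something})+ \cdots
\]
up to terms bounded by $C\mu_1$; the precise bookkeeping here is routine but must be done carefully because one needs the leading term.

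Next I would feed this into \eqref{5..10}: at $x_0$,
\[
\frac{1}{\mu_1}e_i(\mu_1) = -\phi' e_i(|\nabla u|^2) - Be^{B\widetilde\eta}e_i(\widetilde\eta),
\]
and both terms on the right are bounded by $C_B$ (using $\phi'\le \frac12$, $|\nabla u|\le C$, and the $C^1$ bound on $\widetilde\eta$ modulo the second derivatives of $u$ which enter $e_i(|\nabla u|^2)$ — here one uses that $e_i(|\nabla u|^2)$ involves $u_{i\bar j}, u_{ij}$ which are $O(\mu_1)$, so $\frac{1}{\mu_1}e_i(\mu_1) = O(1)$, not $O(C_B)$; this is the point where the factor $1/\mu_1$ is crucial). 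Thus $|e_i(\mu_1)|\le C_B\mu_1$ for every $i$. Combined with the leading-order expression for $e_i(\mu_1)$ in terms of $\nu_k$, one obtains for $i\notin I$ a relation of the shape $\lambda_i \nu_i \cdot(\text{nonvanishing factor}) = O(C_B)$ after pairing appropriately and using $F^{i\bar i}$-weights; since $\lambda_i \ge \lambda_n \ge C_1^{-1}>0$ by \eqref{lower bound of eigenvalues}, but for $i\notin I$ one has $F^{i\bar i} < B^{-3}e^{-2B\widetilde\eta}F^{n\bar n}$, equivalently $\lambda_i$ is comparable to or larger than $\mu_1^{1/2}$-type scale, the cleanest route is: from $i\notin I$, $F^{i\bar i}\le B^{-3}e^{-2B\widetilde\eta}F^{1\bar 1}$, i.e. $1+\lambda_i^2 \ge B^3 e^{2B\widetilde\eta}(1+\lambda_1^2)\ge cB^3\mu_1^2$ roughly, so $\lambda_i\gtrsim B^{3/2}\mu_1$. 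Then the identity $\mu_1\approx u_{V_1V_1}=\sum|\nu_k|^2\lambda_k + \cdots$ together with $\lambda_k>0$ forces $|\nu_i|^2\lambda_i \le \mu_1 + C$, hence $|\nu_i|^2 \le (\mu_1+C)/\lambda_i \le C_B/\mu_1$, giving $|\nu_i|\le C_B/\mu_1^{1/2}$; a second, sharper pass using the derivative relation upgrades this to $|\nu_i|\le C_B/\mu_1$.

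The main obstacle I anticipate is the commutator bookkeeping in expressing $e_i(u_{V_1V_1})$ and $u_{V_1V_1}$ themselves in terms of the $\nu_k$ and the entries $\tilde g_{k\bar l}$: because the frame $\{e_i\}$ is only unitary at $x_0$ and $V_1$ is a real vector, passing between the real Hessian $\nabla^2 u$ and the complex Hessian $(\tilde g_{i\bar j})$ introduces torsion terms (the $\tau(du)$ terms and $[V_1,e_i]$-type brackets seen in \eqref{three dv} and \eqref{acting the first derivative}), all of which are $O(\mu_1)$ or $O(1)$ but must be tracked to confirm that the coefficient multiplying $\nu_i$ in the leading term genuinely does not degenerate. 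Once that leading coefficient is pinned down to be $\lambda_i$ (or a fixed positive multiple thereof), the conclusion $|\nu_i|\le C_B/\mu_1$ for $i\notin I$ is immediate from the lower bound $\lambda_i \gtrsim B^3\mu_1$ forced by $i\notin I$ and the boundedness $\sum_k|\nu_k|^2=1$ together with the critical equation. I would therefore organize the proof as: (i) normalize and expand $\mu_1, e_i(\mu_1)$ via \eqref{real frame and complex frame}, \eqref{tilde e}; (ii) invoke \eqref{5..10} and the $C^1$ estimate to get $|e_i(\mu_1)|\le C_B\mu_1$; (iii) use $i\notin I$ to get the lower bound on $\lambda_i$; (iv) conclude.
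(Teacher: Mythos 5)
Your proposal has a fatal sign error in the use of the index set $I$, and as a consequence the central mechanism of your argument runs in exactly the wrong direction. Recall $I=\{i: F^{n\bar n}\ge B^3e^{2B\widetilde\eta}F^{i\bar i}\}$ with $\lambda_1\ge\cdots\ge\lambda_n$, so $F^{1\bar1}\le\cdots\le F^{n\bar n}$. Membership $i\in I$ means $F^{i\bar i}$ is \emph{small} compared to $F^{n\bar n}$, i.e.\ $\lambda_i$ is \emph{large}; that is why $1\in I$ and $n\notin I$. For $i\notin I$ one has $F^{i\bar i}> B^{-3}e^{-2B\widetilde\eta}F^{n\bar n}$, which gives $1+\lambda_i^2< B^3e^{2B\widetilde\eta}(1+\lambda_n^2)$, an \emph{upper} bound on $\lambda_i$ comparable to $\lambda_n$. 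You instead assert ``from $i\notin I$, $F^{i\bar i}\le B^{-3}e^{-2B\widetilde\eta}F^{1\bar1}$, so $\lambda_i\gtrsim B^{3/2}\mu_1$,'' which is backwards on both the inequality sign and the reference index. Since the directions $k\notin I$ are precisely the ones where $\lambda_k$ is \emph{small}, your pivotal step $|\nu_i|^2\lambda_i\le\mu_1+C\Rightarrow|\nu_i|^2\le C_B/\mu_1$ collapses: with $\lambda_i$ bounded (not of size $\mu_1$), the bound $|\nu_i|^2\le(\mu_1+C)/\lambda_i$ is worse than the trivial bound $|\nu_i|\le1$.

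Beyond the sign issue, your sketch never actually invokes the failure of Case~2, which is the load-bearing hypothesis in the paper's proof. The paper's argument is: negating \eqref{5.15}, combined with the lower bound $F^{i\bar i}>B^{-3}e^{-2B\widetilde\eta}F^{n\bar n}$ for $i\notin I$, yields $\sum_{i\notin I}\sum_j(|e_ie_ju|^2+|e_i\bar e_ju|^2)\le C_B$; passing to real coordinates via \eqref{real frame and complex frame}, this bounds the real-Hessian rows $\nabla^2_{\gamma\beta}u$ for $\gamma\in\{2p+1,\dots,2n\}$, hence $|\Phi^\gamma_\beta|\le C_B$ there; then the eigenvector equation $\Phi(V_1)=\mu_1 V_1$ gives $|V_1^\gamma|=\mu_1^{-1}|\sum_\beta\Phi^\gamma_\beta V_1^\beta|\le C_B/\mu_1$, and \eqref{tilde e} finishes. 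Your route via the critical-point equation \eqref{5..10} and expanding $\mu_1\approx u_{V_1V_1}$ in terms of $\nu_k$ does not produce the claimed factor of $1/\mu_1$ once the correct orientation of $I$ is restored; the ``second, sharper pass'' you allude to is not an argument.
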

\begin{proof}
	The idea of the proof is similar to the argument of Lemma 5.6 in \cite{CTW16}. Since Case 2 does not hold, then we obtain
	\[
	\frac{\phi'}{4}\sum_{i\not\in I}\sum_{j} F^{i\bar{i}}(|e_{i}e_{j}u|^{2}+|e_{i}\bar{e}_{j}u|^{2})\leq(6n^{2}\sup_{\om}|\nabla \widetilde{\eta}|^{2})B^{2}e^{2B\widetilde{\eta}}F^{n\bar{n}}.
	\]
	While $ F^{n\bar{n}}\leq B^{3}e^{2B\widetilde{\eta}}F^{i\bar{i}}$ for each $i\not\in I$,
	it follows that
	\[
	\sum_{\gamma=2p+1}^{2n}\sum_{\beta=1}^{2n}|\nabla^{2}_{\gamma\beta}u|\leq C_{B}.
	\]
	Therefore, $|\Phi_{\beta}^{\gamma}|\leq C_{B}$ for $2p+1\leq\gamma\leq 2n$, $1\leq \beta\leq 2n$. Since $\Phi(V_{1})=\mu_{1}V_{1}$, then
	\[
	|V_{1}^{\gamma}|=|\frac{1}{\mu_{1}}(\Phi(V_{1}))^{\gamma}|=\frac{1}{\mu_{1}}|
	\sum_{\beta}\Phi_{\beta}^{\gamma}V_{1}^{\beta}|\leq \frac{C_{B}}{\mu_{1}},~2p+1\leq\gamma\leq 2n.
	\]
	Then, by  \eqref{tilde e},
	$|\nu_{k}|\leq |V_{1}^{2k-1}|+|V^{2k}_{1}|\leq \frac{C_{B}}{\mu_{1}}, k\not\in I.$
\end{proof}

 Now we can estimate the first three terms in Lemma \ref{Lma4.2}.
 Since $JV_{1}$ is $\chi$-unit and $\chi$-orthogonal to $V_{1}$, then we can find real numbers $\xi_{2},\cdots,\xi_{2n}$ such that
\begin{equation}\label{JV1}
JV_{1}=\sum_{\beta>1}\xi_{\beta}V_{\beta}, ~\sum_{\beta>1}\xi_{\beta}^{2}=1~ \textrm{at $x_{0}$}.
\end{equation}
~\\

\begin{lemma}\label{Lma3.5}
For any constant $\tau>0$, we have
\[
\begin{split}
    & (2-\ve)\sum_{\beta>1}F^{i\bar{i}}\frac{|e_{i}(u_{V_{\beta}V_{1}})|^{2}}{\mu_{1}(\mu_{1}-\mu_{\beta})}
-\frac{1}{\mu_{1}} F^{i\bar{k},j\bar{l}}V_{1}(\tilde{g}_{i\bar{k}})V_{1}(\tilde{g}_{j\bar{l}})
     -(1+\ve)\sum_{i\not\in I}F^{i\bar{i}} \frac{|e_{i}(\mu_{1})|^{2}}{\mu_{1}^{2}}\\
   \geq &(2-\ve
    )\sum_{i\not\in I}\sum_{\beta>1}F^{i\bar{i}} \frac{|e_{i}(u_{V_{\beta}V_{1}})|^{2}}{\mu_{1}(\mu_{1}-\mu_{\beta})}
    +\sum_{k\in I}\sum_{i\not\in I}\frac{2}{\mu_{1}} F^{i\bar{i}}\tilde{g}^{k\bar{k}}|V_{1}(\tilde{g}_{i\bar{k}})|^{2}\\
    &-3\ve\sum_{i\not\in I}F^{i\bar{i}}\frac{|e_{i}(\mu_{1})|^{2}}{\mu_{1}^{2}}-
    2(1-\ve)(1+\tau)\tilde{g}_{\tilde{1}\bar{\tilde{1}}}\sum_{k\in I}\sum_{i\not\in I}F^{i\bar{i}}\tilde{g}^{k\bar{k}}\frac{|V_{1}(\tilde{g}_{i\bar{k}})|^{2}}{\mu_{1}^{2}}\\
    &-\frac{C}{\ve}\mathcal{F}-(1-\ve)(1+\frac{1}{\tau})(\mu_{1}-\sum_{\beta>1}\mu_{\beta}
    \xi_{\beta}^{2})
    \sum_{i\not\in I}\sum_{\beta>1}\frac{F^{i\bar{i}}}{\mu_{1}^{2}}
    \frac{|e_{i}(u_{V_{\beta}V_{1}})|^{2}}{\mu_{1}-\mu_{\beta}}\\
\end{split}
\]
if we assume $\mu_{1}\geq \frac{C_{B}}{\ve}$, where $\tilde{g}_{\tilde{1}\bar{\tilde{1}}}=\sum \tilde{g}_{i\bar{i}}|\nu_{i}|^{2}$.
\end{lemma}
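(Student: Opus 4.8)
The plan is to bound the left-hand side below by rewriting $e_{i}(\mu_{1})$, for $i\notin I$, entirely in terms of the ``good'' third-order quantities already present, namely the off-diagonal terms $e_{i}(u_{V_{\beta}V_{1}})$ with $\beta>1$ and the complex third derivatives $V_{1}(\tilde{g}_{i\bar{k}})$, and then to conclude by Cauchy--Schwarz. First I would carry out the routine reductions. In $(2-\ve)\sum_{\beta>1}F^{i\bar{i}}|e_{i}(u_{V_{\beta}V_{1}})|^{2}/(\mu_{1}(\mu_{1}-\mu_{\beta}))$ every summand is nonnegative, since $\mu_{1}>0$ on $\Omega$ and $\mu_{1}>\mu_{\beta}$ for $\beta>1$ (recall $\Phi$ was built so that $\mu_{1}>\mu_{2}$ at $x_{0}$), so discarding the summands with $i\in I$ leaves exactly the first term on the right. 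To the concavity term I would apply Lemma \ref{lower bound of concave}: its first lower bound is precisely $\frac{2}{\mu_{1}}\sum_{i\notin I,\,k\in I}F^{i\bar{i}}\tilde{g}^{k\bar{k}}|V_{1}(\tilde{g}_{i\bar{k}})|^{2}$, the second term on the right, and I keep its second lower bound $\frac{C_{B}}{\ve\mu_{1}^{4}}\sum_{i\notin I,\,k\notin I}F^{i\bar{i}}|V_{1}(\tilde{g}_{i\bar{k}})|^{2}$ in reserve, the constant there being as large as needed.

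The core of the proof is an identity for $e_{i}(\mu_{1})$. By Lemma \ref{the derivative of eigenvalues}, \eqref{first derivative of background metric} and the vanishing of the Christoffel symbols of $\chi$ at $x_{0}$, one gets $e_{i}(\mu_{1})=e_{i}(u_{V_{1}V_{1}})+O(1)=\nabla^{3}u(e_{i},V_{1},V_{1})+O(1)$, the $O(1)$ using the gradient bound from Proposition \ref{estimates of gradient}. Commuting covariant derivatives — each commutator costing curvature times $\nabla u$, hence $O(1)$ — turns this into $V_{1}(u_{e_{i}V_{1}})+O(1)$. Now I expand $V_{1}=\frac{1}{\sqrt{2}}(\widetilde{e}_{1}+\overline{\widetilde{e}_{1}})$, use \eqref{tilde e} and \eqref{JV1} together with the fact that $\nabla^{2}u$ and $\ddbar u$ differ only by first-order terms, so $u_{e_{i}\bar{e}_{k}}=\tilde{g}_{i\bar{k}}+O(1)$; feeding these into $u_{e_{i}V_{1}}=\tfrac{1}{2}\big(u_{e_{i}V_{1}}-i\sum_{\beta>1}\xi_{\beta}u_{e_{i}V_{\beta}}\big)+\tfrac{1}{\sqrt{2}}\sum_{k}\bar{\nu}_{k}\tilde{g}_{i\bar{k}}+O(1)$ and solving this linear relation for $u_{e_{i}V_{1}}$ eliminates the troublesome $(2,0)$-Hessian $u_{e_{i}e_{k}}$ and gives $u_{e_{i}V_{1}}=-i\sum_{\beta>1}\xi_{\beta}u_{e_{i}V_{\beta}}+\sqrt{2}\sum_{k}\bar{\nu}_{k}\tilde{g}_{i\bar{k}}+O(1)$. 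Differentiating once more in $V_{1}$ and commuting, so that $V_{1}(u_{e_{i}V_{\beta}})=e_{i}(u_{V_{\beta}V_{1}})+O(\mu_{1})$, yields
\[
e_{i}(\mu_{1})=-i\sum_{\beta>1}\xi_{\beta}\,e_{i}(u_{V_{\beta}V_{1}})
+\sqrt{2}\sum_{k}\bar{\nu}_{k}V_{1}(\tilde{g}_{i\bar{k}})+O(\mu_{1}),
\]
all error contributions being controlled by $C\mu_{1}\mathcal{F}$ after weighting by $F^{i\bar{i}}/\mu_{1}^{2}$ and summing, hence by $\tfrac{C}{\ve}\mathcal{F}$.

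Finally I would split the last sum at $I$. By Lemma \ref{nu}, $|\nu_{k}|\leq C_{B}/\mu_{1}$ for $k\notin I$, so the $k\notin I$ part plus the $O(\mu_{1})$ error, call it $R_{i}$, contributes, after squaring, weighting and summing, at most a multiple of $\frac{1}{\mu_{1}^{4}}\sum_{i\notin I,\,k\notin I}F^{i\bar{i}}|V_{1}(\tilde{g}_{i\bar{k}})|^{2}$ plus $\tfrac{C}{\ve}\mathcal{F}$, and the first piece is swallowed by the reserve term from Lemma \ref{lower bound of concave} once its constant is taken large enough. Setting $P_{i}=-i\sum_{\beta>1}\xi_{\beta}e_{i}(u_{V_{\beta}V_{1}})$ and $Q_{i}=\sqrt{2}\sum_{k\in I}\bar{\nu}_{k}V_{1}(\tilde{g}_{i\bar{k}})$, I use $|e_{i}(\mu_{1})|^{2}=|P_{i}+Q_{i}+R_{i}|^{2}\leq(1+\tau)|Q_{i}|^{2}+(1+\tfrac{1}{\tau})|P_{i}|^{2}+\big(\text{terms absorbed into }\ve|e_{i}(\mu_{1})|^{2}+\tfrac{C}{\ve}\mathcal{F}\big)$, the extra $\ve$-factors producing the slack term $-3\ve\sum_{i\notin I}F^{i\bar{i}}|e_{i}(\mu_{1})|^{2}/\mu_{1}^{2}$. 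Then Cauchy--Schwarz gives $|Q_{i}|^{2}\leq 2\big(\sum_{k\in I}\lambda_{k}|\nu_{k}|^{2}\big)\big(\sum_{k\in I}\tilde{g}^{k\bar{k}}|V_{1}(\tilde{g}_{i\bar{k}})|^{2}\big)\leq 2\,\tilde{g}_{\tilde{1}\bar{\tilde{1}}}\sum_{k\in I}\tilde{g}^{k\bar{k}}|V_{1}(\tilde{g}_{i\bar{k}})|^{2}$ (using $\tilde{g}^{k\bar{k}}=\lambda_{k}^{-1}$ and $\tilde{g}_{\tilde{1}\bar{\tilde{1}}}=\sum_{i}\tilde{g}_{i\bar{i}}|\nu_{i}|^{2}$), while $|P_{i}|^{2}\leq\big(\sum_{\beta>1}\xi_{\beta}^{2}(\mu_{1}-\mu_{\beta})\big)\sum_{\beta>1}\frac{|e_{i}(u_{V_{\beta}V_{1}})|^{2}}{\mu_{1}-\mu_{\beta}}=\big(\mu_{1}-\sum_{\beta>1}\mu_{\beta}\xi_{\beta}^{2}\big)\sum_{\beta>1}\frac{|e_{i}(u_{V_{\beta}V_{1}})|^{2}}{\mu_{1}-\mu_{\beta}}$ via $\sum_{\beta>1}\xi_{\beta}^{2}=1$ from \eqref{JV1}. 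Multiplying $-(1+\ve)|e_{i}(\mu_{1})|^{2}$ by $F^{i\bar{i}}/\mu_{1}^{2}$, summing over $i\notin I$ and combining with the reductions above produces exactly the claimed inequality, provided $\mu_{1}\geq C_{B}/\ve$ so that $1/\mu_{1}^{2}\leq1$ and Lemma \ref{nu} is effective. The hard part is precisely the identity for $e_{i}(\mu_{1})$: performing the almost-complex commutations so that every correction is genuinely $O(\mu_{1})$ or reserve-absorbable, and in particular the algebraic elimination of the $(2,0)$-Hessian, which is what lets $e_{i}(\mu_{1})$ be written through the good terms alone; a secondary delicate point is arranging the Cauchy--Schwarz splittings so the constants land as $(1-\ve)(1+\tau)$, $(1-\ve)(1+1/\tau)$ and $3\ve$.
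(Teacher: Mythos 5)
Your proposal is correct and follows essentially the same route as the paper: derive the identity $e_{i}(\mu_{1})=\sqrt{2}\sum_{k}\bar{\nu}_{k}V_{1}(\tilde{g}_{i\bar{k}})-\sqrt{-1}\sum_{\beta>1}\xi_{\beta}e_{i}(u_{V_{\beta}V_{1}})+O(\mu_{1})$ (the paper's (4.26)), split $-(1+\ve)=-(1-2\ve)-3\ve$ to produce the $-3\ve$ slack term, use Lemma \ref{nu} to dump the $k\notin I$ contributions into the reserve term from Lemma \ref{lower bound of concave}, and finish with the two Cauchy--Schwarz estimates with parameter $\tau$ and weights $\mu_{1}-\mu_{\beta}$, $\tilde g^{k\bar k}$. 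Your ``solve the linear relation for $u_{e_{i}V_{1}}$'' step is just an alternate presentation of the paper's direct expansion $u_{V_{1}V_{1}}=\sqrt 2\,u_{V_{1}\overline{\tilde e}_{1}}-\sqrt{-1}\,u_{V_{1}JV_{1}}$; the two are algebraically equivalent.
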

\textit{Proof.} First,  we can prove
\begin{equation}\label{4.26}
e_{i}(u_{V_{1}V_{1}})=\sqrt{2}\sum_{k} \bar{\nu}_{k}V_{1}(\tilde{g}_{i\bar{k}})-\sqrt{-1}\sum_{\beta>1}\xi_{\beta}e_{i}(u_{V_{1}
V_{\beta}})+O(\mu_{1}),\end{equation}
where $O(\mu_{1})$ denotes the terms which can be controlled by $\mu_{1}$. Indeed, since $\overline{\widetilde{e}}_{1}=\frac{1}{\sqrt{2}}(V_{1}+\sqrt{-1}JV_{1})$,
\[
e_{i}(u_{V_{1}V_{1}})=\sqrt{2} e_{i}(u_{V_{1}\overline{\widetilde{e}}_{1}})-\sqrt{-1} e_{i}(u_{V_{1}JV_{1}}).
\]
 For the first term, using $\tilde{g}_{i\bar{k}}=g_{i\bar{k}}+u_{i\bar{k}}$,
\begin{equation}\label{4.27}
\begin{split}
e_{i}(u_{V_{1}\overline{\widetilde{e}}_{1}})= & e_{i}(V_{1}\overline{\widetilde{e}}_{1}u-(\nabla_{V_{1}}\overline{\widetilde{e}}_{1})u)
=\overline{\widetilde{e}}_{1}e_{i}V_{1}u+O(\mu_{1})   \\
= & \sum_{k}\overline{\nu}_{k}V_{1}(\tilde{g}_{i\bar{k}})+O(\mu_{1}).
\end{split}
\end{equation}
For the second term, by \eqref{JV1},
\begin{equation}\label{4.28}
\begin{split}
e_{i}(u_{V_{1}JV_{1}})=&e_{i}{V_{1}JV_{1}}(u)+O(\mu_{1})=JV_{1}e_{i}{V_{1}}(u)+O(\mu_{1})  \\
=&\sum_{\beta>1}\xi_{\beta}V_{\beta}e_{i}{V_{1}}(u)+O(\mu_{1})=\sum_{\beta>1}\xi_{\beta}e_{i}(u_{V_{\beta}{V_{1}}})+O(\mu_{1}).\\
\end{split}
\end{equation}
Thus, (\ref{4.26}) follows from (\ref{4.27}) and (\ref{4.28}).
 Hence, by \eqref{4.26}, Lemma \ref{nu} and Cauchy-Schwarz inequality, we have
\begin{equation}\label{bad term 1 of third order derivative}
\begin{split}
&-(1+\ve)\sum_{i\not\in I} F^{i\bar{i}}\frac{|e_{i}(\mu_{1})|^{2}}{\mu_{1}^{2}}\\
=&-(1-2\ve)\sum_{i\not\in I} F^{i\bar{i}}\frac{|e_{i}(\sqrt{2}\sum_{k} \bar{\nu}_{k}V_{1}(\tilde{g}_{i\bar{k}})-\sqrt{-1}\sum_{\beta>1}\xi_{\beta}e_{i}(u_{V_{1}
V_{\beta}})+O(\mu_{1}))|^{2}}{\mu_{1}^{2}}\\
&-3\ve\sum_{i\not\in I} F^{i\bar{i}}\frac{|e_{i}(\mu_{1})|^{2}}{\mu_{1}^{2}}\\
\geq &
    -(1-\ve)\sum_{i\not\in I} F^{i\bar{i}}\frac{|\sqrt{2}\sum_{k\in I} \overline{\nu}_{k}V_{1}(\tilde{g}_{i\bar{k}})-\sqrt{-1}\sum_{\beta>1}\xi_{\beta}e_{i}(u_{V_{1}
V_{\beta}})|^{2}}{\mu_{1}^{2}}\\
&-3\ve \sum_{i\not\in I} F^{i\bar{i}}\frac{|e_{i}(\mu_{1})|^{2}}{\mu_{1}^{2}}-\frac{C_{B}}{\ve}\sum_{i\not\in I,k\not\in I}F^{i\bar{i}}\frac{|V_{1}(\tilde{g}_{i\bar{k}})|^{2}}{\mu_{1}^{4}}-\frac{C}{\ve}\mathcal{F}.
\end{split}
\end{equation}
~\\
In addition, using the Cauchy-Schwarz inequality, we have
\[
\Big|\sum_{\beta>1}\xi_{\beta}e_{i}(u_{V_{1}V_{\beta}})\Big|^{2}\leq \sum_{\beta>1}(\mu_{1}-\mu_{\beta}\xi_{\beta}^{2})
\sum_{\beta>1}\frac{|e_{i}(u_{V_{1}V_{\beta}})|^{2}}{\mu_{1}-\mu_{\beta}},
\]
and
\begin{equation}
\begin{split}
\Big|\sum_{k\in I}\overline{\nu}_{k}V_{1}(\tilde{g}_{i\bar{k}})\Big|^{2}&\leq \Big(\sum_{i}\tilde{g}_{i\bar{i}}|\nu_{i}|^{2}\Big)\sum_{k\in I}\tilde{g}^{k\bar{k}}|V_{1}(\tilde{g}_{i\bar{k}})|^{2}\\
&= \tilde{g}_{\tilde{1}\bar{\tilde{1}}}\sum_{k\in I}\tilde{g}^{k\bar{k}}|V_{1}(\tilde{g}_{i\bar{k}})|^{2}.\\
\end{split}
\end{equation}
Then for each $\gamma>0$, using the Cauchy-Schwarz inequality again, we get
\[
\begin{split}
   & (1-\ve)\sum_{i\not\in I} F^{i\bar{i}}\frac{|\sqrt{2}\sum_{k\in I} \overline{\nu_{k}}V_{1}(\tilde{g}_{i\bar{k}})-\sqrt{-1}\sum_{\beta>1}\xi_{\beta}e_{i}(u_{V_{1}
V_{\beta}})|^{2}}{\mu_{1}^{2}}\\
\leq & 2(1-\ve)(1+\tau)\sum_{i\not\in I} F^{i\bar{i}}\frac{|\sum_{k\in I} \overline{\nu_{k}}V_{1}(\tilde{g}_{i\bar{k}})|^{2}}{\mu_{1}^{2}}\\
&+(1-\ve)(1+\frac{1}{\tau})\sum_{i\not\in I} F^{i\bar{i}}\frac{|\sum_{\beta>1}\xi_{\beta}e_{i}(u_{V_{1}
V_{\beta}})|^{2}}{\mu_{1}^{2}}\\
\leq & 2(1-\ve)(1+\tau)\tilde{g}_{\tilde{1}\bar{\tilde{1}}}\sum_{i\not\in I}\sum_{k\in I} \frac{F^{i\bar{i}}}{\mu_{1}^{2}} \tilde{g}^{k\bar{k}}|V_{1}(\tilde{g}_{i\bar{k}})|^{2}\\
&+(1-\ve)(1+\frac{1}{\tau})(\mu_{1}-\sum_{\beta>1} \mu_{\beta}\xi_{\beta}^{2})\sum_{i\not\in I}\sum_{\beta>1}\frac{F^{i\bar{i}}}{\mu_{1}^{2}} \frac{|e_{i}(u_{V_{\beta}V_{1}})|^{2}}{\mu_{1}-\mu_{\beta}}.
\end{split}
\]
Combining with \eqref{bad term 1 of third order derivative}, we have
\begin{equation*}
\begin{split}
&-(1+\ve)\sum_{i\not\in I} F^{i\bar{i}}\frac{|e_{i}(\mu_{1})|^{2}}{\mu_{1}^{2}}\\
\geq &-2(1-\ve)(1+\tau)\tilde{g}_{\tilde{1}\bar{\tilde{1}}}\sum_{i\not\in I}\sum_{k\in I} \frac{F^{i\bar{i}}}{\mu_{1}^{2}} \tilde{g}^{k\bar{k}}|V_{1}(\tilde{g}_{i\bar{k}})|^{2}\\
&-(1-\ve)(1+\frac{1}{\tau})(\mu_{1}-\sum_{\beta>1} \mu_{\beta}\xi_{\beta}^{2})\sum_{i\not\in I}\sum_{\beta>1}\frac{F^{i\bar{i}}}{\mu_{1}^{2}} \frac{|e_{i}(u_{V_{\beta}V_{1}})|^{2}}{\mu_{1}-\mu_{\beta}}\\
&-3\ve \sum_{i\not\in I} F^{i\bar{i}}\frac{|e_{i}(\mu_{1})|^{2}}{\mu_{1}^{2}}-\frac{C_{B}}{\ve}\sum_{i\not\in I,k\not\in I}F^{i\bar{i}}\frac{|V_{1}(\tilde{g}_{i\bar{k}})|^{2}}{\mu_{1}^{4}}-\frac{C}{\ve}\mathcal{F}.
\end{split}
\end{equation*}


Then the lemma follows from it and Lemma \ref{lower bound of concave}.
\qed

\begin{lemma}\label{Lma3.6}
If we assume $\mu_{1}\geq C/{\ve^{3}}$, then
\[
\begin{split}
&(2-\ve)\sum_{\beta>1}F^{i\bar{i}}\frac{|e_{i}(u_{V_{\beta}V_{1}})|^{2}}
{\mu_{1}(\mu_{1}-\mu_{\beta})}-\frac{1}{\mu_{1}} F^{i\bar{k},j\bar{l}}V_{1}(\tilde{g}_{i\bar{k}})V_{1}(\tilde{g}_{j\bar{l}})
-(1+\ve)\sum_{i\not\in I} F^{i\bar{i}}\frac{|e_{i}(\mu_{1})|^{2}}{\mu_{1}^{2}}\\
\geq& -6\ve B^{2}e^{2B\widetilde{\eta}}\sum_{i} F^{i\bar{i}}|e_{i}(\widetilde{\eta})|^{2}-6\ve (\phi')^{2}\sum_{i\not\in I}F^{i\bar{i}}|e_{i}(|\nabla u|^{2})|^{2}-\frac{C}{\ve}\mathcal{F}.
\end{split}
\]
\end{lemma}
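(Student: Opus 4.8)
The plan is to feed Lemma~\ref{Lma3.5} and show that its right--hand side already dominates the one in the statement, after a good choice of $\tau$. Write the two nonnegative ``good'' third--order terms of Lemma~\ref{Lma3.5} as
\[G_1:=(2-\ve)\sum_{i\notin I}\sum_{\beta>1}F^{i\bar i}\frac{|e_i(u_{V_\beta V_1})|^2}{\mu_1(\mu_1-\mu_\beta)},\qquad G_2:=\frac2{\mu_1}\sum_{k\in I}\sum_{i\notin I}F^{i\bar i}\tilde{g}^{k\bar k}|V_1(\tilde{g}_{i\bar k})|^2,\]
the two ``bad'' ones as
\[B_1:=\frac{2(1-\ve)(1+\tau)\,\tilde{g}_{\tilde{1}\bar{\tilde{1}}}}{\mu_1^2}\sum_{k\in I}\sum_{i\notin I}F^{i\bar i}\tilde{g}^{k\bar k}|V_1(\tilde{g}_{i\bar k})|^2,\qquad B_2:=\frac{(1-\ve)(1+\frac1\tau)(\mu_1-A)}{\mu_1^2}\sum_{i\notin I}\sum_{\beta>1}F^{i\bar i}\frac{|e_i(u_{V_\beta V_1})|^2}{\mu_1-\mu_\beta},\]
where $A:=\sum_{\beta>1}\mu_\beta\xi_\beta^2$. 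Since $G_1,B_2$ are assembled from the same nonnegative quantities $F^{i\bar i}\frac{|e_i(u_{V_\beta V_1})|^2}{\mu_1-\mu_\beta}$ and $G_2,B_1$ from $F^{i\bar i}\tilde{g}^{k\bar k}|V_1(\tilde{g}_{i\bar k})|^2$, it is enough to produce $\tau>0$ with
\[(\star)\ \ (2-\ve)\mu_1\ge(1-\ve)(1+\tfrac1\tau)(\mu_1-A),\qquad (\star\star)\ \ \mu_1\ge(1-\ve)(1+\tau)\,\tilde{g}_{\tilde{1}\bar{\tilde{1}}}.\]
Then $G_1-B_2\ge0$ and $G_2-B_1\ge0$ pointwise, so the common left--hand side of Lemmas~\ref{Lma3.5} and~\ref{Lma3.6} is $\ge -3\ve\sum_{i\notin I}F^{i\bar i}\frac{|e_i(\mu_1)|^2}{\mu_1^2}-\frac C\ve\mathcal F$; inserting \eqref{5..10} together with $|a+b|^2\le2|a|^2+2|b|^2$ converts $-3\ve\sum_{i\notin I}F^{i\bar i}\frac{|e_i(\mu_1)|^2}{\mu_1^2}$ into $-6\ve(\phi')^2\sum_{i\notin I}F^{i\bar i}|e_i(|\nabla u|^2)|^2-6\ve B^2e^{2B\widetilde\eta}\sum_{i\notin I}F^{i\bar i}|e_i(\widetilde\eta)|^2$, and enlarging the last sum to all $i$ gives exactly the claimed bound.

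Producing such a $\tau$ relies on the geometric identity $2\,\tilde{g}_{\tilde{1}\bar{\tilde{1}}}=\mu_1+A+O(1)$. To get it, write $\widetilde e_1=\tfrac1{\sqrt2}(V_1-\sqrt{-1}\,JV_1)$; then $\mu_1=u_{V_1V_1}$, $A=u_{JV_1JV_1}+O(1)$, while $\tilde{g}_{\tilde{1}\bar{\tilde{1}}}=\sum_k\lambda_k|\nu_k|^2=g(\widetilde e_1,\overline{\widetilde e_1})+(\nabla du)(\widetilde e_1,\overline{\widetilde e_1})+O(1)=\tfrac12(u_{V_1V_1}+u_{JV_1JV_1})+O(1)$, since the $(1,1)$--covariant Hessian differs from the real Hessian by $O(|\nabla u|)=O(1)$. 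Here only the $O(1)$ error must be controlled; in the K\"ahler case the complementary $(2,0)$--term $\mu_1-A=2\,\mathrm{Re}\,u_{\widetilde e_1\widetilde e_1}+O(1)$ would itself be $O(1)$, and the failure of this on an almost Hermitian manifold is exactly what makes the present lemma non-trivial. From $A\le\mu_2\le\mu_1$ one gets $N:=\mu_1-A\ge0$, and combining $\tilde{g}_{\tilde{1}\bar{\tilde{1}}}\ge\lambda_n\ge C_1^{-1}>0$ (the eigenvalue lower bound from the proof of Lemma~\ref{lower bound of concave}) with the identity gives $N\le2\mu_1+O(1)$.

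I would choose $\tau$ \emph{depending on the point} $x_0$ --- admissible because Lemma~\ref{Lma3.5} holds for every $\tau>0$ with a threshold on $\mu_1$ independent of $\tau$. With $r:=N/\mu_1$ (so $r\in[0,2+O(1/\mu_1)]$) and $\tilde{g}_{\tilde{1}\bar{\tilde{1}}}=\mu_1(1-\tfrac r2)+O(1)$, conditions $(\star)$ and $(\star\star)$ read $\tau\ge\tau_{\min}(r)$ and $\tau\le\tau_{\max}(r)$ for explicit $\tau_{\min},\tau_{\max}$ (with $\tau_{\max}=+\infty$ as soon as $1-r/2\le0$), and a short computation using $P+N=2\mu_1$ with $P=\mu_1+A=2\tilde{g}_{\tilde{1}\bar{\tilde{1}}}+O(1)$ shows $[\tau_{\min}(r),\tau_{\max}(r)]$ is non-empty, of length bounded below, for all admissible $r$; one then takes $\tau(x_0)$ to be, say, its midpoint (and a fixed large multiple of $1/\ve$ in the borderline regime where $r$ is near $2$ and $\tilde{g}_{\tilde{1}\bar{\tilde{1}}}$ is bounded). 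The main obstacle --- the only step requiring genuine care --- is verifying this non-emptiness uniformly, which forces the $O(1)$ corrections in $2\tilde{g}_{\tilde{1}\bar{\tilde{1}}}=\mu_1+A+O(1)$ and $\tilde{g}_{\tilde{1}\bar{\tilde{1}}}=\mu_1(1-r/2)+O(1)$ to be absorbed into the standing hypothesis $\mu_1\ge C/\ve^3$; with that in hand, $G_1-B_2\ge0$, $G_2-B_1\ge0$, and the reduction above finishes the proof.
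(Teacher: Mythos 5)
Your proposal is correct and follows the same strategy as the paper: feed Lemma~\ref{Lma3.5}, absorb its two bad third-order terms into the two good ones by a point-dependent choice of $\tau$, and control the remainder via \eqref{5..10}. The difference is organizational. The paper splits according to whether $\mu_{1}+A\geq 2(1-\ve)\tilde{g}_{\tilde 1\bar{\tilde 1}}$: in that case it takes $\tau=(\mu_1-A)/(\mu_1+A)$, which makes $G_2-B_1$ vanish exactly and leaves $\ve\mu_1$ of slack in $G_1-B_2$; in the complementary case it first derives $\tilde{g}_{\tilde 1\bar{\tilde 1}}\le C/\ve$ from \eqref{4.32}, takes the fixed $\tau=1/\ve^2$, and uses $\mu_1\geq C/\ve^3$ to swallow the resulting overshoot in $B_1$. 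You instead isolate the two sufficient inequalities $(\star)$, $(\star\star)$ --- exactly the requirements $G_1\geq B_2$, $G_2\geq B_1$ --- and argue that the interval $[\tau_{\min},\tau_{\max}]$ they define is never empty, using the same identity $2\tilde{g}_{\tilde 1\bar{\tilde 1}}=\mu_1+A+O(1)$ coming from \eqref{4.32}. This indeed works: writing $P=\mu_1+A$, $N=\mu_1-A$, one computes $\tau_{\min}=\tfrac{2(1-\ve)N}{(2-\ve)P+\ve N}$, $\tau_{\max}=\tfrac{\mu_1-(1-\ve)\tilde{g}_{\tilde 1\bar{\tilde 1}}}{(1-\ve)\tilde{g}_{\tilde 1\bar{\tilde 1}}}$, and $\tau_{\min}\le\tau_{\max}$ reduces to $\ve\bigl[4(2-\ve)\mu_1^2-2(1-\ve)\mu_1 N\bigr]\ge 4(1-\ve)(2-\ve)\mu_1 E$ with $|E|\le C$ and $0\le N\le 2\mu_1+O(1)$, which holds for $\mu_1\gtrsim C/\ve$. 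So your version is in fact slightly more economical than the paper's (it never needs the full $\ve^3$). Two small presentational points: the ``length bounded below'' claim is neither needed nor obviously true, and the parenthetical about falling back to a fixed multiple of $1/\ve$ in the borderline regime is superfluous once the interval is shown non-empty --- it reads as if you were unsure, whereas the computation closes the gap.
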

\begin{proof} By \eqref{5..10}, it suffices to prove
\begin{equation}\label{4.30}
\begin{split}
(2-\ve)&\sum_{\beta>1}F^{i\bar{i}}\frac{|e_{i}(u_{V_{\beta}V_{1}})|^{2}}
{\mu_{1}(\mu_{1}-\mu_{\beta})}-\frac{1}{\mu_{1}} F^{i\bar{k},j\bar{l}}V_{1}(\tilde{g}_{i\bar{k}})V_{1}(\tilde{g}_{j\bar{l}})\\&
-(1+\ve)\sum_{i\not\in I} F^{i\bar{i}}\frac{|e_{i}(\mu_{1})|^{2}}{\mu_{1}^{2}}
\geq -3\ve \sum_{i\not\in I} F^{i\bar{i}}\frac{|e_{i}(\mu_{1})|^{2}}{\mu_{1}^{2}}-\frac{C}{\ve} \mathcal{F}.
\end{split}
\end{equation}
We divide the proof into two cases.
~\\
\textbf{Case I:} Assume that
\begin{equation}\label{5.32}
\mu_{1}+\sum_{\beta>1}\mu_{\beta}\xi_{\beta}^{2}\geq 2(1-\ve)\tilde{g}_{\tilde{1}\bar{\tilde{1}}}>0.
\end{equation}
 It follows from Lemma \ref{Lma3.5} and (\ref{5.32}) that
\begin{equation}\label{5.76}
\begin{split}
    & (2-\ve)\sum_{\beta>1}F^{i\bar{i}}\frac{|e_{i}(u_{V_{\beta}V_{1}})|^{2}}{\mu_{1}(\mu_{1}-\mu_{\beta})}
-\frac{1}{\mu_{1}} F^{i\bar{k},j\bar{l}}V_{1}(\tilde{g}_{i\bar{k}})V_{1}(\tilde{g}_{j\bar{l}})\\
    & -(1+\ve)\sum_{i\not\in I} F^{i\bar{i}}\frac{|e_{i}(\mu_{1})|^{2}}{\mu_{1}^{2}}\\
    \geq &\sum_{i\not\in I}\sum_{\beta>1} \frac{F^{i\bar{i}}}{\mu_{1}}\left(\frac{(2-\ve
    )\mu_{1}}{\mu_{1}-\mu_{\beta}}|e_{i}(u_{V_{\beta}V_{1}})|^{2}\right)
    +\sum_{k\in I}\sum_{i\not\in I}\frac{2}{\mu_{1}} F^{i\bar{i}}\tilde{g}^{k\bar{k}}|V_{1}(\tilde{g}_{i\bar{k}})|^{2}\\
    &-3\ve\sum_{i\not\in I}F^{i\bar{i}}\frac{|e_{i}(\mu_{1})|^{2}}{\mu_{1}^{2}}-
    (1+{\tau})(\mu_{1}+\sum_{\beta>1}\mu_{\beta}\xi_{\beta}^{2})\sum_{k\in I}\sum_{i\not\in I}F^{i\bar{i}}\tilde{g}^{k\bar{k}}\frac{|V_{1}(\tilde{g}_{i\bar{k}})|^{2}}{\mu_{1}^{2}}\\
    &-\frac{C}{\ve}\mathcal{F}-
    (1-\ve)(1+\frac{1}{\tau})(\mu_{1}-\sum_{\beta>1}\mu_{\beta}\xi_{\beta}^{2})
    \sum_{i\not\in I}\sum_{\beta>1}\frac{F^{i\bar{i}}}{\mu_{1}^{2}}\frac{|e_{i}(u_{V_{\beta}V_{1}})|^{2}}{\mu_{1}-\mu_{\beta}}.
\end{split}
\end{equation}
Choose
\[
\tau=\frac{\mu_{1}-\underset{\beta>1}{\sum}\mu_{\beta}\xi_{\beta}^{2}}
{\mu_{1}+\underset{\beta>1}{\sum}\mu_{\beta}\xi_{\beta}^{2}}.
\]
Therefore,
\[
\begin{split}
    & (1+{\tau})(\mu_{1}+\sum_{\beta>1}\mu_{\beta}\xi_{\beta}^{2})\sum_{k\in I}\sum_{i\not\in I}F^{i\bar{i}}\tilde{g}^{k\bar{k}}\frac{|V_{1}(\tilde{g}_{i\bar{k}})|^{2}}{\mu_{1}^{2}} \\
    & +(1-\ve)(1+\frac{1}{\tau})(\mu_{1}-\sum_{\beta>1}\mu_{\beta}\xi_{\beta}^{2})
    \sum_{i\not\in I}\sum_{\beta>1}\frac{F^{i\bar{i}}}{\mu_{1}^{2}}
    \frac{|e_{i}(u_{V_{\beta}V_{1}})|^{2}}{\mu_{1}-\mu_{\beta}}\\
    =& 2\sum_{k\in I}\sum_{i\not\in I}F^{i\bar{i}}\tilde{g}^{k\bar{k}}\frac{|V_{1}(\tilde{g}_{i\bar{k}})|^{2}}{\mu_{1}}
    +2(1-\ve)\sum_{i\not\in I}\sum_{\beta>1}\frac{F^{i\bar{i}}}{\mu_{1}}
    \frac{|e_{i}(u_{V_{\beta}V_{1}})|^{2}}{\mu_{1}-\mu_{\beta}}.
\end{split}
\]
Then (\ref{4.30}) follows from (\ref{5.76}).\qed
~\\
\textbf{Case II:}  Assume that
\begin{equation}\label{4.31-1}
{\mu_{1}+\sum_{\beta>1}\mu_{\beta}\xi_{\beta}^{2}}< 2(1-\ve)\tilde{g}_{\tilde{1}\bar{\tilde{1}}}.
\end{equation}
By a directly calculation,
\begin{equation}\label{4.32}
\begin{split}
&\tilde{g}(\tilde{e},\overline{\tilde{e}})\\
=&    g(\tilde{e},\overline{\tilde{e}})+\tilde{e}\overline{\tilde{e}}(u)-[\tilde{e},\overline{\tilde{e}}]^{(0,1)}(u)\\
=&g(\tilde{e},\overline{\tilde{e}})+\frac{1}{2}(V_{1}V_{1}(u)+(JV_{1})(JV_{1})(u)+\sqrt{-1}[V_{1},JV_{1}](u))\\
&\quad\, -[\tilde{e},\overline{\tilde{e}}]^{(0,1)}(u)\\
=&   \frac{1}{2}\big(\mu_{1}+\sum_{\alpha>1}\mu_{\alpha}\xi_{\alpha}^{2}\big)+g(\tilde{e},\overline{\tilde{e}})+(\nabla_{V_{1}}V_{1})(u)+(\nabla_{JV_{1}}JV_{1})(u)\\
&  \quad\,   +\sqrt{-1}[V_{1},JV_{1}](u) -[\tilde{e},\overline{\tilde{e}}]^{(0,1)}(u)\\
\leq& \frac{1}{2}\big(\mu_{1}+\sum_{\alpha>1}\mu_{\alpha}\xi_{\alpha}^{2}\big)+C.
\end{split}
\end{equation}
Plugging (\ref{4.31-1}) into (\ref{4.32}), then
\begin{equation}\label{3..42}
\tilde{g}_{\tilde{1}\bar{\tilde{1}}}\leq C/{\ve}.
\end{equation}
By \eqref{4.32} we have $\mu_{1}+\sum_{\beta>1} \mu_{\beta}\xi_{\beta}^{2}\geq -C$.
Hence,
\[
0<\mu_{1}-\sum_{\beta>1} \mu_{\beta}\xi_{\beta}^{2}\leq 2\mu_{1}+C\leq (2+2\ve^{2})\mu_{1}
\]
provided by $\mu_{1}\geq C/{\ve^{2}}$.  Choose $\tau=1/{\ve^{2}}$. It follows that
\[
\begin{split}
(1-\ve)(1+\frac{1}{\tau})(\mu_{1}-\sum_{\beta>1}\mu_{\beta}\xi_{\beta}^{2})
 \leq & 2(1-\ve)(1+\ve^{2})^{2}\mu_{1}\\
 \leq &(2-\ve)\mu_{1},
 \end{split}
\]
when $\ve$ small enough.
Then, by Lemma \ref{Lma3.5}, we have
\[
\begin{split}
    & (2-\ve)\sum_{\beta>1}F^{i\bar{i}}\frac{|e_{i}(u_{V_{\beta}V_{1}})|^{2}}{\mu_{1}(\mu_{1}-\mu_{\beta})}
-\frac{1}{\mu_{1}} F^{i\bar{k},j\bar{l}}V_{1}(\tilde{g}_{i\bar{k}})V_{1}(\tilde{g}_{j\bar{l}}) -(1+\ve)\sum_{i\not\in I} F^{i\bar{i}}\frac{|e_{i}(\mu_{1})|^{2}}{\mu_{1}^{2}}\\
    \geq &2\sum_{k\in I}\sum_{i\not\in I} F^{i\bar{i}}\tilde{g}^{k\bar{k}}\frac{|V_{1}(\tilde{g}_{i\bar{k}})|^{2}}{\mu_{1}}-3\ve\sum_{i\not\in I}F^{i\bar{i}}\frac{|e_{i}(\mu_{1})|^{2}}{\mu_{1}^{2}}\\
    &-2(1-\ve)(1+\frac{1}{\ve^{2}})\tilde{g}_{\tilde{1}\bar{\tilde{1}}}\sum_{k\in I}\sum_{i\not\in I}F^{i\bar{i}}\tilde{g}^{k\bar{k}}\frac{|V_{1}(\tilde{g}_{i\bar{k}})|^{2}}{\mu_{1}^{2}}    -\frac{C}{\ve}\mathcal{F}\\
    \stackrel{\eqref{3..42}}{\geq} &2\sum_{k\in I}\sum_{i\not\in I} F^{i\bar{i}}\tilde{g}^{k\bar{k}}\frac{|V_{1}(\tilde{g}_{i\bar{k}})|^{2}}{\mu_{1}} -3\ve\sum_{i\not\in I}F^{i\bar{i}}\frac{|e_{i}(\mu_{1})|^{2}}{\mu_{1}^{2}}\\
    &-(1-\ve)(1+\frac{1}{\ve^{2}})\frac{C}{\ve}\sum_{k\in I}\sum_{i\not\in I}F^{i\bar{i}}\tilde{g}^{k\bar{k}}\frac{|V_{1}(\tilde{g}_{i\bar{k}})|^{2}}{\mu_{1}^{2}}    -\frac{C}{\ve}\mathcal{F}\\
    \geq &-3\ve\sum_{i\not\in I}F^{i\bar{i}}\frac{|e_{i}(\mu_{1})|^{2}}{\mu_{1}^{2}}-\frac{C}{\ve}\mathcal{F},\\
\end{split}
\]
~\\
if we assume $\mu_{1}\geq C/{\ve^{3}}$ in the last inequality. This proves  (\ref{4.30}).
\end{proof}

We now complete the proof of second order estimates. By Lemma \ref{Lma3.6}, Lemma \ref{partial third order} and (\ref{4.7"}), we have
\begin{equation}\label{}
\begin{split}
L(Q)  \geq & -6\ve B^{2}e^{2B\widetilde{\eta}} F^{i\bar{i}}|e_{i}(\widetilde{\eta})|^{2}-6\ve (\phi')^{2}\sum_{i\not\in I}F^{i\bar{i}}|e_{i}(|\nabla u|^{2})|^{2}-\frac{C}{\ve}\mathcal{F}\\
&+ \frac{\phi'}{2}\sum_{j} F^{i\bar{i}}(|e_{i}e_{j}u|^{2}+|e_{i}\bar{e}_{j}u|^{2})
+B^{2}e^{B\widetilde{\eta}} F^{i\bar{i}}|e_{i}(\widetilde{\eta})|^{2}+Be^{B\widetilde{\eta}}L(\widetilde{\eta})\\
& +\phi''F^{i\bar{i}}|e_{i}(|\nabla u|^{2})|^{2}-{2(\phi')^{2}\sum_{i\in I} F^{i\bar{i}}|e_{i}(|\nabla u|^{2})|^{2}}.
\end{split}
\end{equation}
Choose $\ve< \min\{\frac{1}{6n},\theta/6\}$ such that $e^{B\widetilde{\eta}(0)}=\frac{1}{6\ve}$. By $\phi''=2(\phi')^{2}$, then
\[
\begin{split}
0 \geq &-\frac{C}{\ve}\mathcal{F}+ \frac{\phi'}{2}\sum_{j} F^{i\bar{i}}(|e_{i}e_{j}u|^{2}+|e_{i}\bar{e}_{j}u|^{2})\\
&+(B^{2}e^{B\widetilde{\eta}}-6\ve B^{2}e^{2B\widetilde{\eta}}) \sum_{i\not\in I}F^{i\bar{i}}|e_{i}(\widetilde{\eta})|^{2}+Be^{B\widetilde{\eta}}L(\widetilde{\eta})\\
=&-\frac{C}{\ve}\mathcal{F}+ \frac{\phi'}{2}\sum_{j} F^{i\bar{i}}(|e_{i}e_{j}u|^{2}+|e_{i}\bar{e}_{j}u|^{2})+Be^{B\widetilde{\eta}}L(\widetilde{\eta}).
\end{split}
\]
In other words,
\begin{equation}\label{3.44}
\frac{B}{6\ve}L(\widetilde{\eta})-\frac{C}{\ve}\mathcal{F}+ \frac{\phi'}{2}\sum_{j} F^{i\bar{i}}(|e_{i}e_{j}u|^{2}+|e_{i}\bar{e}_{j}u|^{2})\leq 0.
\end{equation}
(a). Suppose (\ref{2..25}) holds. Then we have
\[
(\frac{B\theta}{6\ve}-\frac{C}{\ve})\mathcal{F}+ \frac{\phi'}{2}\sum_{j} F^{i\bar{i}}(|e_{i}e_{j}u|^{2}+|e_{i}\bar{e}_{j}u|^{2})\leq 0.
\]
~\\
Choose $B$ sufficiently large and $\ve<\theta/6$ small enough  such that $B\theta/6-C\geq B\ve$. Then at $x_{0}$ we have
\[
0\geq B\mathcal{F}+ \frac{\phi'}{2}\sum_{j} F^{i\bar{i}}(|e_{i}e_{j}u|^{2}+|e_{i}\bar{e}_{j}u|^{2}).
\]
This yields a contradiction.
~\\
(b). Suppose  (\ref{2..26}) holds. That is,
\[1\geq F^{i\bar{i}}\geq \theta\mathcal{F}\geq \theta\mathcal{K},~ \textrm{for}~ i=1,2,\cdots,n.\]
Combining  \eqref{lower bound of Leta} with \eqref{3.44},  we have
\[
 \frac{\theta\phi'}{2}\mathcal{K}\sum_{i,j} (|e_{i}e_{j}u|^{2}+|e_{i}\bar{e}_{j}u|^{2})\leq \frac{nB}{12\ve}+\frac{C}{\ve}.
\]
Therefore,
\begin{equation}\label{}
\sum_{i,j} (|e_{i}e_{j}u|^{2}+|e_{i}\bar{e}_{j}u|^{2})\leq C_{B}.
\end{equation}
Then this proves the Case 3. In conclusion, we obtain the second order estimates.
\qed
\bigskip

By \eqref{the first order derivative of F} and Theorem \ref{Thm4.1}, the equation \eqref{DHYM} is uniformly elliptic.  By the $C^{2,\alpha}$ estimates (e.g. \cite[Theorem 1.1]{TWWY15}), it follows
 $\|u\|_{C^{2,\alpha}}\leq C.$
Then, by a standard bootstrapping argument, we complete the proof of Theorem \ref{main theorem}.

\section{Proof of Theorem \ref{constant angle theorem}}
We use the arguments of \cite{CJY} and \cite{SW} to prove the Theorem \ref{constant angle theorem}. We will use the continuity method. First we give the openness.
\subsection{Openness}
Consider the family of  equations
\begin{equation}\label{cm1}
\sum_{i}\arctan\lambda_{i}(u_{t})=(1-t)h_{0}+th_{1}+c_{t},
\end{equation}
where $h_{0}, h_{1}\in ((n-1)\frac{\pi}{2}, n\frac{\pi}{2} )$ are smooth functions on $M$.
\begin{prop}\label{open}
	Suppose $u_{t_{0}}$ satisfies
	\[
	\sum_{i}\arctan\lambda_{i}(u_{t_{0}})=(1-t_{0})h_{0}+t_{0}h_{1}+c_{t_{0}}.
	\]
Then there exists $\epsilon>0$ such that when $|t-t_{0}|\leq \epsilon$, we can find $(u_{t}, c_{t})$ solving \eqref{cm1}.  	
\end{prop}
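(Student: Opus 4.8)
The plan is to apply the implicit function theorem in H\"older spaces. Fix $k\geq 2$ and $\alpha\in(0,1)$, and after subtracting a constant from $u_{t_0}$ assume $\int_M u_{t_0}\,\chi^n=0$. Since adding a constant to $u$ does not change $\omega_u$, I would work on the closed hyperplane $\mathcal{B}:=\{v\in C^{k+2,\alpha}(M):\int_M v\,\chi^n=0\}$ and treat the constant $c$ as an extra unknown in $\mathbb{R}$; this additional parameter is exactly what compensates the one-dimensional cokernel of the linearization. Define, for $t$ near $t_0$,
\[
\Psi(v,c,t)=\sum_i\arctan\lambda_i\big(\omega_{u_{t_0}+v}\big)-(1-t)h_0-th_1-c\ \in\ C^{k,\alpha}(M).
\]
This is well defined and smooth in a neighbourhood of $(0,c_{t_0},t_0)$: the map $v\mapsto\omega_{u_{t_0}+v}$ is affine, the function $A\mapsto\sum_i\arctan\lambda_i(A)=\mathrm{tr}\,\arctan(A)$ on Hermitian endomorphisms is real-analytic, and since $\lambda(\omega_{u_{t_0}})\in\Gamma$ the eigenvalues of $\omega_{u_{t_0}+v}$ remain in $\Gamma$ for $v$ small. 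By hypothesis $\Psi(0,c_{t_0},t_0)=0$.

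Next I would compute the partial differential of $\Psi$ in $(v,c)$ at $(0,c_{t_0},t_0)$. By \eqref{L} it is
\[
D_{(v,c)}\Psi\,(w,\dot c)=Lw-\dot c,
\]
with $L=\sum_{i,j}F^{i\bar j}(e_i\bar e_j-[e_i,\bar e_j]^{0,1})$ the second-order elliptic operator of \eqref{L}, which has no zeroth-order term. I claim $D_{(v,c)}\Psi:\mathcal{B}\times\mathbb{R}\to C^{k,\alpha}(M)$ is an isomorphism. Indeed $L$ is a real second-order elliptic operator on the closed manifold $M$, hence Fredholm of index $0$; since $L$ has no zeroth-order term, the strong maximum principle forces $\ker L=\mathbb{R}$ (the constants), so $\operatorname{coker}L$ is one-dimensional. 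The constant $1$ is not in $\operatorname{Im}L$: if $Lw=1$, then at a maximum point of $w$ the first-order terms of $L$ vanish and the second-order part is nonpositive, giving $0\geq (Lw)=1$, a contradiction. Hence $C^{k,\alpha}(M)=\operatorname{Im}L\oplus\mathbb{R}\cdot 1$, and because $L(w+\mathrm{const})=Lw$ the restriction of $L$ to $\mathcal{B}$ still has image $\operatorname{Im}L$; therefore $(w,\dot c)\mapsto Lw-\dot c$ is onto. It is injective as well: $Lw=\dot c$ with $\dot c\neq 0$ is impossible by the same maximum/minimum-principle argument, so $w$ is constant and then $w=0$ by the normalization $\int_M w\,\chi^n=0$.

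Granting the isomorphism, the implicit function theorem produces $\epsilon>0$ and a $C^1$ family $t\mapsto(v_t,c_t)$, $|t-t_0|\leq\epsilon$, with $(v_{t_0},c_{t_0})=(0,c_{t_0})$ and $\Psi(v_t,c_t,t)=0$; then $u_t:=u_{t_0}+v_t$ together with $c_t$ solves \eqref{cm1}. Shrinking $\epsilon$, continuity in $t$ keeps $\lambda(\omega_{u_t})$ in $\Gamma$ and, by \eqref{the first order derivative of F}, keeps the equation uniformly elliptic, so elliptic regularity makes $u_t$ smooth and the hypercritical range is preserved. The step that really needs care is the surjectivity of the linearized operator: on an almost Hermitian manifold $L$ need not be self-adjoint, so one cannot simply identify $\operatorname{coker}L$ with $\ker L^{\ast}=\mathbb{R}$; instead one combines the index-zero property with the elementary observation that $1\notin\operatorname{Im}L$, which is precisely what makes the auxiliary constant $c$ the correct extra unknown.
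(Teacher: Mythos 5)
Your argument is correct and follows the paper's overall blueprint (implicit function theorem with the constant $c$ as an extra unknown compensating the one-dimensional cokernel of $L$), but you handle the surjectivity of the linearization differently. The paper computes $\ker L^{*}=\mathrm{Span}\{\vp_{0}\}$, proves $\vp_{0}$ does not change sign (otherwise a positive function orthogonal to $\vp_{0}$ would be in $\mathrm{Im}\,L$, contradicting the maximum principle), upgrades this to $\vp_{0}>0$ via the strong maximum principle for $L^{*}$, and then defines the domain as $\{\psi:\int_M\psi\,\vp_0\chi^n=0\}$, using $\vp_0>0$ to pick the unique $c$ with $\int_M(\hat h+c)\vp_0\chi^n=0$. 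You instead avoid constructing $\vp_{0}$ entirely: from index zero and $\ker L=\mathbb{R}$ you get $\mathrm{codim}\,\mathrm{Im}\,L=1$, and the maximum-principle observation $1\notin\mathrm{Im}\,L$ immediately gives the direct-sum decomposition $C^{k,\alpha}(M)=\mathrm{Im}\,L\oplus\mathbb{R}\cdot 1$, from which surjectivity and injectivity of $(w,\dot c)\mapsto Lw-\dot c$ follow with a simpler normalization $\int_M v\,\chi^n=0$. Both use the same core maximum-principle fact that $L$ never hits a function of fixed sign; your packaging is more elementary in that it sidesteps the Fredholm alternative, the explicit identification of $\ker L^*$, and the positivity argument for $\vp_0$, at the (negligible) cost of spelling out that the codimension-one closed subspace $\mathrm{Im}\,L$ together with $\mathbb{R}\cdot 1$ spans everything. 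Your observation that $L$ is not self-adjoint on an almost Hermitian manifold, so $\ker L^{*}$ need not be the constants, is exactly the subtlety that makes the paper's $\vp_0$-route nontrivial and that your route bypasses cleanly.
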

\begin{proof}
	Set	$L=F^{i\bar{j}}(e_{i}\bar{e}_{j}-[e_{i},\bar{e}_{j}]^{0,1}).$
		Since the operator is homotopic to the  canonical Laplacian operator $\Delta^{C}$, the index of $L$ is zero, where $\Delta^{C} \psi=\frac{n\chi^{n-1}\wedge \ddbar \psi}{\chi^{n}}$. By the  maximum principle,
	\begin{equation}\label{ker L}
	\text{Ker}(L)=\{\text{Constants} \}.
	\end{equation}
	Denote by $L^{*}$  the $L^{2}$-adjoint operator of $L$. By the Fredholm theorem, there is a smooth function $\vp_{0}$ such that
	\begin{equation}\label{ker L*}
	\text{Ker}(L^{*})=\text{Span}\{\vp_{0}\}.
	\end{equation}
	Now we prove that $\vp_{0}$ does not change the sign. If $\vp_{0}$ changes sign, there is a positive function $\tilde{f}$ perpendicular to it, but
	cannot be the image of $L$ by the maximum principle. It is a contradiction. Without
	loss of generality, assume that  $\vp_{0}$ is non-negative.
	By the strong maximum principle for the elliptic operator $L^{*}$, the function $\vp_{0}>0$.	Denote $$\tilde{B}=\Big\{\psi\in C^{2,\alpha}(M): \int_{M}\psi \vp_{0}\chi^{n}=0 \text{\ and\ } \omega+\ddbar\psi>0 \Big\}\times \mathbb{R}.$$
	Consider the map $G$, from $\tilde{B}$ to $C^{\alpha}(M)$, where
		\begin{equation*}
		G(\psi,c):=\sum_{i}\arctan\lambda_{i}(\psi)-c.
	\end{equation*}
Then	the linear operator of $G$ at $(u_{0}, 0)$ is
\begin{equation}\label{linear operator}
L-c: \left\{\xi \in C^{2,\alpha}(M): \int_{M}\xi \vp_{0}\chi^{n}=0\right\}\times \mathbb{R}\rightarrow  C^{\alpha}(M).
\end{equation}
For any $\hat{h}\in  C^{\alpha}(M)$, there exists a unique  constant $c$ such that $\int_{M}(\hat{h}+c)\vp_{0}\chi^{n}=0$. Then by \eqref{ker L*} and the Fredholm theorem, there exists $\xi$ such that
$L(\xi)-c=\hat{h}.$ Hence $L-c$ is surjective.
Let $(\xi_{0}, \hat{c})$ be the solution of $L-c=0$. By \eqref{ker L*} and the Fredholm theorem, $ \hat{c}=0$. Using $\eqref{ker L}$ and \eqref{linear operator}, we obtain $\xi_{0}=0.$
Hence, $ L-c$ is injective.
Then by the implicit function theory,  when $|t-t_{0}|$ small enough, there exist $u_{t}$ and a constant $c_{t}$ satisfying
 \begin{equation*}
 \sum_{i}\arctan\lambda_{i}(u_{t})=(1-t)h_{0}+th_{1}+c_{t}.
 \end{equation*}

%


%
%
%
%

\end{proof}

\subsection{Existence}
Suppose $\underline{u}$ is a $\mathcal{C}$-subsolution to the deformed Hermitian-Yang-Mills equation \eqref{DHYM}. Recall $\lambda_{i}(\hat{u})$ are the eigenvalues of $\omega_{\hat{u}}$. 
Denote $\theta_{0}=\sum_{i}\arctan\lambda_{i}(\hat{u}).$
Now we use the continuity method to prove that there exists a solution when the right hand side $h$ of \eqref{DHYM} is a constant.
\begin{prop}\label{existence c1}
Under the assumption of Theorem \ref{constant angle theorem}, there exists  a function $u$ on $M$ and a constant $c$ such that
	\begin{equation*}
\sum_{i}\arctan\lambda_{i}=h+c,
	\end{equation*}
where 	$h+c>\frac{(n-1)\pi}{2}$.
\end{prop}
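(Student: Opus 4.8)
The plan is to solve \eqref{cm1} for all $t\in[0,1]$ by the continuity method, taking $h_0=\theta_0=\sum_i\arctan\lambda_i(\hat u)$ and $h_1=h$. Since $F(A)=\sum_i\arctan\lambda_i(A)=\mathrm{tr}\,\arctan A$ is a smooth symmetric function on Hermitian matrices, $\theta_0$ is a smooth function on $M$; by hypothesis $\theta_0=F(\omega_{\hat u})>\tfrac{(n-1)\pi}{2}$, and since $\hat u$ is a supersolution $\theta_0\le h<\tfrac{n\pi}{2}$, so $\theta_0$ is an admissible right-hand side for \eqref{cm1}. Set $T=\{t\in[0,1]:\eqref{cm1}\text{ admits a solution }(u_t,c_t)\}$. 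After subtracting a constant from $\hat u$ (which changes neither $\omega_{\hat u}$ nor $\theta_0$) we may assume $\hat u$ is normalized, and then $(\hat u,0)$ solves \eqref{cm1} at $t=0$ because $F(\omega_{\hat u})=\theta_0$; hence $0\in T$. Openness of $T$ is exactly Proposition \ref{open}. It remains to prove $T$ is closed, i.e.\ to establish a priori estimates for $(u_t,c_t)$ uniform in $t$.

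The key new difficulty, absent in the K\"ahler setting where $c_t$ is pinned by a cohomological invariant, is to bound the constant $c_t$ and to verify that both the hypercritical phase condition and the admissibility of $\underline u$ persist along the path. I would first bound $c_t$ by the maximum principle applied to $u_t-\hat u$. At a point $x_1$ where $u_t-\hat u$ is maximal one has $\ddbar(u_t-\hat u)(x_1)\le 0$, hence $\omega_{u_t}\le\omega_{\hat u}$ there; since eigenvalues of Hermitian forms are monotone and $\arctan$ is increasing, $F(\omega_{u_t})(x_1)\le F(\omega_{\hat u})(x_1)=\theta_0(x_1)$, which by \eqref{cm1} reads $(1-t)\theta_0(x_1)+th+c_t\le\theta_0(x_1)$, i.e.\ $c_t\le t(\theta_0(x_1)-h)\le 0$. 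Evaluating instead at a minimum point of $u_t-\hat u$ gives $c_t\ge t(\min_M\theta_0-h)$. Therefore $t(\min_M\theta_0-h)\le c_t\le 0$, uniformly in $t$.

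With this, write $h_t=(1-t)\theta_0+th+c_t$. From $\theta_0\le h$ and $c_t\le 0$ we get $h_t\le h$, so the criterion of Lemma \ref{subsolution}, namely $\sum_{i\ne j}\arctan\lambda_i(\underline u)>h-\tfrac{\pi}{2}\ge h_t-\tfrac{\pi}{2}$, shows that $\underline u$ is still a $\mathcal{C}$-subsolution for the $t$-equation. From $c_t\ge t(\min_M\theta_0-h)$ we get $h_t\ge(1-t)\theta_0+t\min_M\theta_0\ge\min_M\theta_0>\tfrac{(n-1)\pi}{2}$. Hence $h_t$ takes values in the fixed compact subinterval $[\min_M\theta_0,\,h]\subset\big((n-1)\tfrac{\pi}{2},\tfrac{n\pi}{2}\big)$, so the hypercritical phase condition holds, $\inf_M h_t\ge\min_M\theta_0$ uniformly, and $\{h_t\}_{t\in[0,1]}$ is a smooth family with $|c_t|\le C$, whence $\|h_t\|_{C^{k+1}(M)}\le C$ uniformly. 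Thus Theorem \ref{main theorem} applies with a constant independent of $t$: after normalizing $u_t$ by an additive constant so that $\sup_M(u_t-\underline u)=0$ (as in Proposition \ref{Prop3.2}), which leaves $c_t$ unchanged, we obtain $\|u_t\|_{C^{k,\beta}(M)}+|c_t|\le C$ uniformly. Arzel\`a--Ascoli then yields closedness of $T$, hence $T=[0,1]$; taking $t=1$ produces $(u,c)=(u_1,c_1)$ with $\sum_i\arctan\lambda_i=h+c$, and $c_1\ge\min_M\theta_0-h$ forces $h+c\ge\min_M\theta_0>\tfrac{(n-1)\pi}{2}$.

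The main obstacle is the middle step: controlling $c_t$ and deducing from it the uniform preservation of the hypercritical phase and of the $\mathcal{C}$-subsolution property. Once these structural facts are secured, the a priori estimates of Theorem \ref{main theorem}, together with the $C^{2,\alpha}$ theory and bootstrapping already used there, give the closedness with no further work.
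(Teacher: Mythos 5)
Your proof is correct and follows essentially the same route as the paper: the continuity path $(1-t)\theta_0+th+c_t$ starting at $(\hat u,0)$, openness from Proposition \ref{open}, the maximum-principle bounds $t(\inf_M\theta_0-h)\le c_t\le 0$ obtained by comparing $u_t$ with $\hat u$ at extremal points, and then using $c_t\le 0$ to preserve the $\mathcal{C}$-subsolution criterion of Lemma \ref{subsolution} and $c_t\ge t(\inf_M\theta_0-h)$ to keep $h_t\ge\inf_M\theta_0>(n-1)\pi/2$, so that Theorem \ref{main theorem} applies uniformly. Your presentation is slightly more explicit about the uniform $C^{k+1}$ bound on $h_t$ and the normalization needed before invoking Arzel\`a--Ascoli, but there is no substantive difference from the paper's argument.
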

\begin{proof}
	Consider the family of equations
\begin{equation}\label{c1}
\sum_{i}\arctan\lambda_{i}(u_{t})=(1-t)\theta_{0}+th+c_{t}.
\end{equation}
Define \[I=\{t\in[0,1]: \text{\ there exist\ } (u_{t}, c_{t})\in \tilde{B} \text{\ solving \ } \eqref{c1}\}.\]
Note $(\hat{u},0)$ is the solution of \eqref{c1} at $t=0$. Then $I$ is non-empty. By Proposition \ref{open}, $I$ is open.
To prove $I$ is closed, by \eqref{main theorem}, it suffices to prove $\underline{u}$ is still a $\mathcal{C}$-subsolution of \eqref{c1} for any $t\in[0,1]$ and
\begin{equation}\label{closed condition}
(1-t)\theta_{0}+th+c_{t}\geq\inf_{M} \theta_{0}>(n-1)\frac{\pi}{2}.
\end{equation}
First, assume $u_{t}-\hat{u}$ achieves its maximum at the point $q$. Then
\[\ddbar(u_{t}-\hat{u})(q)\leq0.\]
It follows that
\begin{equation*}
F(\omega_{u_{t}})(q)-F(\omega_{\hat{u}})(q)=\int_{0}^{1}F^{i\ol{j}}(\omega_{\hat{u}}+s\ddbar(u_{t}-\hat{u}))\ ds\, (u_{t}-\hat{u})_{i\ol{j}}(q)\leq0.
\end{equation*}
Then at $q$,
\begin{equation*}
\theta_{0}(q)\geq \sum_{i}\arctan\lambda_{i}(u_{t})(q)=(1-t)\theta_{0}(q)+th+c_{t},
\end{equation*}
which implies
\begin{equation}\label{upper bound constant}
c_{t}\leq t(\theta_{0}(q)-h)\leq 0.
\end{equation}
Here we used  $h\geq \theta_{0}.$
Then, we have
\begin{equation*}
\begin{split}
\sum_{i\neq j}\arctan(\lambda_{i}(\underline{u}))&\geq h-\frac{\pi}{2}\\
                                  &\geq(1-t)\theta_{0}+th+c_{t}-\frac{\pi}{2} .
\end{split}
\end{equation*}
By Lemma \ref{subsolution}, $\underline{u}$ is a $\mathcal{C}$-subsolution for $t\in[0,1]$.
Assume $u_{t}-\hat{u}$ achieves its minimum at the point $q'$.
Similarly with \eqref{upper bound constant}, we have
 \begin{equation}
c_{t}\geq- t(h-\theta_{0}(q'))\geq -\sup_{M} t(h-\theta_{0}).
\end{equation}
Assume $\theta_{0}$ achieves its minimum at the point $p$. Note that $h$ is a constant.
Then, we obtain
\begin{equation}\label{hyper c}
\begin{split}
\inf_{M}\left((1-t)\theta_{0}+th+c_{t}\right)&=(1-t)\theta_{0}(p)+th+c_{t}\\
&=\theta_{0}(p)+t(h-\theta_{0}(p))+c_{t}\\
&=\theta_{0}(p)+t\sup_{M}(h-\theta_{0})+c_{t}\\
&\geq\theta_{0}(p)= \inf_{M}\theta_{0}>(n-1)\frac{\pi}{2}.
\end{split}
\end{equation}
By Theorem \ref{main theorem}, we conclude $I$ is closed.
\end{proof}

\bigskip

\end{document}